\theoremstyle{plain}
\newtheorem{thm}{\protect\theoremname}
  \theoremstyle{definition}
  \newtheorem{defn}[thm]{\protect\definitionname}
  \theoremstyle{plain}
  \newtheorem{lem}[thm]{\protect\lemmaname}
  \theoremstyle{plain}
  \newtheorem{prop}[thm]{\protect\propositionname}
  \theoremstyle{plain}
  \newtheorem{cor}[thm]{\protect\corollaryname}
  \theoremstyle{remark}
  \newtheorem{rem}[thm]{\protect\remarkname}
\theoremstyle{definition}
\newtheorem{assume}{Assumption}
\newtheorem{constr}{Construction}
  \providecommand{\corollaryname}{Corollary}
  \providecommand{\definitionname}{Definition}
  \providecommand{\lemmaname}{Lemma}
  \providecommand{\propositionname}{Proposition}
  \providecommand{\remarkname}{Remark}
\providecommand{\theoremname}{Theorem}
\begin{document}

\title[Parameter Insensitivity in ADMM Preconditioner]{Parameter Insensitivity in ADMM-Preconditioned Solution of Saddle-Point
Problems}

\author{Richard Y. Zhang}

\author{Jacob K. White}

\thanks{Department of Electrical Engineering and Computer Science, Massachusetts
Institute of Technology, Cambridge, MA 02139-4307. Email: \texttt{ryz@mit.edu}
and \texttt{white@mit.edu}. Financial support for this work was provided
in part by the Skolkovo-MIT initiative in Computational Mathematics.}
\begin{abstract}
We consider the solution of linear saddle-point problems, using the
alternating direction method-of-multipliers (ADMM) as a preconditioner
for the generalized minimum residual method (GMRES). We show, using
theoretical bounds and empirical results, that ADMM is made remarkably
insensitive to the parameter choice with Krylov subspace acceleration.
We prove that ADMM-GMRES can consistently converge, irrespective of
the exact parameter choice, to an $\epsilon$-accurate solution of
a $\kappa$-conditioned problem in $O(\kappa^{2/3}\log\epsilon^{-1})$
iterations. The accelerated method is applied to randomly generated
problems, as well as the Newton direction computation for the interior-point
solution of semidefinite programs in the SDPLIB test suite. The empirical
results confirm this parameter insensitivity, and suggest a slightly
improved iteration bound of $O(\sqrt{\kappa}\log\epsilon^{-1})$.
\end{abstract}

\maketitle

\section{Introduction}

\global\long\def\R{\mathbb{R}}
\global\long\def\C{\mathbb{C}}
\global\long\def\S{\mathbb{S}}
\global\long\def\P{\mathbb{P}}
\global\long\def\K{\mathcal{K}}
\global\long\def\L{\mathscr{L}}
\global\long\def\blkdiag{\mathrm{blkdiag}}
\global\long\def\vec{\mathrm{vec}}
\global\long\def\mat{\mathrm{mat}}
\global\long\def\tr{\mathrm{tr}\,}
\global\long\def\AD{\mathrm{AD}}
\global\long\def\GM{\mathrm{GM}}
\global\long\def\OR{\mathrm{OR}}
\global\long\def\gs{\mathrm{GS}}
\global\long\def\nx{n_{x}}
\global\long\def\nz{n_{z}}
\global\long\def\ny{n_{y}}
\global\long\def\lb{\mathrm{lb}}
\global\long\def\prox{\mathrm{prox}}
We consider iteratively solving very large scale instances of the
saddle-point problem
\begin{equation}
\begin{bmatrix}D & 0 & A^{T}\\
0 & 0 & B^{T}\\
A & B & 0
\end{bmatrix}\begin{bmatrix}x\\
z\\
y
\end{bmatrix}=\begin{bmatrix}r_{x}\\
r_{z}\\
r_{y}
\end{bmatrix}\qquad\Leftrightarrow\qquad Mu=r,\label{eq:basic_KKT}
\end{equation}
with data matrices $A\in\R^{\ny\times\nx}$ with $AA^{T}$ invertible,
$B\in\R^{\ny\times\nz}$ with $B^{T}B$ invertible, symmetric positive
definite $D\in\R^{\ny\times\ny}$, and data vectors $r_{x}\in\R^{\nx}$,
$r_{z}\in\R^{\nz}$, $r_{y}\in\R^{\ny}$. Note that the special case
of $A=I$ reduces (\ref{eq:basic_KKT}) to the familiar block $2\times2$
saddle-point structure
\[
\begin{bmatrix}D^{-1} & B\\
B^{T} & 0
\end{bmatrix}\begin{bmatrix}-y\\
z
\end{bmatrix}=\begin{bmatrix}r_{y}-D^{-1}r_{x}\\
-r_{z}
\end{bmatrix}.
\]
Additionally, we assume that efficient solutions (i.e. black-box oracles)
to the two subproblems 
\begin{equation}
\begin{bmatrix}D & A^{T}\\
A & -\beta^{-1}I
\end{bmatrix}\begin{bmatrix}\tilde{x}\\
\tilde{y}
\end{bmatrix}=\begin{bmatrix}\tilde{r}_{x}\\
\tilde{r}_{y}
\end{bmatrix},\qquad\begin{bmatrix}0 & B^{T}\\
B & -\beta^{-1}I
\end{bmatrix}\begin{bmatrix}\tilde{z}\\
\tilde{y}
\end{bmatrix}=\begin{bmatrix}\tilde{r}_{z}\\
\tilde{r}_{y}
\end{bmatrix},\label{eq:oracles}
\end{equation}
are available for a fixed choice of $\beta>0$. 

Saddle-point problems with this structure arise in numerous settings,
ranging from nonlinear optimization to the numerical solution of partial
differential equations (PDEs); the subproblems (\ref{eq:oracles})
are often solved with great efficiency by exploiting application-specific
features. For example, when the data matrices are large-and-sparse,
the smaller saddle-point problems (\ref{eq:oracles}) can admit highly
sparse factorizations, based on nested dissection or minimum degree
orderings~\cite{vanderbei1993symmetric,vanderbei1995symmetric}.
Also, the Schur complements $D+\beta A^{T}A$ and $B^{T}B$ are symmetric
positive definite, and can often be interpreted as discretized Laplacian
operators, for which many fast solvers are available~\cite{trottenberg2000multigrid,stuben2001review,vishnoi2012laplacian,spielman2014nearly}.
In some special cases, a triangular factorization or a diagonalization
may be available analytically~\cite{toh2004solving}. The reader
is referred to~\cite{benzi2005numerical} for a more comprehensive
review of possible applications.

The problem structure has an interpretation of establishing \emph{consensus}
between the two subproblems. To see this, note that (\ref{eq:basic_KKT})
is the Karush--Kuhn--Tucker (KKT) optimality condition associated
with the equality-constrained least-squares problem
\begin{alignat}{2}
 & \underset{x,z}{\text{minimize }}\qquad & \frac{1}{2}x^{T}Dx-r_{x}^{T}x-r_{z}^{T}z,\label{eq:consen2}\\
 & \text{subject to } & Ax+Bz=r_{y},\nonumber 
\end{alignat}
and the solution to (\ref{eq:basic_KKT}) is the unique optimal point.
This problem is easy to solve if one of two variables were held fixed.
For instance, holding $z$ fixed, the minimization of (\ref{eq:consen2})
over $x$ to $\epsilon$-accuracy can be made with just a single call
to the first subproblem in (\ref{eq:oracles}), taking the parameter
to be $\beta\in O(\epsilon^{-1})$. The difficulty of the overall
problem, then, lies entirely in the need for consensus, i.e. for two
independent minimizations to simultaneously satisfy a single equality
constraint.

The alternating direction method-of-multipliers (ADMM) is a popular
first-order method widely used in signal processing, machine learning,
and related fields, to solve consensus problems like the one posed
in (\ref{eq:consen2}); cf.~\cite{boyd2011distributed} for an extensive
review. Each ADMM iteration calls the subproblems in (\ref{eq:oracles}),
with $\beta$ serving as the step-size parameter for the underlying
gradient ascent. Under the assumptions on the data matrices stated
at the start of the paper, ADMM converges at a linear rate (with error
scaling $O(e^{-k})$ at the $k$-th iteration), starting from any
initial point.

The choice of the parameter $\beta$ heavily influences the effectiveness
of ADMM. Using an optimal choice~\cite{giselsson2014diagonal,nishihara2015general,francca2015explicit,ghadimi2015optimal},
the method is guaranteed converge to an $\epsilon$-accurate solution
in
\begin{equation}
O(\sqrt{\kappa}\log\epsilon^{-1})\text{ iterations},\label{eq:optim_iter_cnt}
\end{equation}
where $\kappa$ is the condition number associated with the rescaled
matrix $\tilde{D}=(AD^{-1}A^{T})^{-1}$. This bound is asymptotically
optimal, in the sense that the square-root factor cannot be improved~\cite[Thm. 2.1.13]{nesterov2004introductory}. 

Unfortunately, explicitly estimating the optimal parameter choice
can be challenging. Picking any arbitrarily value, say $\beta=1$,
often results in convergence that is so slow as to be essentially
stagnant, even on well-conditioned problems~\cite{ghadimi2015optimal,nishihara2015general}.
A heuristic that works well in practice is to adjust the parameter
after each iteration, using a rule-of-thumb based on keeping the primal
and dual residuals within the same order of magnitude~\cite{he2000alternating,wang2001decomposition,boyd2011distributed}.
However, varying the value of $\beta$ between iterations can substantially
increase the cost of solving the subproblems in (\ref{eq:oracles}).

\subsection{Main results}

\begin{figure}
\begin{centering}
\subfloat[]{\begin{centering}
\includegraphics[width=0.45\columnwidth]{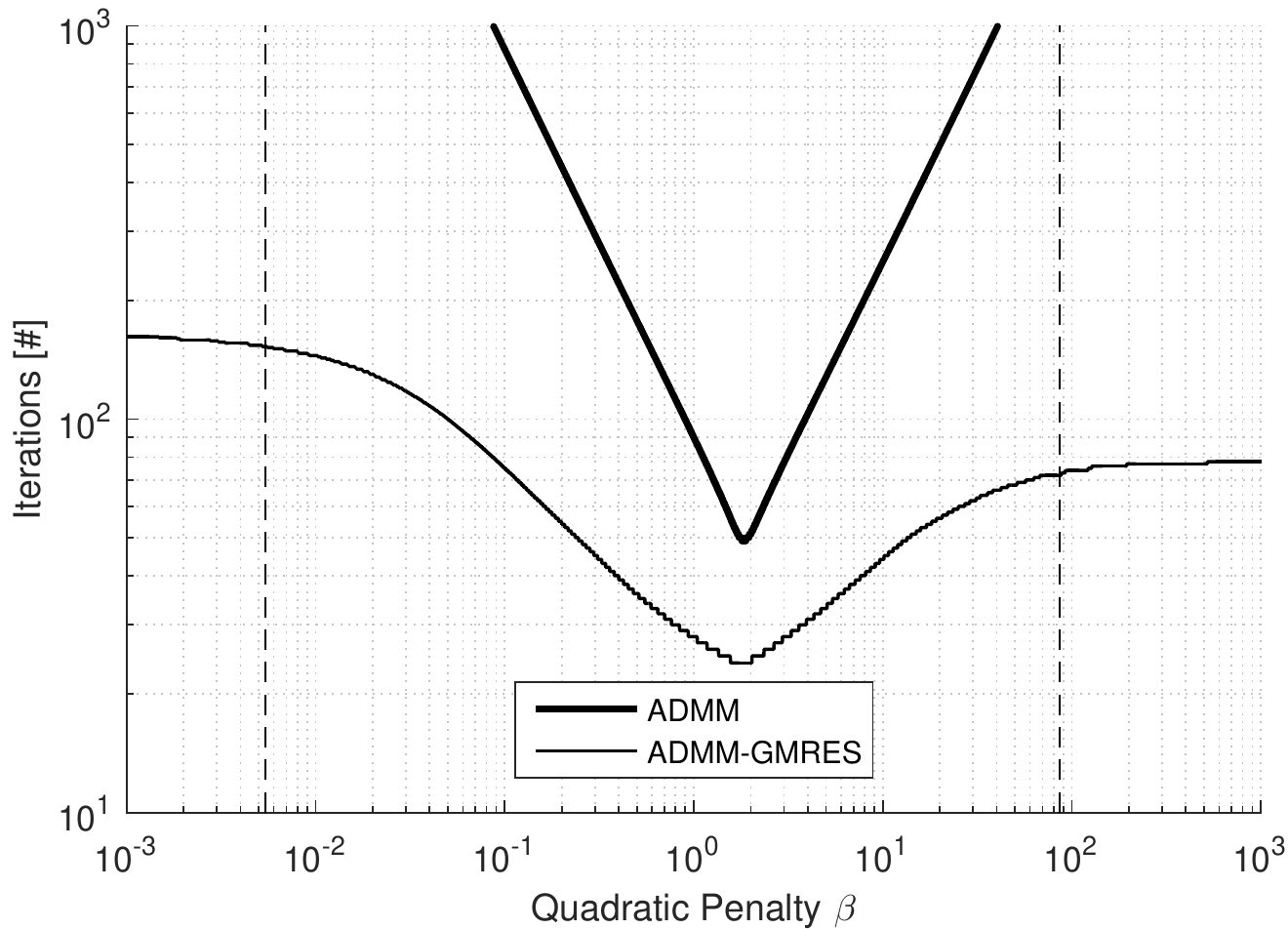}
\par\end{centering}

}\subfloat[]{\begin{centering}
\includegraphics[width=0.45\columnwidth]{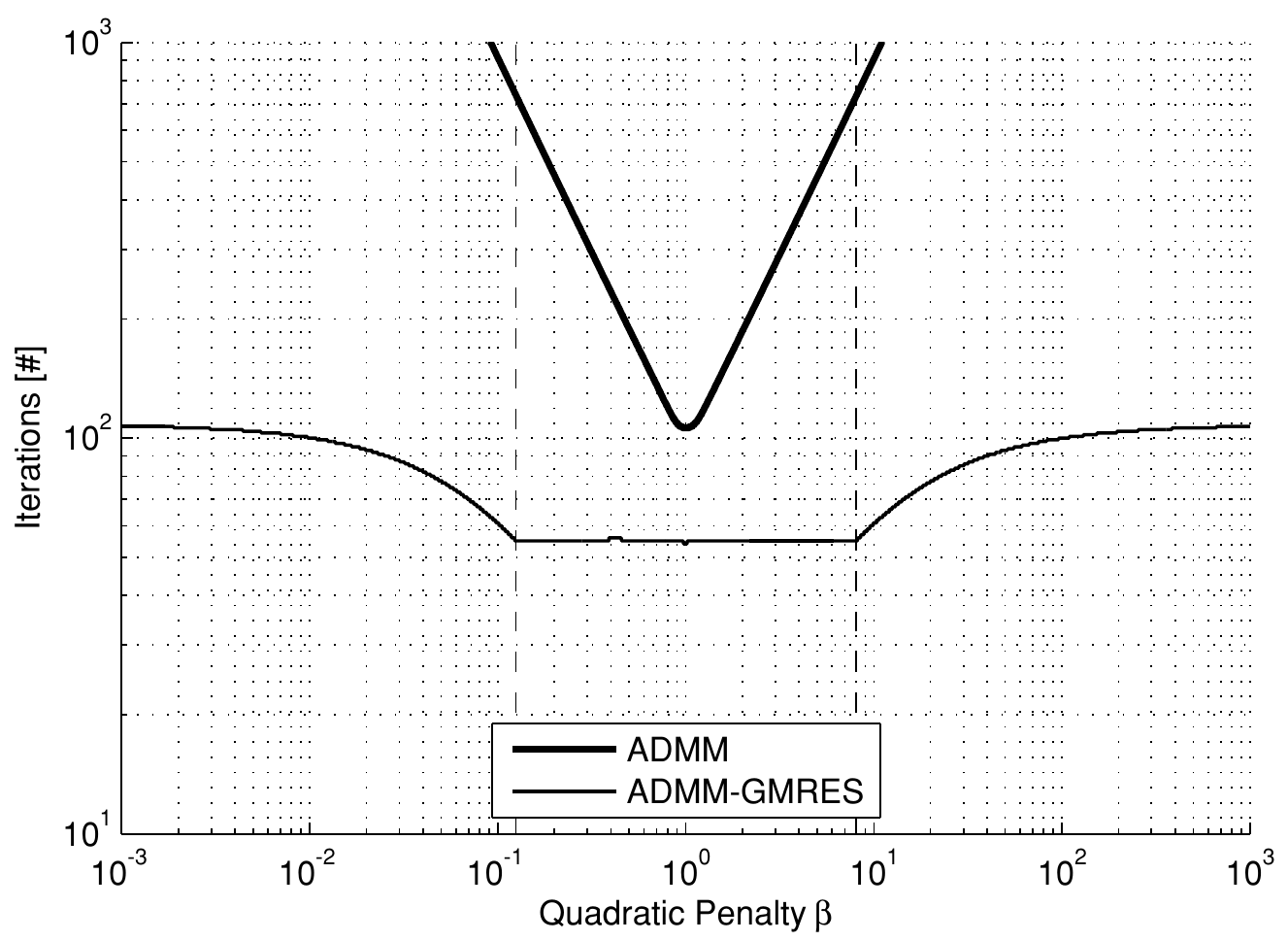}
\par\end{centering}

}
\par\end{centering}

\protect\caption{\label{fig:iter_comparison}Convergence of GMRES-accelerated ADMM
and regular ADMM with varying $\beta$, and error convergence tolerance
$\epsilon=10^{-6}$: (a) randomly generated problem with dimensions
$\protect\ny=10^{3}$, $\protect\nx=2\times10^{3}$, $\protect\nz=300$.
The vertical lines mark $m=5.4\times10^{-3}$ and $\ell=86$ for $\sqrt{m\ell}=0.68$
and $\kappa=\ell/m=1.6\times10^{4}$; (b) Construction~2 from~\cite[Sec 6.1]{zhang2016on},
with $\protect\ny=\protect\nx=10^{3}$, $\protect\nz=500$, $\ell=8$,
$m=0.125$, $\sqrt{m\ell}=1$, and $\kappa=64$.}
\end{figure}
When applied to a least-squares problem, ADMM reduces to a classic
block Gauss-Seidel splitting on the corresponding KKT equations, i.e.
the original saddle-point problem in (\ref{eq:basic_KKT}). Viewing
ADMM as the resulting linear fixed-point iterations, convergence can
be \emph{optimally} accelerated by using a Krylov subspace method
like generalized minimum residual (GMRES)~\cite{saad1986gmres,saad2003iterative}.
Or equivalently, viewing ADMM as a \emph{preconditioner}, it may be
used to improve the conditioning of the KKT equations for a Krylov
subspace method like GMRES. We refer to the GMRES-accelerated version
of ADMM (or the ADMM-preconditioned GMRES) as ADMM-GMRES.

In this paper, we show, using theoretical bounds and empirical results,
that ADMM-GMRES (nearly) achieves the optimal convergence rate in
(\ref{eq:optim_iter_cnt}) for every parameter choice. Figure~\ref{fig:iter_comparison}
makes this comparison for two representative problems. Our first main
result (Theorem~\ref{thm:real_eig}) conclusively establishes the
optimal iteration bound when $\beta$ is very large or very small.
Our second main result (Theorem~\ref{thm:cmp_eig}) proves a slightly
weaker statement: ADMM-GMRES converges within $O(\kappa^{2/3}\log\epsilon^{-1})$
iterations for all remaining choices of $\beta$, subject to a certain
normality assumption. The two bounds gives us the confidence to select
the parameter choice $\beta$ in order to maximize numerical stability. 

To validate these results, we benchmark the performance of ADMM-GMRES
with a randomly selected $\beta$ in Section~\ref{sec:AppEx} against
regular ADMM with an optimally selected $\beta$. Two problem classes
are considered: (1) random problems generated by selecting random
orthonormal bases and singular values; and (2) the Newton direction
subproblems associated with the interior-point solution of large-scale
semidefinite programs. Our numerical results suggest that ADMM-GMRES
converges in $O(\sqrt{\kappa}\log\epsilon^{-1})$ iterations for all
values of $\beta$, which is a slightly stronger iteration bound that
the one we have proved.

\subsection{Related ideas}

When the optimal parameter choice $\beta$ for (regular) ADMM is explicitly
available, we showed in a previous paper~\cite{zhang2016on} that
ADMM-GMRES can consistently converge in just $O(\kappa^{1/4}\log\epsilon^{-1})$
iterations, which is an entire order of magnitude better than the
optimal bound (\ref{eq:optim_iter_cnt}). Problems that would otherwise
require thousands of iterations to solve using ADMM are reduced to
just tens of ADMM-GMRES iterations. However, the improved rate cannot
be guaranteed over all problems, and there exist problem classes where
ADMM-GMRES convergences in $\Omega(\sqrt{\kappa}\log\epsilon^{-1})$
iterations for all choices of $\beta$.

More generally, the idea of using a preconditioned Krylov subspace
method to solve a saddle-point system has been explored in-depth by
a number of previous authors~\cite{battermann1998preconditioners,toh2004solving,oliveira2005new,bai2003hermitian,benzi2004preconditioner,benzi2005numerical}.
We make special mention of the Hemitian / Skew-Hermitian (HSS) splitting
method, first proposed by Bai, Golub \& Ng~\cite{bai2003hermitian}
and used as a preconditioner for saddle-point problems by Benzi \&
Golub~\cite{benzi2004preconditioner}, which also makes use of efficient
solutions to the subproblems in (\ref{eq:oracles}). It is curious
to note that HSS has a strikingly similar expression for its optimal
parameter choice and the resulting convergence rate, suggesting that
the two methods may be closely related.

Note that ADMM is an entirely distinct approach from the augmented
Lagrangian / method of multipliers (MM) in optimization, or equivalently,
the Uzawa method in saddle-point problems~\cite{elman1994inexact,bramble1997analysis}.
In MM, convergence is guaranteed in a small, constant number of iterations,
but each step requires the solution of an ill-conditioned symmetric
positive definite system of equations, often via preconditioned conjugate
gradients. In ADMM, convergence is slowly achieved over a large number
of iterations, but each iteration is relatively inexpensive. We refer
the reader to~\cite{boyd2011distributed} for a more detailed comparison
of the two methods.

Finally, it remains unknown whether these benefits extend to nonlinear
saddle-point problems (or equivalently, nonlinear versions of the
consensus problem), where the ADMM update equations are also nonlinear.
There are a number of competiting approaches to generalize GMRES to
nonlinear fixed-point iterations~\cite{saad1993flexible,brown1994convergence,fang2009two}.
Their application to ADMM is the subject of future work.

\subsection{Definitions \& Notation}

Given a matrix $M$, we use $\lambda_{i}(M)$ to refer to its $i$-th
eigenvalue, and $\Lambda\{M\}$ to denote its set of eigenvalues,
including multiplicities. If the eigenvalues are purely-real, then
$\lambda_{\max}(M)$ refers to its most positive eigenvalue, and $\lambda_{\min}(M)$
its most negative eigenvalue. Let $\|\cdot\|$ denote the $l_{2}$
vector norm, as well as the associated induced norm, also known as
the spectral norm. We use $\sigma_{i}(M)$ to refer to the $i$-th
largest singular value.

Define $m=\lambda_{\min}(\tilde{D})$ and $\ell=\lambda_{\max}(\tilde{D})$
as the strong convexity parameter and the gradient Lipschitz constant
for the quadratic form associated with the matrix $\tilde{D}=(AD^{-1}A^{T})^{-1}$.
The quantity $\kappa=\ell/m$ is the corresponding condition number.

\section{\label{sec:Preliminaries}Application of ADMM to the Saddle-point
Problem}

Beginning with a choice of the quadratic-penalty / step-size parameter
$\beta>0$ and initial points $\{x^{(0)},z^{(0)},y^{(0)}\}$, the
method generates iterates
\begin{align*}
\text{Local var. update: }x^{(k+1)} & =\arg\min_{x}\frac{1}{2}x^{T}Dx-r_{x}^{T}x+\frac{\beta}{2}\|Ax+Bz^{(k)}-c+\frac{1}{\beta}y^{(k)}\|^{2},\\
\text{Global var. update: }z^{(k+1)} & =\arg\min_{z}-r_{z}^{T}z+\frac{\beta}{2}\|Ax^{(k+1)}+Bz-c+\frac{1}{\beta}y^{(k)}\|^{2},\\
\text{Multiplier update: }y^{(k+1)} & =y^{(k)}+\beta(Ax^{(k+1)}+Bz^{(k+1)}-c).
\end{align*}
Note that the local and global variable updates can each be implemented
by calling one of the two subproblems in (\ref{eq:oracles}). Since
the KKT optimality conditions are linear with respect to the decision
variables, the update equations are also linear, and can be written
\begin{equation}
u^{(k+1)}=G_{\AD}(\beta)u^{(k)}+b(\beta),\label{eq:admm_update}
\end{equation}
with iteration matrix
\begin{equation}
G_{\AD}(\beta)=\begin{bmatrix}D+\beta A^{T}A & 0 & 0\\
\beta B^{T}A & \beta B^{T}B & 0\\
A & B & -\frac{1}{\beta}I
\end{bmatrix}^{-1}\begin{bmatrix}0 & -\beta A^{T}B & -A^{T}\\
0 & 0 & -B^{T}\\
0 & 0 & -\frac{1}{\beta}I
\end{bmatrix},\label{eq:ADMM_Gain}
\end{equation}
upon the vector of local, global, and multiplier variables, $u^{(k)}=[x^{(k)};z^{(k)};y^{(k)}]$.
We will refer to the residual norm in all discussions relating to
convergence.
\begin{defn}[Residual convergence]
\label{def:res_conv}Given the initial and final iterates $u^{(0)}=[x^{(0)};z^{(0)};y^{(0)}]$
and $u^{(k)}=[x^{(k)},z^{(k)},y^{(k)}]$, we say $\epsilon$ residual
convergence is achieved in $k$ iterations if $\|Mu^{(k)}-r\|\le\epsilon\|Mu^{(k)}-r\|$,
where $M$ and $r$ are the KKT matrix and vector in (\ref{eq:basic_KKT}).
\end{defn}

\subsection{Basic Spectral Properties}

Convergence analysis for linear fixed-point iterations is normally
performed by examining the spectral properties of the corresponding
iteration matrix. Using dual feasibility arguments, a block-Schur
decomposition for (\ref{eq:ADMM_Gain}) can be explicitly specified.
\begin{lem}[{\cite[ Lem. 11]{zhang2016on}}]
\label{lem:blkschur_ad}Define the QR decomposition $B=QR$ with
$Q\in\R^{\ny\times\nz}$ and $R\in\R^{\nz\times\nz}$, and define
$P\in\R^{p\times(\ny-\nz)}$ as its orthogonal complement. Then defining
the orthogonal matrix $U$ and the scaling matrix $S(\beta)$,
\begin{equation}
U=\left[\begin{array}{c|cc|c}
I_{\nx} & 0 & 0 & 0\\
0 & I_{\nz} & 0 & 0\\
0 & 0 & P & Q
\end{array}\right],\qquad S(\beta)=\left[\begin{array}{c|cc|c}
\beta I_{\nx} & 0 & 0 & 0\\
\hline 0 & \beta R & 0 & 0\\
0 & 0 & I_{\ny-\nz} & 0\\
\hline 0 & 0 & 0 & I_{\nz}
\end{array}\right]\label{eq:schur_basis_mm}
\end{equation}
yields a block-Schur decomposition of $G_{\AD}(\beta)$
\begin{equation}
U^{T}G_{\AD}(\beta)U=S^{-1}(\beta)\left[\begin{array}{c|c|c}
0_{\nx} & G_{12}(\beta) & G_{13}(\beta)\\
\hline 0 & G_{22}(\beta) & G_{23}(\beta)\\
\hline 0 & 0 & 0_{\nz}
\end{array}\right]S(\beta),\label{eq:AD_schur_blks}
\end{equation}
where the size $\ny\times\ny$ inner iteration matrix $G_{22}(\beta)=\frac{1}{2}I+\frac{1}{2}K(\beta)$
is defined in terms of the matrix
\begin{equation}
K(\beta)=\begin{bmatrix}Q^{T}\\
-P^{T}
\end{bmatrix}[(\beta^{-1}\tilde{D}+I)^{-1}-(\beta\tilde{D}^{-1}+I)^{-1}]\begin{bmatrix}Q & P\end{bmatrix},\label{eq:Kdef}
\end{equation}
and $\tilde{D}=(AD^{-1}A^{T})^{-1}$.
\end{lem}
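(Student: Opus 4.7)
The plan is to work from the factorization $G_{\AD}(\beta) = M_1^{-1} M_2$, where $M_1$ and $M_2$ denote the two matrices whose product appears in~(\ref{eq:ADMM_Gain}). Because $U$ is orthogonal, $U^T G_{\AD}(\beta) U = (U^T M_1 U)^{-1}(U^T M_2 U)$, so the task reduces to transforming each factor individually and recombining. Substituting $B = QR$ together with the orthonormality relations $Q^T Q = I_{\nz}$, $P^T P = I_{\ny-\nz}$, $P^T Q = 0$ and expanding both $U^T M_1 U$ and $U^T M_2 U$ in the $4 \times 4$ partition $(\nx, \nz, \ny-\nz, \nz)$, then re-grouping the middle two blocks into a single block of size $\ny$, yields the $3 \times 3$ partition $(\nx, \ny, \nz)$ of the statement; by direct computation, $U^T M_1 U$ is block lower triangular and $U^T M_2 U$ is block upper triangular in this partition.

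The second step is to block-invert $U^T M_1 U$ using the standard block-lower-triangular inversion formula and multiply against $U^T M_2 U$. The leftmost block-column of the product vanishes trivially because the first block-column of $M_2$ is identically zero. The bottom block-row vanishes through a nontrivial cancellation rooted in $R^{-T} R^T = I$: the (3,1) entry of $(U^T M_1 U)^{-1}$ reduces to zero after expanding the block-inversion formula, and analogous cancellations force the (3,2) and (3,3) blocks of the product to vanish as well. This establishes the claimed block-upper-triangular shape with corner blocks $0_{\nx}$ and $0_{\nz}$.

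The final step extracts the (2,2) inner block and conjugates it by the middle diagonal of $S(\beta)$, which is $\blkdiag(\beta R, I_{\ny-\nz})$. After the scaling absorbs the powers of $\beta$ and $R$, the inner block reduces to $\blkdiag(0, I_{\ny-\nz}) + \beta \begin{bmatrix} Q^T \\ -P^T \end{bmatrix} H \begin{bmatrix} Q & P \end{bmatrix}$ with $H = A(D + \beta A^T A)^{-1} A^T$. Right-multiplying $\beta H$ by $\beta^{-1} I + AD^{-1} A^T$ and telescoping the factor $D + \beta A^T A$ (equivalent to Sherman--Morrison--Woodbury) gives the key identity $\beta H = (\beta^{-1}\tilde D + I)^{-1}$. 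Applying the elementary identity $(\tau + I)^{-1} + (\tau^{-1} + I)^{-1} = I$ with $\tau = \beta^{-1}\tilde D$ then rewrites this non-symmetric expression as the symmetric combination $\tfrac{1}{2} I + \tfrac{1}{2} K(\beta)$, with $K(\beta)$ as in~(\ref{eq:Kdef}).

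The main technical obstacle is the bookkeeping in the block-inversion step, especially verifying that the entire bottom block-row of the product vanishes. The remaining algebra (the Woodbury-type identification $\beta H = (\beta^{-1}\tilde D + I)^{-1}$ and the symmetrization step) is routine, but the final $(\tau + I)^{-1} + (\tau^{-1} + I)^{-1} = I$ manipulation is what converts the naturally non-symmetric expression for the inner block into the symmetric form recorded in the statement. The explicit forms of $G_{12}(\beta)$, $G_{13}(\beta)$, $G_{23}(\beta)$ can be read off during the same multiplication but are not needed for the subsequent spectral analysis.
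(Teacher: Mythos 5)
This lemma is imported verbatim from the authors' prior paper~\cite[Lem.~11]{zhang2016on} and is \emph{not} proved in the present manuscript, so there is no in-paper proof to compare your argument against. What you have written is a correct direct verification, and the key steps all check out. The factorization $G_{\AD}(\beta)=M_1^{-1}M_2$ with $U^T M_1 U$ block lower-triangular and $U^T M_2 U$ block upper-triangular in the regrouped $(\nx,\ny,\nz)$ partition is correct, as is the claim that the bottom block-row of $(U^T M_1 U)^{-1}$ is $[\,0,\ [R^{-T}\ \ 0],\ -\beta I\,]$: the $(3,1)$ block of the inverse is $L_{33}^{-1}\bigl(-L_{31}+L_{32}L_{22}^{-1}L_{21}\bigr)L_{11}^{-1}$, and $L_{32}L_{22}^{-1}L_{21}=R(\beta R^T R)^{-1}\beta R^T Q^T A=Q^T A=L_{31}$, which is exactly the $R^{-T}R^T=I$ cancellation you describe. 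Pairing this row with the strictly upper-triangular structure of $U^T M_2 U$ (whose $(2,2)$ block is $\blkdiag(0,-\beta^{-1}I)$ and whose $(3,3)$ block is $-\beta^{-1}I$) and the $-R^{-T}R^T+I=0$ cancellation in the $(3,3)$ product entry indeed kills the entire bottom block-row. For the inner block, your Woodbury-type identity $\beta A(D+\beta A^T A)^{-1}A^T=(\beta^{-1}\tilde D+I)^{-1}$ and the observation $\blkdiag(0,I_{\ny-\nz})=\tfrac12 I-\tfrac12\bigl[\begin{smallmatrix}Q^T\\-P^T\end{smallmatrix}\bigr]\bigl[\begin{smallmatrix}Q&P\end{smallmatrix}\bigr]$, combined with $(\tau+I)^{-1}+(\tau^{-1}+I)^{-1}=I$, do recover $G_{22}=\tfrac12 I+\tfrac12 K(\beta)$ with $K$ as in~(\ref{eq:Kdef}). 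The proof plan is sound and I found no gap.
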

We may immediately conclude that $G_{\AD}(\beta)$ has $\nx+\nz$
zero eigenvalues, and $\ny$ nonzero eigenvalues the lie within a
disk on the complex plane centered at $+\frac{1}{2}$, with radius
of $\frac{1}{2}\|K(\beta)\|$. It is straightforward to compute the
radius of this disk exactly.
\begin{lem}
\label{lem:Knrm}Let $\tilde{D}=(AD^{-1}A^{T})^{-1}$, and define
$m=\lambda_{\min}(\tilde{D})$ and $\ell=\lambda_{\max}(\tilde{D})$.
Then the spectral norm of $K(\beta)$ is given
\begin{equation}
\|K(\beta)\|=\frac{\gamma-1}{\gamma+1},\text{ where }\gamma=\max\left\{ \frac{\beta}{m},\frac{\ell}{\beta}\right\} .\label{eq:ad_sprd_bnd_a}
\end{equation}

\end{lem}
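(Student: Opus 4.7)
The plan is to reduce $K(\beta)$ to a scalar maximization problem by exploiting two structural features: the orthogonality of the outer factors and the symmetry of $\tilde D$. First I would observe that, since $Q\in\R^{\ny\times\nz}$ comes from the QR factorization of $B$ and $P\in\R^{\ny\times(\ny-\nz)}$ is its orthogonal complement, the block matrix $[Q\ \ P]$ is a full $\ny\times\ny$ orthogonal matrix; the same is true of the signed row-stacking $\bigl[\begin{smallmatrix}Q^T\\ -P^T\end{smallmatrix}\bigr]$, since flipping the sign of a block of orthonormal rows preserves orthonormality. Because the spectral norm is invariant under multiplication by orthogonal matrices on either side,
\begin{equation*}
\|K(\beta)\| \;=\; \bigl\|\,(\beta^{-1}\tilde D+I)^{-1}-(\beta\tilde D^{-1}+I)^{-1}\,\bigr\|.
\end{equation*}

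Next, since $\tilde D=(AD^{-1}A^T)^{-1}$ is symmetric positive definite, I would apply an orthogonal eigendecomposition $\tilde D=V\Lambda V^T$ with $\Lambda=\mathrm{diag}(\lambda_1,\ldots,\lambda_{\ny})$ and $\lambda_i\in[m,\ell]$. Both inner terms are rational functions of $\tilde D$ and therefore share this eigenbasis, so the bracketed matrix is orthogonally similar to the diagonal matrix with entries
\begin{equation*}
h(\lambda_i) \;=\; \frac{\beta}{\beta+\lambda_i}-\frac{\lambda_i}{\beta+\lambda_i} \;=\; \frac{\beta-\lambda_i}{\beta+\lambda_i}.
\end{equation*}
Consequently $\|K(\beta)\|=\max_i|h(\lambda_i)|$, and the problem is reduced to maximizing $|h(\lambda)|$ over $\lambda\in[m,\ell]$.

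To finish, I would note that $h$ is strictly decreasing in $\lambda>0$, so the maximum of $|h|$ over the interval is attained at one of the two endpoints. A brief case split on the position of $\beta$ relative to $\sqrt{m\ell}$ compares $|h(m)|=|\beta-m|/(\beta+m)$ against $|h(\ell)|=|\beta-\ell|/(\beta+\ell)$: the inequality $|h(m)|\ge|h(\ell)|$ is equivalent to $\beta^2\ge m\ell$, after clearing denominators. Since the scalar map $x\mapsto (x-1)/(x+1)$ is increasing on $[1,\infty)$, the winner of this comparison is exactly the one that produces the larger of $\beta/m$ and $\ell/\beta$. Packaging this into the single variable $\gamma=\max\{\beta/m,\ell/\beta\}\ge 1$ gives $\|K(\beta)\|=(\gamma-1)/(\gamma+1)$, as claimed.

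The genuine obstacles here are quite mild: the only point that requires care is the endpoint comparison in the case $\beta\in(m,\ell)$, where both $h(m)$ and $h(\ell)$ have opposite signs but nonzero magnitude. Everything else (orthogonality of $[Q\ \ P]$, simultaneous diagonalization, monotonicity of $h$) is structural and essentially automatic once the right normal form has been identified.
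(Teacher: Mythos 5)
Your proof is correct. The paper states this lemma without a proof of its own (it is called "straightforward to compute" and the surrounding spectral material is imported from the earlier paper \cite{zhang2016on}), so there is no local argument to compare against; but the route you take is the natural one and every step checks out. In particular: $\bigl[\begin{smallmatrix}Q & P\end{smallmatrix}\bigr]$ is indeed a full $\ny\times\ny$ orthogonal matrix since $P$ is the orthonormal complement of the orthonormal $Q$, and the sign flip on the $P$ block is multiplication by $\mathrm{blkdiag}(I,-I)$, which preserves orthogonality, so $\|K(\beta)\|$ equals the norm of the middle factor $\tilde K=(\beta^{-1}\tilde D+I)^{-1}-(\beta\tilde D^{-1}+I)^{-1}$. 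Diagonalizing $\tilde D$ reduces this to the scalar function $h(\lambda)=(\beta-\lambda)/(\beta+\lambda)$, whose derivative $-2\beta/(\beta+\lambda)^2<0$ gives strict monotonicity, so the maximum of $|h|$ on the spectrum of $\tilde D$ is at $\lambda=m$ or $\lambda=\ell$; and your algebra verifying that $|h(m)|\ge|h(\ell)|\Leftrightarrow\beta^2\ge m\ell$ is right, after which the identification with $\gamma=\max\{\beta/m,\ell/\beta\}$ and the increasing map $x\mapsto(x-1)/(x+1)$ on $[1,\infty)$ closes the argument. One very small caveat: since $m$ and $\ell$ are defined as $\lambda_{\min}(\tilde D)$ and $\lambda_{\max}(\tilde D)$ (hence actual eigenvalues, not merely interval endpoints), the equality in the lemma is exact rather than an upper bound — you implicitly use this, and it is worth stating, but it does not affect correctness.
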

Also, we see from (\ref{eq:AD_schur_blks}) that each Jordan block
associated with a zero eigenvalue of $G_{\AD}(\beta)$ is at most
size $2\times2$. After two iterations, the behavior of ADMM becomes
entirely dependent upon the inner iteration matrix $G_{22}(\beta)=\frac{1}{2}I+\frac{1}{2}K(\beta)$.
\begin{lem}[{\cite[ Lem. 13]{zhang2016on}}]
\label{lem:lem_poly_2norm}For any $\beta$ and any polynomial $p(\cdot)$,
we have
\[
\|p(G_{\AD}(\beta))\, G_{\AD}^{2}(\beta)\|\le c_{1}(\beta)\|p(G_{22}(\beta))\|,
\]
where $c_{1}(\beta)$ is defined in terms of the matrices in Lemma~\ref{lem:blkschur_ad},
as in 
\[
c_{1}(\beta)=\|S(\beta)\|\|S^{-1}(\beta)\|\|G_{\AD}(\beta)\|^{2}.
\]

\end{lem}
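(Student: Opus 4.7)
The plan is to exploit the block-Schur decomposition from Lemma~\ref{lem:blkschur_ad} to rewrite $p(G_{\AD}(\beta))\,G_{\AD}^{2}(\beta)$ as a sandwich of the form $G_{\AD}(\beta)\cdot(\text{inner})\cdot G_{\AD}(\beta)$, with the polynomial $p$ evaluated only on the inner block $G_{22}(\beta)$, and then bound each factor in the spectral norm. Throughout I suppress the $\beta$-dependence and write $T$ for the block upper triangular matrix in~(\ref{eq:AD_schur_blks}), so that $G_{\AD}=US^{-1}TSU^{T}$. Since $U$ is orthogonal ($UU^{T}=U^{T}U=I$), polynomials pass cleanly through the similarity:
\[
p(G_{\AD})\,G_{\AD}^{2}=US^{-1}\,p(T)\,T^{2}\,SU^{T}.
\]

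The crux is the algebraic identity
\[
p(T)\,T^{2}=T\,P_{2}\,T,\qquad P_{2}:=E_{2}\,p(G_{22})\,E_{2}^{T},
\]
where $E_{2}:\R^{\ny}\to\R^{\nx+\ny+\nz}$ is the canonical isometric injection of the middle block, so that $P_{2}$ has $p(G_{22})$ in its $(2,2)$ block and zeros elsewhere. Because $T$ commutes with $p(T)$, it suffices to show $T\,[p(T)-P_{2}]\,T=0$. This is transparent from the block layout of $T$ in~(\ref{eq:AD_schur_blks}): the first block column of $T$ and the third block row of $T$ both vanish, so in any triple product $T\,M\,T$ only the $(2,2)$ block of $M$ can survive (the remaining blocks are multiplied by zero columns or zero rows of the outer $T$'s). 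Since by construction the $(2,2)$ block of $p(T)-P_{2}$ is $p(G_{22})-p(G_{22})=0$, a short direct block computation — tracking how the polynomial-dependent $(1,2)$, $(1,3)$, $(2,3)$ blocks of $p(T)$, as well as the $p(0)I$ entries on the diagonal, all get annihilated — yields the identity.

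With the identity in hand, the similarity relations
\[
US^{-1}T=G_{\AD}\,US^{-1},\qquad TSU^{T}=SU^{T}\,G_{\AD},
\]
which come from $U^{T}G_{\AD}U=S^{-1}TS$ by rearranging and using $UU^{T}=I$, let me peel off two factors of $G_{\AD}$:
\[
p(G_{\AD})\,G_{\AD}^{2}=US^{-1}\,T\,P_{2}\,T\,SU^{T}=G_{\AD}\,[US^{-1}P_{2}SU^{T}]\,G_{\AD}.
\]
Taking spectral norms and invoking submultiplicativity, $\|U\|=1$, and the elementary fact $\|P_{2}\|=\|p(G_{22})\|$ (because $E_{2}$ is an isometry), I obtain $\|p(G_{\AD})G_{\AD}^{2}\|\le\|G_{\AD}\|^{2}\|S\|\|S^{-1}\|\|p(G_{22})\|$, which is precisely the claimed bound with $c_{1}(\beta)=\|S(\beta)\|\|S^{-1}(\beta)\|\|G_{\AD}(\beta)\|^{2}$.

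The main obstacle is confirming the identity $p(T)T^{2}=TP_{2}T$: one must verify that every polynomial-dependent off-diagonal block of $p(T)$ is killed by the zero column/row blocks of the outer $T$'s, so that the only surviving contribution comes from the $(2,2)$ block where $p(T)$ and $P_{2}$ agree. Once this structural cancellation is pinned down, the remainder of the argument is a routine application of the block-Schur decomposition together with submultiplicativity of the spectral norm.
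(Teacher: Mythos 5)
Your proof is correct and takes the approach the paper's block-Schur machinery is set up for. The paper itself merely cites \cite[Lem.~13]{zhang2016on} without reproducing the argument, but the intended proof is precisely what you give: write $G_{\AD}=US^{-1}TSU^{T}$, note that $T$ has a vanishing first block column and third block row (so that, for any block upper-triangular $M$, the sandwich $TMT$ equals $T_{\cdot2}\,M_{22}\,T_{2\cdot}$ and depends only on $M_{22}$), and since both $p(T)$ and $P_{2}$ are block upper-triangular with the same $(2,2)$ block $p(G_{22})$, the identity $p(T)T^{2}=Tp(T)T=TP_{2}T$ follows; peeling off the two outer $T$'s via $US^{-1}T=G_{\AD}US^{-1}$ and $TSU^{T}=SU^{T}G_{\AD}$ and then applying submultiplicativity yields exactly the stated bound with $c_{1}=\|S\|\|S^{-1}\|\|G_{\AD}\|^{2}$.
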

One application of Lemma~\ref{lem:lem_poly_2norm} is to bound the
spectral norm of the $k$-th power iteration, i.e. $\|G_{\AD}^{k}(\beta)\|$,
thereby yielding the following iteration estimate.
\begin{prop}[{\cite[Prop. 7]{zhang2016on}}]
\label{prop:admm_conv}ADMM with fixed parameter $\beta=\sqrt{m\ell}$
attains $\epsilon$ residual convergence in 
\[
2+\left\lceil (\kappa^{\frac{1}{2}}+1)\log(c_{1}\kappa_{M}\epsilon^{-1})\right\rceil \text{ iterations,}
\]
where $c_{1}$ is defined in Lemma~\ref{lem:lem_poly_2norm}, and
$\kappa_{M}=\|M\|\|M^{-1}\|$ with $M$ defined in (\ref{eq:basic_KKT}). 
\end{prop}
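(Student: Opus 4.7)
The plan is to combine the spectral bounds of Lemmas~\ref{lem:blkschur_ad}--\ref{lem:lem_poly_2norm} to control $\|G_{\mathrm{AD}}^{k}(\beta)\|$, then translate that operator-norm bound into the residual bound demanded by Definition~\ref{def:res_conv}. The only nontrivial quantitative input is the evaluation of $\|K(\beta)\|$ at the balanced choice $\beta = \sqrt{m\ell}$ from Lemma~\ref{lem:Knrm}; everything else is essentially bookkeeping.

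First, I would pass from residuals to iterate errors. Let $u^{\star}$ denote the unique fixed point of~(\ref{eq:admm_update}), which coincides with the unique solution of $Mu = r$. Subtracting $u^{\star} = G_{\AD}(\beta)u^{\star} + b(\beta)$ from the update gives $u^{(k)} - u^{\star} = G_{\AD}^{k}(\beta)(u^{(0)} - u^{\star})$, and multiplying by $M$ yields
\[
\|Mu^{(k)} - r\| \le \|M\|\,\|G_{\AD}^{k}(\beta)\|\,\|u^{(0)} - u^{\star}\| \le \kappa_{M}\,\|G_{\AD}^{k}(\beta)\|\,\|Mu^{(0)} - r\|,
\]
where the second step uses $\|u^{(0)} - u^{\star}\| \le \|M^{-1}\|\,\|Mu^{(0)} - r\|$. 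Thus it suffices to force $\|G_{\AD}^{k}(\beta)\| \le \epsilon/\kappa_{M}$.

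Second, for $k \ge 2$ apply Lemma~\ref{lem:lem_poly_2norm} with the polynomial $p(x) = x^{k-2}$ to obtain $\|G_{\AD}^{k}(\beta)\| \le c_{1}(\beta)\,\|G_{22}(\beta)\|^{k-2}$. Since $G_{22}(\beta) = \tfrac{1}{2}I + \tfrac{1}{2}K(\beta)$, the triangle inequality gives $\|G_{22}(\beta)\| \le \tfrac{1}{2}(1 + \|K(\beta)\|)$. Substituting $\beta = \sqrt{m\ell}$ into Lemma~\ref{lem:Knrm} makes $\gamma = \sqrt{\kappa}$ and hence $\|K(\beta)\| = (\sqrt{\kappa}-1)/(\sqrt{\kappa}+1)$, so
\[
\|G_{22}(\beta)\| \le \frac{\sqrt{\kappa}}{\sqrt{\kappa}+1} = 1 - \frac{1}{\sqrt{\kappa}+1}.
\]
Chaining the two bounds yields $\|G_{\AD}^{k}(\beta)\| \le c_{1}(\beta)\bigl(1 - \tfrac{1}{\sqrt{\kappa}+1}\bigr)^{k-2}$, and the elementary inequality $1 - x \le e^{-x}$ converts the geometric-rate requirement $c_{1}\kappa_{M}(1 - \tfrac{1}{\sqrt{\kappa}+1})^{k-2} \le \epsilon$ into the sufficient condition $k - 2 \ge (\sqrt{\kappa}+1)\log(c_{1}\kappa_{M}/\epsilon)$, which after taking a ceiling gives the claimed iteration count.

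I do not anticipate a substantive obstacle, since the hard algebraic content is already packaged into Lemmas~\ref{lem:blkschur_ad}--\ref{lem:lem_poly_2norm}. The one care-point is to keep the \emph{operator-norm} form $\|G_{22}(\beta)\|^{k-2}$ rather than the spectral radius: $K(\beta)$ has different left and right orthogonal factors in~(\ref{eq:Kdef}) and is generally non-normal, so replacing $\|G_{22}^{k-2}\|$ by $\rho(G_{22})^{k-2}$ would incur an unbounded non-normality constant. Working directly with $\|G_{22}(\beta)\|$ bypasses this entirely, since this quantity is already strictly less than one.
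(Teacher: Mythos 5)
Your proof is correct and follows the route the paper sketches just before the proposition: use Lemma~\ref{lem:lem_poly_2norm} with $p(x)=x^{k-2}$ to bound $\|G_{\AD}^{k}\|$ by $c_1\|G_{22}^{k-2}\|\le c_1\|G_{22}\|^{k-2}$, evaluate $\|K(\sqrt{m\ell})\|$ via Lemma~\ref{lem:Knrm} ($\gamma=\sqrt{\kappa}$), bound $\|G_{22}\|\le\frac{1}{2}(1+\|K\|)=\sqrt{\kappa}/(\sqrt{\kappa}+1)$, and translate the operator-norm decay into the residual criterion of Definition~\ref{def:res_conv} at the cost of a $\kappa_M$ factor. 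The computation and the $1-x\le e^{-x}$ bookkeeping are all in order, and your remark about working with $\|G_{22}\|$ rather than $\rho(G_{22})$ is the right care-point; this is consistent with the argument as outlined in the text surrounding the proposition (the full proof resides in the cited reference).
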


\subsection{\label{sub:conv_analy_GMRES}Accelerating Convergence using GMRES}

In the context of quadratic objectives, the convergence of ADMM can
be accelerated by GMRES in a largely plug-and-play manner. Given a
specific choice of parameter $\beta>0$ and an initial point $u^{(0)}=[x^{(0)};z^{(0)};y^{(0)}]$,
we may task GMRES with the fixed-point equation associated with the
ADMM update equation (\ref{eq:admm_update})
\begin{equation}
u^{\star}-G_{\AD}(\beta)u^{\star}=b(\beta),\label{eq:admm_fixed}
\end{equation}
which is indeed a linear system of equations when $\beta$ is held
fixed. It is an elementary fact that the resulting iterates will always
converge onto the fixed-point point faster than regular ADMM (under
a suitably defined metric)~\cite{saad1986gmres}.

Alternatively, the fixed-point equation (\ref{eq:admm_fixed}) is
equivalent to the \emph{left-preconditioned} system of equations
\begin{equation}
P_{\AD}^{-1}(\beta)[Mu^{\star}-r]=0\qquad\Leftrightarrow\qquad\text{(\ref{eq:admm_fixed})},\label{eq:left-prec}
\end{equation}
where $M$ and $r$ are the KKT matrix and residual defined in (\ref{eq:basic_KKT}),
the \emph{ADMM preconditioner matrix} is
\begin{equation}
P_{\AD}(\beta)=\begin{bmatrix}I & 0 & -\beta A^{T}\\
0 & I & -\beta B^{T}\\
0 & 0 & I
\end{bmatrix}\begin{bmatrix}D+\beta A^{T}A & 0 & 0\\
\beta B^{T}A & \beta B^{T}B & 0\\
A & B & -\frac{1}{\beta}I
\end{bmatrix}=\begin{bmatrix}D & -\beta A^{T}B & A^{T}\\
0 & 0 & B^{T}\\
A & B & -\frac{1}{\beta}I
\end{bmatrix}.\label{eq:Pad_def}
\end{equation}
Note that the ADMM iteration matrix satisfies $G_{\AD}(\beta)=I-P_{\AD}^{-1}(\beta)M$
by definition. In turn, GMRES-accelerated ADMM is equivalent to a
preconditioned GMRES solution to the KKT system, $Mu=r$, with preconditioner
$P_{\AD}(\beta)$. Matrix-vector products with $P_{\AD}^{-1}(\beta)$
can always be implemented as the composition of an augmentation operation
and a single iteration of ADMM, as seen in the factorization in (\ref{eq:Pad_def}).

GMRES can also be used to solve the \emph{right-preconditioned} system
\begin{equation}
MP_{\AD}^{-1}(\beta)\hat{u}-r=0,\label{eq:right-prec}
\end{equation}
and the solution is recovered via $u=P_{\AD}^{-1}(\beta)\hat{u}$.
The resulting method performs essentially the same steps as the one
above, but optimizes the iterates under a more preferable metric.
Starting from the same initial point $u^{(0)}$, the $k$-th iterate
of GMRES as applied to (\ref{eq:right-prec}), written $u_{\GM}^{(k)}$,
is guaranteed to produce a KKT residual norm that is smaller than
or equal to that of the $k$-th iterate of regular ADMM, written $u_{\AD}^{(k)}$,
as in $\|Mu_{\GM}^{(k)}-r\|\le\|Mu_{\AD}^{(k)}-r\|$. This property
is preferable as $P_{\AD}(\beta)$ becomes progressively ill-conditioned
and numerical precision becomes an issue; cf.~\cite[Sec. 7]{zhang2016on}
for details.

Throughout this paper, we will refer to both methods as GMRES-accelerated
ADMM, or ADMM-GMRES for short, and reserve the ``left-preconditioned''
or the ``right-preconditioned'' specifications only where the distinctions
are important. The reason is that both methods share a common bound
for the purposes of convergence analysis.
\begin{prop}
\label{prop:admm_gmres}Given fixed $\beta>0$, let $u^{(k)}$ be
the iterate generated at the $k$-th iteration of GMRES as applied
to either (\ref{eq:left-prec}) or (\ref{eq:right-prec}). Then the
following bounds hold for all $k\ge2$ 
\[
\frac{\|Mu^{(k)}-r\|}{\|Mu^{(0)}-r\|}\le c_{1}\kappa_{P}\min_{\begin{subarray}{c}
p\in\P_{k-2}\\
p(1)=1
\end{subarray}}\|p(K)\|,
\]
where $K\equiv K(\beta)$ is defined in (\ref{eq:Kdef}), $c_{1}$
is defined in Lemma~\ref{lem:lem_poly_2norm}, $\kappa_{P}=\|P_{\AD}\|\|P_{\AD}^{-1}\|$
with $P_{\AD}\equiv P_{\AD}(\beta)$ defined in (\ref{eq:Pad_def}),
and $\P_{k}$ denotes the space of order-$k$ polynomials.\end{prop}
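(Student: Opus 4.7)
The plan is to combine three ingredients: the standard minimax characterization of the GMRES residual, the block-Schur structure of Lemma~\ref{lem:blkschur_ad} together with the polynomial bound of Lemma~\ref{lem:lem_poly_2norm}, and the affine relation $G_{22}(\beta)=\tfrac{1}{2}I+\tfrac{1}{2}K(\beta)$ between the inner iteration matrix and $K(\beta)$.

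First, I would treat the left- and right-preconditioned variants in a unified fashion. For the left-preconditioned system~(\ref{eq:left-prec}), GMRES minimizes $\|q(P_{\AD}^{-1}M)\,P_{\AD}^{-1}(Mu^{(0)}-r)\|$ over $q\in\P_{k}$ with $q(0)=1$. Since $P_{\AD}^{-1}M=I-G_{\AD}(\beta)$, the reparametrization $p(x)=q(1-x)$ converts this into the minimization of $\|p(G_{\AD}(\beta))\,P_{\AD}^{-1}(Mu^{(0)}-r)\|$ over $p\in\P_{k}$ with $p(1)=1$, and passing from the preconditioned residual back to $\|Mu^{(k)}-r\|$ costs a factor of $\|P_{\AD}\|\|P_{\AD}^{-1}\|=\kappa_{P}$. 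For the right-preconditioned system~(\ref{eq:right-prec}), the iteration operator $I-MP_{\AD}^{-1}=P_{\AD}\,G_{\AD}(\beta)\,P_{\AD}^{-1}$ is similar to $G_{\AD}(\beta)$ through $P_{\AD}$, so $\|q(MP_{\AD}^{-1})\|\le\kappa_{P}\,\|p(G_{\AD}(\beta))\|$ for the same polynomial pair, yielding the identical bound.

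Next, I would restrict the minimization to polynomials of the factored form $p(x)=x^{2}\,\tilde{p}(x)$ with $\tilde{p}\in\P_{k-2}$. The constraint $p(1)=1$ reduces to $\tilde{p}(1)=1$, and $p(G_{\AD})=\tilde{p}(G_{\AD})\,G_{\AD}^{2}$, so Lemma~\ref{lem:lem_poly_2norm} delivers $\|p(G_{\AD})\|\le c_{1}(\beta)\,\|\tilde{p}(G_{22}(\beta))\|$. This is the step where both the factor $c_{1}$ and the degree reduction from $k$ to $k-2$ originate: the two powers of $G_{\AD}$ absorb the size-$2$ Jordan blocks at the zero eigenvalue visible in the block-Schur form~(\ref{eq:AD_schur_blks}). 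The proof is then finished by the affine change of variable $\hat{p}(x)=\tilde{p}(\tfrac{1}{2}+\tfrac{1}{2}x)$, which sends $\tilde{p}(G_{22}(\beta))$ to $\hat{p}(K(\beta))$, preserves the polynomial degree, and maps the constraint $\tilde{p}(1)=1$ to $\hat{p}(1)=1$. Taking the minimum over all admissible $\hat{p}\in\P_{k-2}$ with $\hat{p}(1)=1$ yields the claimed bound. I do not anticipate any serious technical obstacle; the only delicate point is the bookkeeping required to present the left- and right-preconditioned residuals under a single inequality with a common $\kappa_{P}$ factor.
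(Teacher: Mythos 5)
Your proposal is correct and follows essentially the same route as the paper's own proof: the minimal-residual property of GMRES reduced to $\|p(G_{\AD})\|$ (with $\kappa_{P}$ entering via the preconditioned-residual bound in the left case and via the $P_{\AD}$-similarity in the right case), followed by the factored-polynomial restriction $p(x)=x^{2}\tilde{p}(x)$ through Lemma~\ref{lem:lem_poly_2norm}, and the affine substitution taking $G_{22}$ to $K$. Your write-up is slightly more explicit than the paper (spelling out the reparametrization $p(x)=q(1-x)$ and the change of variable $\hat{p}(x)=\tilde{p}(\tfrac12+\tfrac12 x)$), but the underlying argument is identical.
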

\begin{proof}
Given an arbitrary linear system, $Au=b$, GMRES generates iterates
$u^{(k)}$ that satisfies the minimal residual property~\cite{saad1986gmres}
\begin{equation}
\|r^{(k)}\|/\|r^{(0)}\|\le\min_{\begin{subarray}{c}
p\in\P_{k}\\
p(1)=1
\end{subarray}}\|p(I-A)\|,\label{eq:gmres_bnd}
\end{equation}
where $r^{(k)}=Au^{(k)}-b$ is the $k$-th residual vector. Furthermore,
Lemma~\ref{lem:lem_poly_2norm} yields for all $k\ge2$, 
\begin{equation}
\min_{\begin{subarray}{c}
p\in\P_{k}\\
p(1)=1
\end{subarray}}\|p(G_{\AD})\|\le c_{1}\min_{\begin{subarray}{c}
p\in\P_{k-2}\\
p(1)=1
\end{subarray}}\|p(G_{22})\|=c_{1}\min_{\begin{subarray}{c}
p\in\P_{k-2}\\
p(1)=1
\end{subarray}}\|p(K)\|,\label{eq:admm_gmres3}
\end{equation}
and the last equality is due to the existence of a bijective linear
map between $G_{22}\cup\{1\}$ and $K\cup\{1\}$. In the left-preconditioned
system (\ref{eq:left-prec}), the matrix $I-A$ is $I-P_{\AD}^{-1}M=G_{\AD}$,
and the $\kappa_{P}$ factor arises by bounding the residuals $\|P_{\AD}r^{(k)}\|/\|P_{\AD}r^{(0)}\|\le\kappa_{P}\|r^{(k)}\|/\|r^{(0)}\|$.
In the right-preconditioned system (\ref{eq:left-prec}), the matrix
$I-A$ is $I-MP_{\AD}^{-1}=P_{\AD}G_{\AD}P_{\AD}^{-1}$, and the $\kappa_{P}$
factor arises via $\|p(P_{\AD}G_{\AD}P_{\AD}^{-1})\|\le\kappa_{P}\|p(G_{\AD})\|$.
\end{proof}
In order to use Proposition~\ref{prop:admm_gmres} to derive useful
convergence estimates, the polynomial norm-minimization problem can
be reduced into a polynomial min-max approximation problem over a
set of points on the complex plane. More specifically, consider the
following sequence of inequalities
\begin{equation}
\min_{\begin{subarray}{c}
p\in\P_{k}\\
p(1)=1
\end{subarray}}\|p(K)\|\le\min_{\begin{subarray}{c}
p\in\P_{k}\\
p(1)=1
\end{subarray}}\|Xp(\Lambda)X^{-1}\|\le\kappa_{X}\min_{\begin{subarray}{c}
p\in\P_{k}\\
p(1)=1
\end{subarray}}\max_{\lambda\in\Lambda\{K\}}|p(\lambda)|,\label{eq:eig_prob}
\end{equation}
which makes the following normality assumption, that is standard within
this context.

\begin{assume}[$\kappa_X$ is bounded]\label{ass:kappaX}Given fixed
$\beta>0$, the matrix $K\equiv K(\beta)$, defined in (\ref{eq:Kdef}),
is diagonalizable with eigendecomposition, $K=X\Lambda X^{-1}$. Furthermore,
the condition number for the matrix-of-eigenvectors, $\kappa_{X}=\|X\|\|X^{-1}\|$,
is bounded from above by an absolute constant.\end{assume}

We refer to this last problem in (\ref{eq:eig_prob}) as the \emph{eigenvalue
approximation problem}. Only in very rare cases is an explicit closed-form
solution known, but any heuristic choice of polynomial $p(\cdot)$
will provide a valid upper-bound.

\section{Convergence Analysis for ADMM-GMRES}

Our main result in this paper is that ADMM-GMRES converges to an $\epsilon$-accurate
solution in $O(\kappa^{2/3}\log\epsilon^{-1})$ iterations for any
value of $\beta>0$, in the sense of the residual. We split the precise
statement into two parts. First, for very large and very small values
of $\beta$, we can conclusively establish that ADMM-GMRES convergences
in $O(\sqrt{\kappa}\log\epsilon^{-1})$ iterations. This is asymptotically
the same as the optimal figure for regular ADMM.
\begin{thm}[Extremal $\beta$]
\label{thm:real_eig}For any choice of $\beta>\ell$ or $0<\beta<m$,
GMRES-accelerated ADMM generates the iterate $u^{(k)}=[x^{(k)};z^{(k)};y^{(k)}]$
at the $k$-th iteration that satisfies
\[
\frac{\|Mu^{(k)}-r\|}{\|Mu^{(0)}-r\|}\le2\, c_{1}\,\kappa_{P}\left[1+\left(\max\left\{ \frac{\beta}{\ell},\frac{m}{\beta}\right\} -1\right)^{-1}\right]\left(\frac{\sqrt{2\kappa}-1}{\sqrt{2\kappa}+1}\right)^{0.317\, k}
\]
where $\kappa=\ell/m$ and the factors $c_{1},\kappa_{P}$ are polynomial
in $\beta+\beta^{-1}$ and defined in Lemma~\ref{lem:lem_poly_2norm}
and Proposition~\ref{prop:admm_gmres}.\end{thm}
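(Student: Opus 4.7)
The plan is to build on Proposition~\ref{prop:admm_gmres}, which reduces the task to estimating $\min_{p\in\P_{k-2},\,p(1)=1}\|p(K(\beta))\|$, and to exploit the structural simplification that occurs in the extremal regime. The key observation is that the Cayley-transform factor $f(\tilde D)=(\beta I-\tilde D)(\beta I+\tilde D)^{-1}$ appearing inside (\ref{eq:Kdef}) becomes definite for extremal $\beta$: positive definite for $\beta>\ell$ and negative definite for $0<\beta<m$, with eigenvalue magnitudes contained in $[a,b]$ where $a=|\beta-\ell|/(\beta+\ell)$ and $b=|\beta-m|/(\beta+m)$.

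The first step is to rewrite $K(\beta)=D_s\,G$, with $D_s=\blkdiag(I_{\nz},-I_{\ny-\nz})$ arising from the sign flip on $P$ in $\begin{bmatrix}Q^T\\-P^T\end{bmatrix}$, and $G=[Q,P]^T f(\tilde D)[Q,P]$ an orthogonal conjugate of $f(\tilde D)$, hence symmetric definite. Taking $S=G^{1/2}D_sG^{1/2}$ yields the similarity $K=G^{-1/2}SG^{1/2}$ with $S$ symmetric, and therefore
\[
\|p(K)\|\;\le\;\sqrt{\kappa(G)}\,\|p(S)\|\;=\;\sqrt{b/a}\;\max_{\lambda\in\Lambda(S)}|p(\lambda)|.
\]
Ostrowski's theorem applied to the congruence $S=G^{1/2}D_sG^{1/2}$ places $\Lambda(S)\subset[-b,-a]\cup[a,b]$, and the prefactor $\sqrt{b/a}$ can be bounded, after elementary algebra using $\kappa\ge1$, by the $1+(\max\{\beta/\ell,m/\beta\}-1)^{-1}$ factor appearing in the theorem. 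Next I would invoke the classical $p(t)=q(t^2)$ substitution: with $q$ of degree $j=\lfloor(k-2)/2\rfloor$ and $q(1)=1$, the symmetric two-interval problem reduces to standard Chebyshev approximation on $[a^2,b^2]$ anchored at $1>b^2$. The scaled Chebyshev polynomial $q(u)=T_j(\phi(u))/T_j(\phi(1))$ with $\phi$ mapping $[a^2,b^2]$ onto $[-1,1]$ produces a per-step rate of $(1-s)/(1+s)$ with $s=\sqrt{(1-b^2)/(1-a^2)}$. Substituting $1-b^2=4\beta m/(\beta+m)^2$ and $1-a^2=4\beta\ell/(\beta+\ell)^2$ yields $s=(\beta+\ell)/[\sqrt{\kappa}(\beta+m)]$ for $\beta>\ell$ (and the symmetric counterpart for $\beta<m$).

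I expect the main obstacle to be the final bookkeeping step: showing that $[(1-s)/(1+s)]^{j}$ admits a clean uniform bound by $[(\sqrt{2\kappa}-1)/(\sqrt{2\kappa}+1)]^{0.317k}$ over the entire extremal range of $\beta$. The replacement of $\sqrt{\kappa}$ by the looser $\sqrt{2\kappa}$ and the non-tight exponent $0.317\approx 1/\pi$ appear to be convenient rather than sharp choices, intended to simultaneously absorb the worst-case value of $s$ over $\beta$ and the factor-of-two degree loss inherent in the $t\mapsto t^2$ substitution (with small $k$ absorbed into the $2c_1\kappa_P$ prefactor). The remaining ingredients---the structural decomposition $K=D_sG$, the Ostrowski-type bound on $\Lambda(S)$, and the scaled Chebyshev estimate---are conceptually straightforward once the definiteness of $f(\tilde D)$ is exploited; in particular, no diagonalizability assumption on $K$ (Assumption~\ref{ass:kappaX}) is needed, since the similarity factor $\sqrt{\kappa(G)}$ plays the role of $\kappa_X$ directly.
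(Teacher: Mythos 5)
Your proposal follows the paper's overall architecture (Proposition~\ref{prop:admm_gmres}, an eigenvalue enclosure for $K(\beta)$, and a Chebyshev-type polynomial bound), and the first structural reduction is essentially identical to the paper's: writing $K=JG$ with $J=\blkdiag(I_{\nz},-I_{\ny-\nz})$ and $G = [Q,P]^{T}\tilde K[Q,P]$ definite for extremal $\beta$, and passing through the similarity $W K W^{-1}$ with $W=G^{1/2}$ to a real symmetric matrix, is precisely what the paper does in Proposition~\ref{prop:Jsym_real} and the proof of Lemma~\ref{lem:eig_real}; your factor $\sqrt{\kappa(G)}=\sqrt{b/a}$ \emph{is} the paper's $\kappa_{X}\le\sqrt{\|JK\|\,\|(JK)^{-1}\|}$, and your remark that Assumption~\ref{ass:kappaX} is not needed matches the paper's treatment. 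Where you genuinely diverge is in the next two steps. For the enclosure $\Lambda(K)\subset[-b,-a]\cup[a,b]$ you use Ostrowski's inertia theorem on $S=G^{1/2}JG^{1/2}$, whereas the paper uses a block Gershgorin argument (Proposition~\ref{prop:Jsym_Gersh}) on the $2\times2$ block form (\ref{eq:Kblks}); your version gives the inner endpoint $a=(\gamma-\kappa)/(\gamma+\kappa)$, strictly tighter than the paper's $(\gamma-2\kappa)/(\gamma+\kappa)$, and it also eliminates the case split in the proof of Lemma~\ref{lem:approx_real} between $\gamma\in(\kappa,2\kappa]$ (single-interval) and $\gamma>2\kappa$ (two intervals), since the Ostrowski intervals are always disjoint. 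For the polynomial approximation you use the classical $p(t)=q(t^2)$ substitution, which is actually the \emph{optimal} polynomial for a symmetric two-interval problem, whereas the paper uses the suboptimal monomial-times-Chebyshev product of Lemma~\ref{lem:seg_seg}, and it is that hand-built polynomial that produces the constant $0.317$ via Lemma~\ref{lem:ratio_monotonous}.

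The one thing to guard against is the final bookkeeping you flagged. With your ingredients you will not naturally land on the literals $\sqrt{2\kappa}$ and $0.317\,k$: you get $\kappa_I=(1-a^2)/(1-b^2)=\kappa\bigl((\gamma+1)/(\gamma+\kappa)\bigr)^2\le\kappa$, and a polynomial of degree $2\lfloor(k-2)/2\rfloor\le k-2$, so the natural bound from your route is roughly $2\bigl[(\sqrt{\kappa_I}-1)/(\sqrt{\kappa_I}+1)\bigr]^{(k-3)/2}$, which is \emph{stronger} (and in particular implies the theorem for $k$ large enough that $(k-3)/2\ge 0.317\,k$, i.e.\ $k\ge 9$; for small $k$ the $2c_1\kappa_P$ prefactor and the leading ``$2$'' absorb the gap, but that last verification needs to be made explicit rather than waved away). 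So your proof, once the arithmetic is written out, reproduces (and slightly improves) the claimed bound by a route that is simpler in two places than the paper's own argument.
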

\begin{cor}
GMRES-accelerated ADMM achieves $\epsilon$ residual convergence in
\[
O(\sqrt{\kappa}\log\epsilon^{-1}+\sqrt{\kappa}|\log\beta|)\text{ iterations}
\]
for any choice of $\beta>\ell$ or $0<\beta<m$.
\end{cor}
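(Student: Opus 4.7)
The plan is to apply Theorem~\ref{thm:real_eig} directly: plug in the bound, take logarithms, and solve for the smallest $k$ that drives the residual ratio below $\epsilon$. Writing $\rho_{\kappa}=(\sqrt{2\kappa}-1)/(\sqrt{2\kappa}+1)$, the bound takes the form $C(\beta)\,\rho_{\kappa}^{\,0.317\,k}$, where $C(\beta)$ collects the prefactor $2\,c_{1}\,\kappa_{P}\,[1+(\max\{\beta/\ell,m/\beta\}-1)^{-1}]$. The first task is to estimate the decay rate $-\log\rho_{\kappa}$: the elementary inequality $\log\frac{1+x}{1-x}\ge 2x$ for $x\in(0,1)$, applied with $x=1/\sqrt{2\kappa}$, gives $-\log\rho_{\kappa}\ge\sqrt{2/\kappa}$, so convergence is guaranteed once $0.317\,k\,\sqrt{2/\kappa}\ge\log(C(\beta)\epsilon^{-1})$, i.e.\ for $k$ of order $\sqrt{\kappa}\log(C(\beta)\epsilon^{-1})$.

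The remaining task is to bound $\log C(\beta)$ by a constant multiple of $|\log\beta|$ in the allowed regime $\beta>\ell$ or $\beta<m$. Theorem~\ref{thm:real_eig} already asserts that $c_{1}(\beta)$ and $\kappa_{P}(\beta)$ are polynomial in $\beta+\beta^{-1}$, so their logarithms are each $O(\log(\beta+\beta^{-1}))=O(|\log\beta|)$ as $\beta\to 0^{+}$ or $\beta\to\infty$. The bracket factor simplifies to $\beta/(\beta-\ell)$ for $\beta>\ell$ and to $m/(m-\beta)$ for $\beta<m$, and is therefore $O(1)$ whenever $\beta$ is held a fixed fraction away from the transition interval $[m,\ell]$. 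Combining these pieces produces
\[
k=O\bigl(\sqrt{\kappa}\,[\log\epsilon^{-1}+|\log\beta|]\bigr)=O(\sqrt{\kappa}\log\epsilon^{-1}+\sqrt{\kappa}|\log\beta|),
\]
as claimed.

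The main (and essentially the only) obstacle is the claim used above that $c_{1}(\beta)$ and $\kappa_{P}(\beta)$ are polynomial in $\beta+\beta^{-1}$; this is asserted in Theorem~\ref{thm:real_eig} but can also be read off by direct inspection. The scaling matrix $S(\beta)$ in (\ref{eq:schur_basis_mm}) has diagonal blocks that scale like $\beta$ and $1$, giving $\|S\|\|S^{-1}\|=O(\max\{\beta,\beta^{-1}\})$; and the iteration matrix $G_{\AD}(\beta)$ in (\ref{eq:ADMM_Gain}) together with the preconditioner $P_{\AD}(\beta)$ in (\ref{eq:Pad_def}) are rational in $\beta$ with a pole only at $\beta=0$, so their norms, and hence $\kappa_{P}$, grow at most polynomially in $\beta+\beta^{-1}$. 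Taking logarithms of these polynomial bounds absorbs the $\beta$-dependence into the $|\log\beta|$ term of the final estimate, and everything else in the argument is elementary algebra with logarithms.
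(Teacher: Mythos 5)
Your derivation is the natural one and is, up to notation, the same calculation the paper implicitly intends: substitute the Theorem~\ref{thm:real_eig} bound, lower-bound the decay rate via $\log\frac{1+x}{1-x}\ge 2x$ with $x=(2\kappa)^{-1/2}$ to get $-\log\rho_\kappa\ge\sqrt{2/\kappa}$, and then solve for the first $k$ that drives $C(\beta)\rho_\kappa^{0.317k}$ below $\epsilon$, which gives $k=O(\sqrt{\kappa}\log(C(\beta)\epsilon^{-1}))$. The treatment of $c_1$ and $\kappa_P$ as polynomial in $\beta+\beta^{-1}$, hence contributing $O(1+|\log\beta|)$ to $\log C(\beta)$, is also correct and matches what Theorem~\ref{thm:real_eig} asserts.

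The one place that deserves more care than you give it is exactly the place you flag: the bracket factor $1+\bigl(\max\{\beta/\ell,m/\beta\}-1\bigr)^{-1}$, which equals $\beta/(\beta-\ell)$ for $\beta>\ell$ and $m/(m-\beta)$ for $\beta<m$. As $\beta\to\ell^{+}$ or $\beta\to m^{-}$ this quantity diverges, while $|\log\beta|$ remains bounded, so its logarithm is \emph{not} absorbed by the $\sqrt{\kappa}|\log\beta|$ term. Consequently the corollary's iteration count cannot be read off from Theorem~\ref{thm:real_eig} uniformly over ``any $\beta>\ell$ or $0<\beta<m$''; it only follows with a constant that depends on how far $\beta$ stays from the endpoints $\ell$ and $m$ (for instance, $\beta\ge 2\ell$ or $\beta\le m/2$ makes the bracket at most $2$). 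You recognize this with your ``held a fixed fraction away from the transition interval'' caveat, which is the right observation, but be aware that this quietly replaces the corollary's literal claim by a restricted version. That restriction is an imprecision already present in the paper's statement, not a flaw in your reasoning — worth stating explicitly rather than sliding past in a subordinate clause.
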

For intermediate choices of $\beta$, the same result \emph{almost
holds}. Subject to the normality assumption in Assumption~\ref{ass:kappaX},
ADMM-GMRES converges in $O(\kappa^{2/3}\log\epsilon^{-1})$ iterations.
Accordingly, we conclude that ADMM-GMRES converge within this number
of iterations for every fixed value of $\beta>0$.
\begin{thm}[Intermediate $\beta$]
\label{thm:cmp_eig}For any choice of $m\le\beta\le\ell$, GMRES-accelerated
ADMM generates the iterate $u^{(k)}=[x^{(k)};z^{(k)};y^{(k)}]$ at
the $k$-th iteration that satisfies
\[
\frac{\|Mu^{(k)}-r\|}{\|Mu^{(0)}-r\|}\le2\, c_{1}\,\kappa_{P}\,\kappa_{X}\left(\frac{\kappa^{2/3}}{\kappa^{2/3}+1}\right)^{0.209\, k}
\]
where $\kappa=\ell/m$, the factors $c_{1},\kappa_{P}$ are polynomial
in $\beta$ and defined in Lemma~\ref{lem:lem_poly_2norm} and Proposition~\ref{prop:admm_gmres},
and the factor $\kappa_{X}$ is defined in Assumption~\ref{ass:kappaX}.\end{thm}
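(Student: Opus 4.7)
The plan is to apply Proposition~\ref{prop:admm_gmres} together with Assumption~\ref{ass:kappaX} and the chain of inequalities in~(\ref{eq:eig_prob}) to reduce the theorem to the scalar min-max problem
\[
\min_{p\in\P_{k-2},\,p(1)=1}\;\max_{\lambda\in\Lambda\{K(\beta)\}}|p(\lambda)|.
\]
After this reduction, the prefactor $c_{1}\kappa_{P}\kappa_{X}$ already matches the statement of Theorem~\ref{thm:cmp_eig}, and the task reduces to exhibiting an explicit polynomial that achieves the geometric rate $(\kappa^{2/3}/(\kappa^{2/3}+1))^{0.209\,k}$ uniformly over $\beta\in[m,\ell]$.

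The first step is to localize the spectrum of $K(\beta)$ in a favorable complex region. Using the orthogonal change of basis induced by $T=\begin{bmatrix}Q & P\end{bmatrix}$ from Lemma~\ref{lem:blkschur_ad}, a direct simplification of~(\ref{eq:Kdef}) gives $K(\beta)=J\tilde{F}(\beta)$, where $J=\blkdiag(I_{\nz},-I_{\ny-\nz})$ is a signature matrix and $\tilde{F}(\beta)=T^{T}(\beta I-\tilde{D})(\beta I+\tilde{D})^{-1}T$ is symmetric with spectrum contained in the real interval $[\sigma_{-},\sigma_{+}]$, where $\sigma_{+}=(\beta-m)/(\beta+m)\ge 0$ and $\sigma_{-}=-(\ell-\beta)/(\ell+\beta)\le 0$. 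For intermediate $\beta$ the symmetric factor $\tilde{F}$ is genuinely indefinite, so $K(\beta)$ may admit complex eigenvalues; testing the eigenequation $\tilde{F}v=\lambda Jv$ against $v$ shows that any such complex eigenvalue must be supported on a $J$-isotropic eigenvector. I would leverage this $J$-symmetric product structure to confine $\Lambda\{K(\beta)\}$ inside a compact region $\Omega(\beta)\subset\C$ strictly tighter than the disk of radius $\|K(\beta)\|$ from Lemma~\ref{lem:Knrm}, aiming for an $\Omega(\beta)$ that lies in the union of the real segment $[\sigma_{-},\sigma_{+}]$ with a disk centered at the origin of radius comparable to $\sqrt{|\sigma_{+}\sigma_{-}|}$.

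The second step is to construct a polynomial $p\in\P_{k-2}$ with $p(1)=1$ that is uniformly small on $\Omega(\beta)$. A natural candidate factors as $p(\lambda)=p_{1}(\lambda)\,p_{2}(\lambda)$, where $p_{1}$ is a scaled Chebyshev polynomial controlling the real segment $[\sigma_{-},\sigma_{+}]$ and $p_{2}$ is a scaled monomial or Faber polynomial handling the complex portion of $\Omega(\beta)$, both factors normalized so that $p(1)=1$. The degree split between $p_{1}$ and $p_{2}$ is optimized to balance their two decay rates, and the outer optimization over $\beta\in[m,\ell]$ is dominated by the intermediate value of $\beta$ at which the real and complex contributions to $\Omega(\beta)$ are of comparable size; this balancing is the source of the exponent $2/3$, and tracking the loss from the reduction $\P_{k}\to\P_{k-2}$ together with the constants in the ellipse or disk parametrization produces the specific factor $0.209$.

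The \emph{principal obstacle} is the spectral localization. The simple norm estimate $\|K(\beta)\|=(\gamma-1)/(\gamma+1)$ from Lemma~\ref{lem:Knrm}, with $\gamma=\max\{\beta/m,\ell/\beta\}$, approaches $(\kappa-1)/(\kappa+1)$ near the endpoints of $[m,\ell]$ and delivers only an $O(\kappa\log\epsilon^{-1})$ rate if used as a black-box disk bound. A strict improvement to $O(\kappa^{2/3}\log\epsilon^{-1})$ must therefore move beyond the operator norm of $K(\beta)$ and genuinely exploit the $J$-symmetric product structure; once the correct region $\Omega(\beta)$ has been identified, the polynomial construction and the subsequent arithmetic optimization over $\beta$ are standard approximation-theoretic calculations.
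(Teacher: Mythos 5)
Your blueprint matches the paper's proof: reduce via Proposition~\ref{prop:admm_gmres} and Assumption~\ref{ass:kappaX} to the eigenvalue approximation problem (\ref{eq:eig_prob}), enclose $\Lambda\{K(\beta)\}$ in a disk plus a real interval by exploiting $J$-symmetry (Proposition~\ref{prop:Jsym_comp_radius} together with Lemma~\ref{lem:eig_distr}), build a monomial-times-Chebyshev annihilating polynomial (Lemma~\ref{lem:disk_inter}), and balance the degree split against the worst $\gamma\in[\sqrt{\kappa},\kappa]$ to extract the $\kappa^{2/3}$ exponent (Lemma~\ref{lem:approx_cmp}). One caution: the $J$-neutral/numerical-range argument in Proposition~\ref{prop:Jsym_comp_radius} delivers the arithmetic-mean disk radius $\min_{\eta}\|K-\eta J\|=(\sigma_{+}+|\sigma_{-}|)/2$, not the geometric mean $\sqrt{|\sigma_{+}\sigma_{-}|}$ you aim for — the latter is a strictly stronger localization which you have not established, whereas the arithmetic mean is exactly what is available and is all that the $\kappa^{2/3}$ bound requires.
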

\begin{cor}
\label{cor:conv_23}GMRES-accelerated ADMM achieves $\epsilon$ residual
convergence in
\[
O(\kappa^{2/3}\log\epsilon^{-1}+\kappa^{2/3}|\log\beta|)\text{ iterations}
\]
for any choice of $\beta>0$.
\end{cor}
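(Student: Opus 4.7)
The plan is to derive the corollary by inverting the two contraction bounds already in hand, handling intermediate and extremal $\beta$ separately. For $m \le \beta \le \ell$, I would require the right-hand side of Theorem~\ref{thm:cmp_eig} to drop below $\epsilon$, namely
\[
2\,c_1\,\kappa_P\,\kappa_X\,\left(\frac{\kappa^{2/3}}{\kappa^{2/3}+1}\right)^{0.209\,k} \le \epsilon,
\]
and rearrange to $k \ge \log(2 c_1 \kappa_P \kappa_X /\epsilon) / [0.209\log(1 + \kappa^{-2/3})]$. Applying the elementary inequality $\log(1+t) \ge t/(1+t)$ at $t = \kappa^{-2/3}$ replaces the denominator by a lower bound of order $(\kappa^{2/3}+1)^{-1}$, which reduces the sufficient iteration count to $O\bigl((\kappa^{2/3}+1)\log(c_1\kappa_P\kappa_X/\epsilon)\bigr)$.

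The next task is to verify that the $\log(c_1\kappa_P\kappa_X)$ factor behaves like $O(1+|\log\beta|)$. Inspection of $S(\beta)$ in Lemma~\ref{lem:blkschur_ad} and $P_{\AD}(\beta)$ in (\ref{eq:Pad_def}) shows that each entry is a rational function of $\beta$ of bounded degree, so $\|S\|\|S^{-1}\|$, $\|G_{\AD}(\beta)\|$, and $\kappa_P$ are polynomial in $\beta + \beta^{-1}$, whence their logarithms are $O(1 + |\log\beta|)$. Assumption~\ref{ass:kappaX} controls $\kappa_X$ by an absolute constant. Combining these, the intermediate-$\beta$ iteration count is $O(\kappa^{2/3}\log\epsilon^{-1} + \kappa^{2/3}|\log\beta|)$, matching the claim.

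For the extremal ranges $\beta > \ell$ and $0 < \beta < m$, I would invoke the corollary that immediately follows Theorem~\ref{thm:real_eig}, which already provides an $O(\sqrt{\kappa}\log\epsilon^{-1} + \sqrt{\kappa}|\log\beta|)$ bound. Since $\sqrt{\kappa}\le\kappa^{2/3}$ for every $\kappa\ge 1$, this is absorbed without loss into the weaker $O(\kappa^{2/3}\log\epsilon^{-1} + \kappa^{2/3}|\log\beta|)$ estimate. The two regimes together cover all $\beta > 0$, yielding the unified bound.

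The main obstacle I anticipate is the $\beta$-dependence bookkeeping: one must carefully track how $c_1$, $\kappa_P$, and the prefactor $[1+(\max\{\beta/\ell,m/\beta\}-1)^{-1}]$ appearing in Theorem~\ref{thm:real_eig} scale with $\beta$, and confirm that they contribute only logarithmically to the iteration count. Fortunately, the intermediate bound applies on the closed interval $[m,\ell]$ and the extremal bound on its complement, so the potentially singular prefactor of Theorem~\ref{thm:real_eig} is never evaluated near $\beta = m$ or $\beta = \ell$; the decomposition into two ranges exactly sidesteps this difficulty.
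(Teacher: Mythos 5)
Your plan mirrors the paper's implicit argument (no explicit proof is given for this corollary): split the parameter range into the intermediate interval $[m,\ell]$ handled by Theorem~\ref{thm:cmp_eig} and the extremal complement handled by the unnamed corollary to Theorem~\ref{thm:real_eig}, invert each contraction bound via $\log(1+t)\ge t/(1+t)$, note that $\log(c_1\kappa_P)=O(1+|\log\beta|)$ because these factors are polynomial in $\beta+\beta^{-1}$, and absorb the extremal $\sqrt{\kappa}$ rate into the weaker $\kappa^{2/3}$ bound using $\sqrt{\kappa}\le\kappa^{2/3}$. All of that is sound and is the expected reduction.

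Your last paragraph, however, contains a misconception worth flagging. You dismiss the prefactor $\bigl[1+(\max\{\beta/\ell,m/\beta\}-1)^{-1}\bigr]$ on the grounds that the extremal bound ``is never evaluated near $\beta=m$ or $\beta=\ell$'' because the intermediate bound owns the closed interval $[m,\ell]$. That reasoning inverts the difficulty: the extremal bound is used exactly on the open complement $(0,m)\cup(\ell,\infty)$, which contains parameters arbitrarily close to the two endpoints. At $\beta=\ell(1+\delta)$ for small $\delta>0$ the prefactor is of order $\delta^{-1}$, and the resulting extra iteration cost $\log\delta^{-1}$ is unbounded even though $|\log\beta|$ barely changes; the same happens at $\beta=m(1-\delta)$. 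So the claimed $O(\kappa^{2/3}\log\epsilon^{-1}+\kappa^{2/3}|\log\beta|)$ count is not uniform over the transitional band $\gamma=\max\{\beta/m,\ell/\beta\}\in(\kappa,2\kappa)$. To be fair, this defect is already present in the paper's own corollary following Theorem~\ref{thm:real_eig}, whose bound carries the same factor, so you are no less rigorous than the source; but the mechanism you offer for sidestepping it does not work. A genuine fix would either restrict the extremal regime to $\gamma\ge 2\kappa$ (where the prefactor is at most $2$), or appeal to the assumed uniform bound on $\kappa_X$ from Assumption~\ref{ass:kappaX} in place of the diverging estimate of Lemma~\ref{lem:eig_real} on the transitional band.
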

As we reviewed in Section~\ref{sec:Preliminaries}, convergence analysis
for GMRES can be reduced to a polynomial approximation problem over
the eigenvalues of $K(\beta)$, written in (\ref{eq:eig_prob}). Our
proofs for Theorems~\ref{thm:real_eig}~\&~\ref{thm:cmp_eig} are
based on solving this polynomial approximation problem heuristically.
The discrete eigenvalue distribution of $K(\beta)$ is enclosed within
simple regions on the complex plane in Section~\ref{sec:Eigenvalue-Distribution}
for different values of $\beta$. The polynomial approximation problems
associated with these outer enclosures are solved in their most general
form in Section~\ref{sec:approx_probs}. These results are pieced
together in Section~\ref{sec:Proof-main}, yielding proofs to our
main results.

\section{\label{sec:Eigenvalue-Distribution}Eigenvalue Distribution of the
Iteration Matrix}

\begin{figure}
\subfloat[]{\includegraphics[width=0.3\columnwidth]{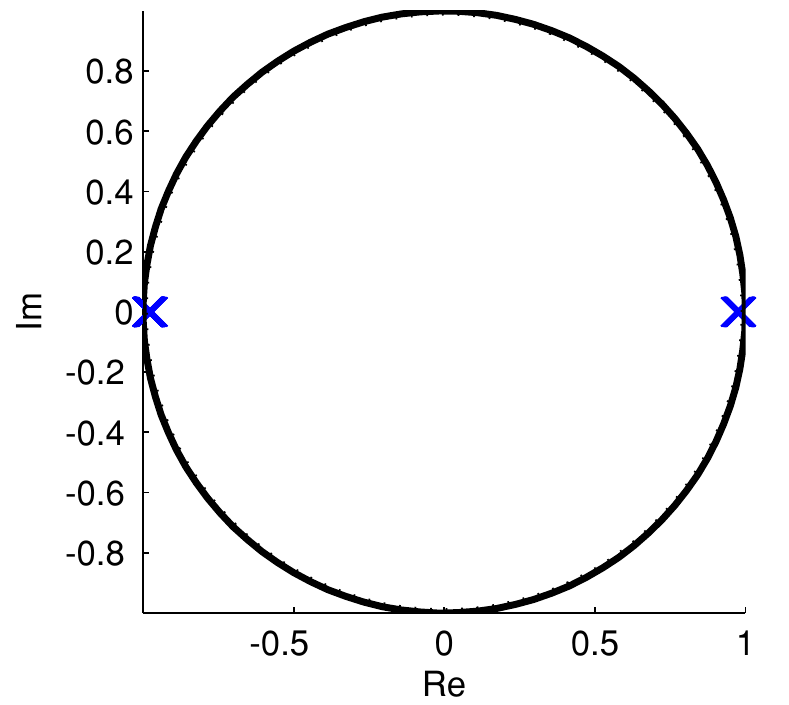}

}\subfloat[]{\includegraphics[width=0.3\columnwidth]{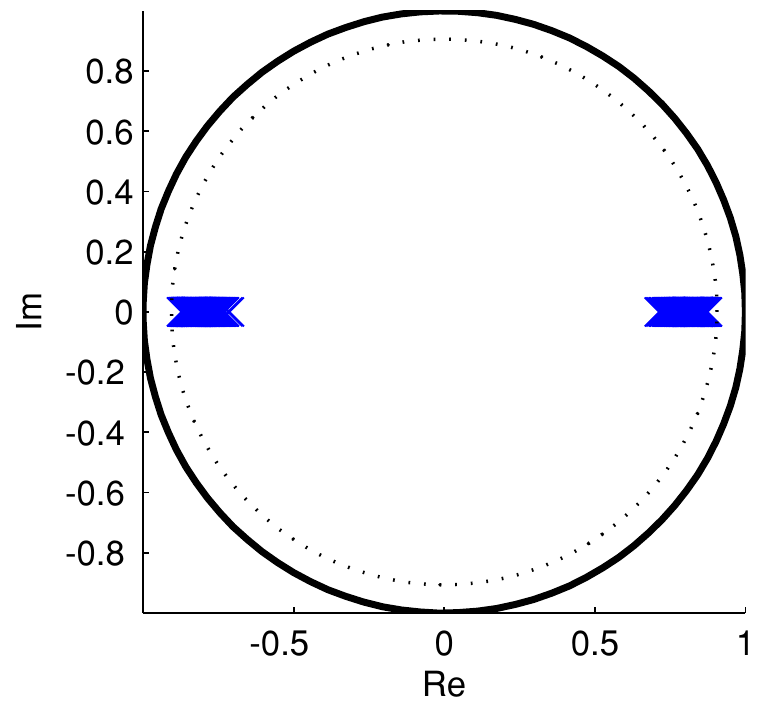}

}\subfloat[]{\includegraphics[width=0.3\columnwidth]{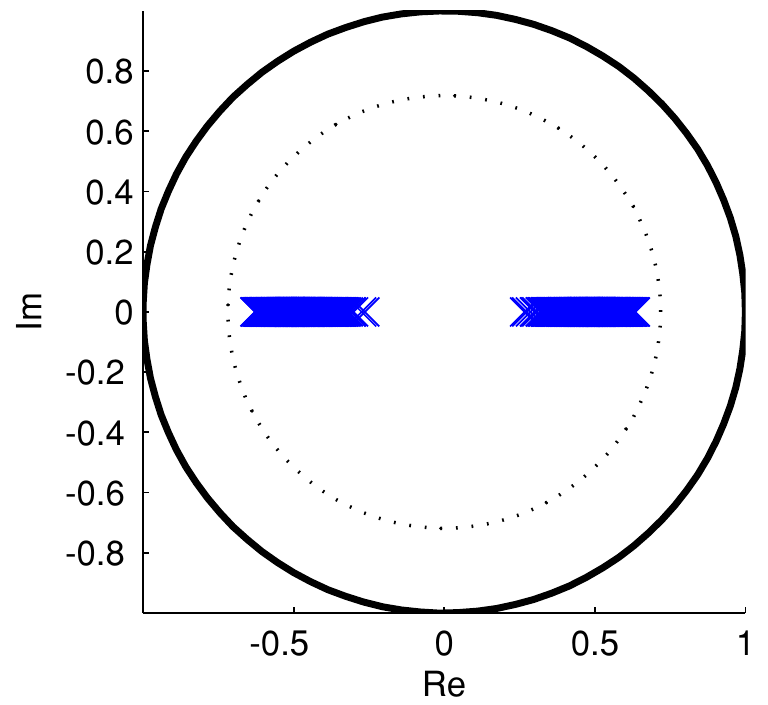}

}

\subfloat[]{\includegraphics[width=0.3\columnwidth]{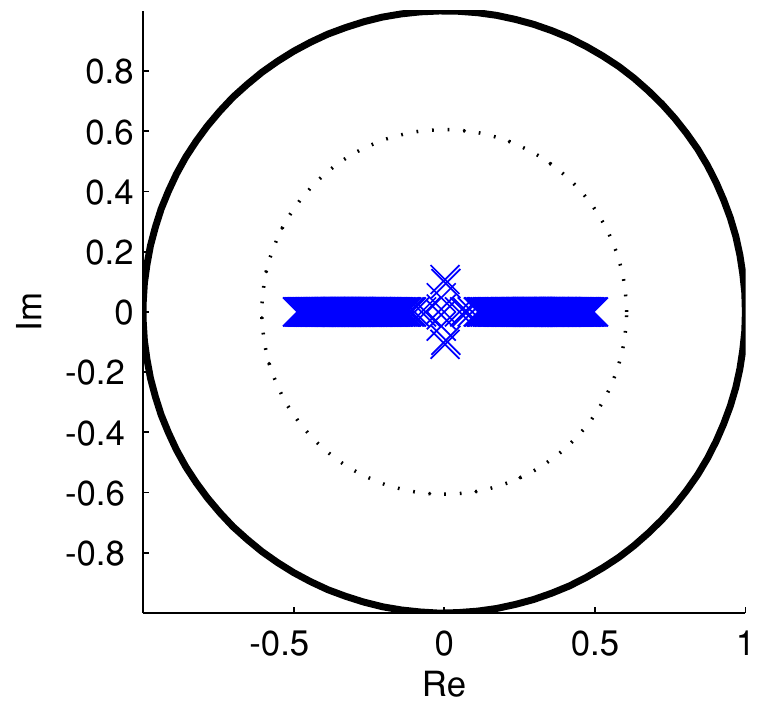}}\subfloat[]{\includegraphics[width=0.3\columnwidth]{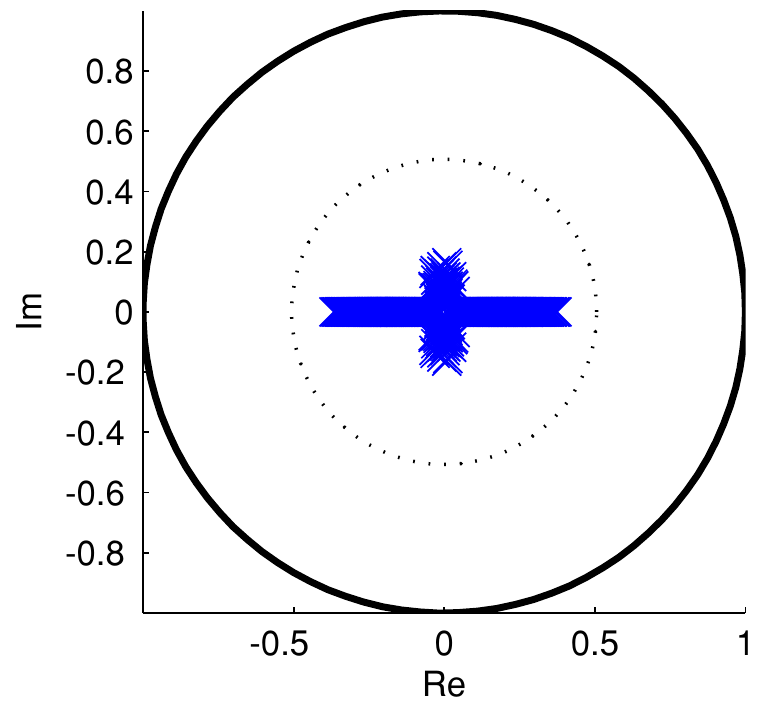}}\subfloat[]{\includegraphics[width=0.3\columnwidth]{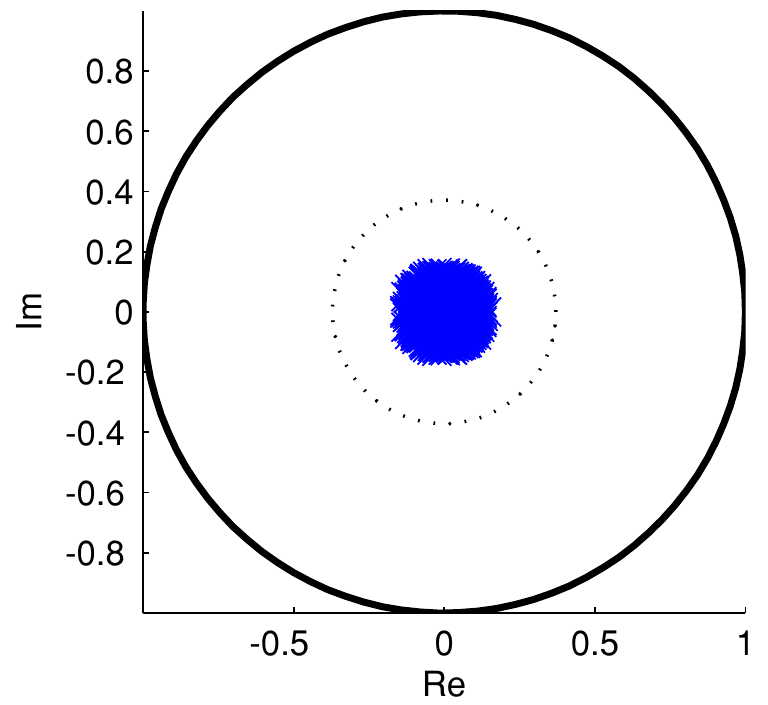}}

\protect\caption{\label{fig:ev_sweep}Eigenvalues (markers) and the spectral norm (dotted
circle) of $K(\beta)$ for a randomly generated problem with $\protect\ny=\protect\nx=1000$,
$\protect\nz=500$, $m=0.49$ and $\ell=2.2$: (a) $\beta=0.01$;
(b) $\beta=0.1$; (c) $\beta=0.33$; (d) $\beta=0.5$; (e) $\beta=0.67$;
(f) $\beta=1$. The unit circle is shown in as a solid circle.}
\end{figure}
The eigenvalues of the iteration matrix $K(\beta)$ play a pivotal
role in driving the convergence of both ADMM as well as ADMM-GMRES.
Figure~\ref{fig:ev_sweep} plots these for a fixed, randomly generated
problem, while sweeping the value of $\beta$. Initially, we see two
clusters of purely-real eigenvalues, tightly concentrated about $\pm1$,
that enlargen and shift closer towards the origin and towards each
other with increasing $\beta$. As the two clusters coalesce, some
of the purely-real eigenvalues become complex. The combined radius
of the two clusters reaches its minimum at around $\beta=\sqrt{m\ell}$,
at which point most of the eigenvalues are complex. Although not shown,
the process is reversed once $\beta$ moves past $\sqrt{m\ell}$;
the two clusters shrink, become purely-real, break apart, and move
away from the origin, ultimately reverting into two clusters concentrated
about $\pm1$.

Three concrete findings can be summarized from these observations.
First, despite the fact that $K(\beta)$ is nonsymmetric, its eigenvalues
are purely-real over a broad range of $\beta$. 
\begin{lem}
\label{lem:eig_real}Let $\beta>\ell$ or $\beta<m$. Then $K(\beta)$
is diagonalizable and its eigenvalues are purely real. Furthermore,
let $\kappa_{X}$ be the condition number for the matrix-of-eigenvectors
as defined in Assumption~\ref{ass:kappaX}. Then this quantity is
bound 
\begin{equation}
\kappa_{X}\le1+\left(\max\left\{ \frac{\beta}{\ell},\frac{m}{\beta}\right\} -1\right)^{-1}.\label{eq:condest}
\end{equation}

\end{lem}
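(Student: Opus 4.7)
The plan is to expose the structure of $K(\beta)$ as a sign matrix times a symmetric matrix, and then invoke the classical fact that a signature times a \emph{definite} symmetric matrix is similar to a real symmetric matrix. Let $V:=[Q\mid P]$, which is orthogonal since $Q$ and $P$ together span $\R^{\ny}$, and let $J:=\mathrm{diag}(I_{\nz},-I_{\ny-\nz})$, so that the leftmost factor of (\ref{eq:Kdef}) is exactly $JV^{T}$. The middle factor $F(\beta):=(\beta^{-1}\tilde D+I)^{-1}-(\beta\tilde D^{-1}+I)^{-1}$ is symmetric, commutes with $\tilde D$, and has eigenvalues $f(\lambda)=(\beta-\lambda)/(\beta+\lambda)$ as $\lambda$ ranges over $\Lambda\{\tilde D\}\subset[m,\ell]$. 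Writing $\tilde F:=V^{T}F(\beta)V$, which is orthogonally congruent to $F(\beta)$ and hence shares all its spectral data, I obtain the compact form $K(\beta)=J\tilde F$.

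Under the hypothesis of the lemma, either $\beta>\ell$, making every $f(\lambda)$ strictly positive and thus $\tilde F$ positive definite, or $\beta<m$, making every $f(\lambda)$ strictly negative and thus $\tilde F$ negative definite. Treating the positive-definite case (the other is identical after a sign flip), I would introduce the similarity
\[
K(\beta) \;=\; \tilde F^{-1/2}\,H\,\tilde F^{1/2},\qquad H:=\tilde F^{1/2}J\tilde F^{1/2},
\]
in which $H$ is manifestly real symmetric. This immediately gives real diagonalizability. Orthogonally diagonalizing $H=U\Lambda U^{T}$, the matrix-of-eigenvectors of $K(\beta)$ may be taken as the columns of $X:=\tilde F^{-1/2}U$, which gives
\[
\kappa_{X}\;\le\;\|\tilde F^{-1/2}\|\,\|\tilde F^{1/2}\|\;=\;\sqrt{\kappa(\tilde F)}\;=\;\sqrt{\kappa(F(\beta))}.
\]

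To finish I would evaluate $\kappa(F(\beta))$ in closed form. Substituting the two extremal eigenvalues $f(m)$ and $f(\ell)$ yields
\[
\kappa(F(\beta))=\frac{(\beta-m)(\beta+\ell)}{(\beta+m)(\beta-\ell)}\quad(\beta>\ell),\qquad \kappa(F(\beta))=\frac{(\ell-\beta)(m+\beta)}{(\ell+\beta)(m-\beta)}\quad(\beta<m).
\]
A direct cross-multiplication shows that each is bounded above by the square of $1+(\max\{\beta/\ell,m/\beta\}-1)^{-1}$. For $\beta>\ell$ the remaining inequality reduces to $\ell^{2}(m-\beta)-2m\beta^{2}\le 0$, which is immediate since both terms are non-positive; for $\beta<m$ it reduces to $\beta^{2}(\beta-\ell)-2\beta m^{2}\le 0$, which is equally immediate. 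Taking square roots gives (\ref{eq:condest}).

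The content is largely formal; the only nontrivial idea is the observation that $K(\beta)=J\tilde F$ and the subsequent $\tilde F^{1/2}$-similarity, a standard device for analyzing signature-times-definite pencils. The closed-form manipulation of $\kappa(F(\beta))$ is routine, but I would double-check that the slack in the bound collapses to the symmetric form $1+(\max\{\beta/\ell,m/\beta\}-1)^{-1}$ rather than to a tighter but uglier expression; the duality $\beta\leftrightarrow m\ell/\beta$ between the two cases confirms that this is the right functional form.
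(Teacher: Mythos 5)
Your proof is correct and takes essentially the same route as the paper: both reduce to the factorization $K=J\tilde{F}$ with $J$ a signature and $\tilde{F}$ definite, and then use the $\tilde{F}^{1/2}$-similarity to a real symmetric matrix (which the paper packages as Proposition~\ref{prop:Jsym_real}) to obtain $\kappa_{X}\le\sqrt{\kappa(\tilde{F})}$. The only cosmetic difference is the final step: you compute $\kappa(\tilde{F})$ exactly and cross-multiply to verify (\ref{eq:condest}), whereas the paper first bounds $\kappa(JK)<\|(JK)^{-1}\|$ and then applies $\sqrt{1+2x}\le 1+x$, arriving at the same inequality.
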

Furthermore, the eigenvalues are partitioned into two distinct, purely-real
clusters that only become complex once they coalesce. 
\begin{lem}
\label{lem:eig_distr}Define the positive scalar $\gamma=\max\{\beta/m,\ell/\beta\}$,
which satisfies $\gamma\ge\sqrt{\kappa}$ by construction. If $\gamma\in[\sqrt{\kappa},\kappa]$,
then $\Lambda(K)$ is enclosed within the union of a disk and an interval:
\begin{equation}
\Lambda(K)\subset\left\{ z\in\C:|z|\le\frac{\kappa}{\gamma+\kappa}-\frac{1}{\gamma+1}\right\} \cup\left[-\frac{\gamma-1}{\gamma+1},+\frac{\gamma-1}{\gamma+1}\right].\label{eq:eig_case1}
\end{equation}
If $\gamma\in(\kappa,2\kappa]$, then $\Lambda(K)$ is enclosed within
a single interval:
\begin{equation}
\Lambda(K)\subset\left[-\frac{\gamma-1}{\gamma+1},+\frac{\gamma-1}{\gamma+1}\right].\label{eq:eig_case2}
\end{equation}
Finally, if $\gamma\in(2\kappa,\infty)$, then $\Lambda(K)$ is enclosed
within the union of two disjoint intervals:
\begin{equation}
\Lambda(K)\subset\left[-\frac{\gamma-1}{\gamma+1},-\frac{\gamma-2\kappa}{\gamma+\kappa}\right]\cup\left[+\frac{\gamma-2\kappa}{\gamma+\kappa},+\frac{\gamma-1}{\gamma+1}\right].\label{eq:eig_case3}
\end{equation}
Furthermore, if $0<\nx<\ny$, then $\Lambda(K)$ contains at least
one eigenvalue within each interval.
\end{lem}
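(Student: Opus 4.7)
The plan is to diagonalize $\tilde D = V\Sigma V^T$ (with $\sigma_i \in [m,\ell]$) and reduce $K(\beta)$ to the product $G\Omega$ of a symmetric orthogonal involution and a real diagonal matrix, at which point the three cases can be read off by standard arguments and one LP-based estimate. The middle factor in the defining formula of $K$ becomes $V\Omega V^T$ with $\Omega = \mathrm{diag}(w_i)$ and $w_i = (\beta-\sigma_i)/(\beta+\sigma_i)$; with $F = \mathrm{diag}(I_{\nz},-I_{\ny-\nz})$ and $H = [Q,P]$, the factorization $K = FH^T W H$ is similar via the orthogonal $O = H^T V$ to $G\Omega$, where $G = O^T F O$ is symmetric orthogonal with inertia $(\nz,\ny-\nz)$ by Sylvester's law. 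Lemma~\ref{lem:Knrm} already places every eigenvalue inside the disk $\{|z|\le (\gamma-1)/(\gamma+1)\}$, which supplies the outer interval appearing in each case.

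In the definite regime ($\beta > \ell$ or $\beta < m$, i.e., $\gamma > \kappa$) every $w_i$ shares a common sign, so $G\Omega$ is similar to the symmetric matrix $\pm|\Omega|^{1/2}G|\Omega|^{1/2}$; hence its spectrum is real, has inertia $(\nz,\ny-\nz)$, and lies in the outer interval, settling Case~2. For Case~3 ($\gamma > 2\kappa$) the extra estimate $|\lambda|\ge 1/\|K^{-1}\|\ge w_{\min}=(\gamma-\kappa)/(\gamma+\kappa)\ge(\gamma-2\kappa)/(\gamma+\kappa)$ excises a neighborhood of zero, and the $(\nz,\ny-\nz)$ inertia guarantees at least one eigenvalue on each side of the origin when $0<\nz<\ny$---which I read as the intended final condition, since $0<\nx<\ny$ would contradict the invertibility of $AA^T$.

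Case~1 (indefinite $\Omega$; $\beta\in[m,\ell]$ and $\gamma\in[\sqrt\kappa,\kappa]$, with the two sub-cases $\beta\gtrless\sqrt{m\ell}$ handled by the same argument up to relabeling) is where the real work happens. The real eigenvalues still fall in the outer interval by the spectral-norm bound. For a complex eigenvalue $\lambda$ with eigenvector $v$, the equation $\Omega v = \lambda Gv$ paired with $\bar v^T$ yields $\sum_i w_i|v_i|^2 = \lambda(\bar v^T Gv)$: the left side is real, and $g := \bar v^T Gv$ is real because $G$ is real symmetric, so $\lambda$ nonreal forces $g = 0$ and therefore $\sum_i w_i|v_i|^2 = 0$. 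Orthogonality of $G$ also gives $|\lambda|^2\|v\|^2 = \|\Omega v\|^2 = \sum_i w_i^2|v_i|^2$, so the probability vector $p_i = |v_i|^2/\|v\|^2$ maximizes the linear functional $\sum w_i^2 p_i$ on the simplex subject to the linear constraint $\sum w_i p_i = 0$. The basic feasible solutions of this LP are supported on exactly two coordinates with oppositely signed $w$'s, producing $|\lambda|^2 \le w_\max^+ \cdot w_\max^- = (\beta-m)(\ell-\beta)/[(\beta+m)(\ell+\beta)]$. The nonnegativity of $(\beta^2-m\ell)^2$, equivalent to $\beta^2(\ell-m)^2 \ge (\beta^2-m^2)(\ell^2-\beta^2)$, then upgrades this bound to $|\lambda|^2 \le r_1^2$ with $r_1 = \beta(\ell-m)/[(\beta+m)(\beta+\ell)] = \kappa/(\gamma+\kappa) - 1/(\gamma+1)$, exactly the stated disk radius.

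The main obstacle is the LP-and-inequality argument for complex eigenvalues in Case~1: spotting that $\bar v^T Gv$ must vanish is the one non-mechanical move in the proof---it is what allows the sign constraint on $\Omega$ to enter the estimate---and observing that the resulting product $w_\max^+ w_\max^-$ is bounded by $r_1^2$ via the square $(\beta^2-m\ell)^2 \ge 0$ is where the hidden geometry finally surfaces. The remainder is bookkeeping with similarity transformations, spectral-norm estimates, and Sylvester's law of inertia.
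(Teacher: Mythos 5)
Your proof is correct and follows a genuinely different route from the paper's. The paper works at the level of $J$-symmetric matrix theory: it invokes Proposition~\ref{prop:Jsym_comp_radius} (complex eigenvalues lie in a disk of radius $\min_{\eta\in\R}\|K+\eta J\|$, obtained via the $J$-neutral field of values and a Lagrangian dual) to get the disk in Case~1, and the block Gershgorin theorem Proposition~\ref{prop:Jsym_Gersh} to split the spectrum into two components in Case~3. You instead push everything through the concrete factorization $K\sim G\Omega$ with $G$ a symmetric orthogonal involution of inertia $(\nz,\ny-\nz)$ and $\Omega=\mathrm{diag}(w_i)$, $w_i=(\beta-\sigma_i)/(\beta+\sigma_i)$, which makes the bounds elementary: Case~2 is an immediate similarity to the symmetric matrix $\pm|\Omega|^{1/2}G|\Omega|^{1/2}$; Case~3 follows from that same similarity, Sylvester's inertia, and the singular-value estimate $|\lambda|\ge 1/\|K^{-1}\|\ge(\gamma-\kappa)/(\gamma+\kappa)$, which is in fact \emph{tighter} than the paper's block-Gershgorin threshold $(\gamma-2\kappa)/(\gamma+\kappa)$; and in Case~1 your linear program on the simplex with constraint $\sum_i w_ip_i=0$ (derived from the vanishing of $\bar v^TGv$) gives $|\lambda|^2\le w^{+}_{\max}w^{-}_{\max}$, and the identity $(\beta^2-m\ell)^2\ge0$ upgrades this geometric mean to the stated arithmetic-mean radius $r_1=\beta(\ell-m)/[(\beta+m)(\beta+\ell)]$, which is exactly the quantity $\min_{\eta}\|\tilde K-\eta I\|$ that the paper computes. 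Your version avoids external results on $J$-symmetric matrices and block Gershgorin entirely, at the cost of more hands-on computation; the paper's version is modular and reuses machinery used elsewhere (e.g.\ Lemma~\ref{lem:eig_real}). Your reading of the final condition as a typo ($0<\nx<\ny$ should be $0<\nz<\ny$, since invertibility of $AA^T$ forces $\nx\ge\ny$) is also correct and consistent with Corollary~\ref{cor:sprad_lb}.
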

In the limits $\beta\to0$ and $\beta\to\infty$, the two clusters
in (\ref{eq:eig_case3}) concentrate about $\pm1$, and the spectral
radius of the ADMM iteration matrix converges towards~1.
\begin{cor}
\label{cor:sprad_lb}Let $\beta>2\ell$ or $\beta<\frac{1}{2}m$.
Then for $0<\nz<\ny$, there exists an eigenvalue of the ADMM iteration
matrix $\lambda_{i}\in\Lambda\{G_{\AD}(\beta)\}$ whose modulus is
lower-bounded 
\[
|\lambda_{i}|\ge\frac{\gamma-\kappa}{\gamma+\kappa},\text{ where }\gamma=\max\left\{ \frac{\beta}{m},\frac{\ell}{\beta}\right\} .
\]

\end{cor}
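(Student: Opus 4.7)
The plan is to reduce everything to the already-characterized spectrum of $K(\beta)$. By Lemma~\ref{lem:blkschur_ad}, the block-Schur decomposition of $G_{\AD}(\beta)$ has diagonal blocks $0_{\nx}$, $G_{22}(\beta) = \tfrac{1}{2}I + \tfrac{1}{2}K(\beta)$, and $0_{\nz}$, so each nonzero eigenvalue of $G_{\AD}(\beta)$ has the form $\lambda = (1+\mu)/2$ for some $\mu \in \Lambda\{K(\beta)\}$. The task then reduces to exhibiting a single real eigenvalue $\mu$ of $K(\beta)$ satisfying $\mu \geq (\gamma - 2\kappa)/(\gamma + \kappa)$, because a direct algebraic manipulation then gives
$$|\lambda| \,=\, \frac{1+\mu}{2} \,\geq\, \frac{2\gamma - \kappa}{2(\gamma+\kappa)} \,\geq\, \frac{\gamma - \kappa}{\gamma + \kappa},$$
where the final inequality collapses to $\kappa \geq 0$.

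Next, I would verify that the hypothesis $\beta > 2\ell$ or $\beta < m/2$ forces $\gamma > 2\kappa$: in the first branch, $\gamma = \beta/m > 2\ell/m = 2\kappa$; in the second, $\gamma = \ell/\beta > 2\kappa$ by a symmetric calculation. This places us squarely in case (\ref{eq:eig_case3}) of Lemma~\ref{lem:eig_distr}, whose dimensional nontriviality clause (supplied here by $0 < \nz < \ny$, which ensures that the orthogonal complement $P$ of the QR decomposition $B=QR$ is nontrivial) guarantees a real eigenvalue of $K(\beta)$ inside the positive subinterval $[(\gamma-2\kappa)/(\gamma+\kappa),\,(\gamma-1)/(\gamma+1)]$. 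Plugging this eigenvalue into the displayed chain furnishes the claim.

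The main obstacle, modest as it is, is a sign subtlety in the selection of $\mu$. Lemma~\ref{lem:eig_distr} equally guarantees an eigenvalue in the symmetric negative subinterval $[-(\gamma-1)/(\gamma+1),\,-(\gamma-2\kappa)/(\gamma+\kappa)]$, but the affine shift $\mu \mapsto (1+\mu)/2$ sends every negative-interval eigenvalue to a \emph{small} positive value of $\lambda$ that can fall strictly below the target bound whenever $\gamma \gg \kappa$. It is therefore essential to invoke the positive-interval eigenvalue specifically; choosing the wrong cluster would not yield the required spectral radius estimate.
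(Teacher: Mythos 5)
Your argument is correct, and it is the route the paper intends (the paper leaves the corollary unproved as an immediate consequence of Lemma~\ref{lem:eig_distr}). You read off from the block-Schur decomposition in Lemma~\ref{lem:blkschur_ad} that every nonzero eigenvalue of $G_{\AD}(\beta)$ is $(1+\mu)/2$ with $\mu\in\Lambda\{K(\beta)\}$, check that $\beta>2\ell$ or $\beta<m/2$ forces $\gamma>2\kappa$, invoke the two-interval case (\ref{eq:eig_case3}) together with its dimensional nontriviality clause to extract a real $\mu\ge(\gamma-2\kappa)/(\gamma+\kappa)$, and push this through the affine map; the elementary slack
\[
\frac{2\gamma-\kappa}{2(\gamma+\kappa)}-\frac{\gamma-\kappa}{\gamma+\kappa}=\frac{\kappa}{2(\gamma+\kappa)}\ge 0
\]
closes the chain.

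Two small remarks. First, you rightly use the condition $0<\nz<\ny$; the statement of Lemma~\ref{lem:eig_distr} says ``$0<\nx<\ny$,'' but inspection of the $J$-symmetric block structure (\ref{eq:Kblks}) shows the nontriviality of both Gershgorin sets is governed by $\nz$, not $\nx$ --- this is a typo in the lemma, and the corollary (and your proof) uses the right quantity. Second, your closing observation about the ``sign subtlety'' is genuine and worth keeping: the affine shift $\mu\mapsto(1+\mu)/2$ breaks the $\pm$ symmetry of the two clusters, and the negative-cluster eigenvalues map to values no larger than $3\kappa/(2(\gamma+\kappa))$, which falls below $(\gamma-\kappa)/(\gamma+\kappa)$ once $\gamma$ exceeds roughly $\tfrac{5}{2}\kappa$. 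So the proof does hinge on selecting an eigenvalue from the positive-real cluster.
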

The spectral radius determines the asympotic convergence rate, so
given Corollary~\ref{cor:sprad_lb}, it is unsurprising that ADMM
stagnates if $\beta$ is poorly chosen. But the situation is different
with ADMM-GMRES, because it is able to exploit the clustering of eigenvalues.
As we will see later, this is the mechanism that allows ADMM-GMRES
to be insensitive to the parameter choice.

\subsection{Properties of $J$-symmetric matrices}

Most of our characterizations for the eigenvalues of $K(\beta)$ are
based on a property known as ``$J$-symmetry''. In the following
discussion, we will drop all arguments with respect to $\beta$ for
clarity. Returning to its definition in (\ref{eq:Kdef}), we note
that $K$ has the block structure
\begin{equation}
K=\begin{bmatrix}X & Z\\
-Z^{T} & Y
\end{bmatrix},\label{eq:Kblks}
\end{equation}
with subblocks $X\in\R^{\nz\times\nz}$, $Y\in\R^{(\ny-\nz)\times(\ny-\nz)}$,
and $Z\in\R^{\nz\times(\ny-\nz)}$, 
\begin{gather}
X=Q^{T}\tilde{K}Q,\qquad Y=-P^{T}\tilde{K}P,\qquad Z=Q^{T}\tilde{K}P,\\
\tilde{K}=(\beta^{-1}\tilde{D}+I)^{-1}-(\beta\tilde{D}^{-1}+I)^{-1},\label{eq:Ktilde_def}
\end{gather}
and the matrices $Q$ and $P$ with orthonormal columns are defined
as in Lemma~\ref{lem:blkschur_ad}. From the block structure in (\ref{eq:Kblks})
we see that the matrix $K$ is self-adjoint with respect to the indefinite
product (assuming $0<\nz<\ny$) defined by $J=\blkdiag(I_{\nz},-I_{(\ny-\nz)})$:
\[
\left\langle y,Mx\right\rangle _{J}=\left\langle My,x\right\rangle _{J}\qquad\iff\qquad y^{T}JMx=(My)^{T}Jx.
\]
Matrices that have this property frequently appear in saddle-point
type problems; cf.~\cite{benzi2005numerical,benzi2006eigenvalues}
for a more detailed treatment of this subject. Much can be said about
their spectral properties.
\begin{prop}
\label{prop:Jsym_comp_radius}The $J$-symmetric matrix $K$ in (\ref{eq:Kblks})
has at most $2\min\{\nz,\ny-\nz\}$ eigenvalues with nonzero imaginary
parts, counting conjugates. These eigenvalues are contained within
the disk $\mathcal{D}_{a}=\{z\in\C:|z|\le a\}$ of radius
\[
a=\min_{\eta\in\R}\|K+\eta J\|,
\]
where $\P$ denotes the space of polynomials.\end{prop}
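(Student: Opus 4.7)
My plan is to exploit the identity $JK = K^T J$, which follows directly from the block form (\ref{eq:Kblks}) together with the symmetries $X = X^T$ and $Y = Y^T$ inherited from $\tilde K = \tilde K^T$ in (\ref{eq:Ktilde_def}). The whole proof hinges on one elementary consequence of this relation: every eigenvector for a non-real eigenvalue is $J$-isotropic. If $Kv = \lambda v$ with $\lambda \notin \R$, I expand $v^{*} J K v$ two ways, directly as $\lambda\, v^{*} J v$, and via $v^{*} K^T J v = (K \bar v)^T J v = \bar\lambda\, v^{*} J v$, using that $K$ is real so $K \bar v = \bar\lambda \bar v$. Equating the two yields $(\lambda - \bar\lambda)\, v^{*} J v = 0$, so $v^{*} J v = 0$.

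The radius estimate then drops out in one line. Because $J$ is orthogonal ($J^2 = I$, $J = J^T$) and $v^{*} J v$ is real, for any real $\eta$
\[
\|(K+\eta J)v\|^2 = |\lambda|^2 \|v\|^2 + 2\,\mathrm{Re}(\bar\lambda \eta)\,(v^{*} J v) + \eta^2 \|v\|^2 = (|\lambda|^2 + \eta^2)\|v\|^2,
\]
so $\|K+\eta J\| \ge |\lambda|$ for every $\eta \in \R$, whence $|\lambda| \le a$. For the eigenvalue count I promote the isotropy to the spectral subspace $V_{+} := \bigoplus_{\mathrm{Im}(\lambda) > 0} E_\lambda$. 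Repeating the computation for $v_1 \in E_\lambda$, $v_2 \in E_\mu$ with $\mathrm{Im}(\lambda), \mathrm{Im}(\mu) > 0$ gives $(\lambda - \bar\mu)\, v_2^{*} J v_1 = 0$; since $\bar\mu$ has negative imaginary part we never hit the degenerate case, so $v_2^{*} J v_1 = 0$ and $V_{+}$ is a $J$-isotropic subspace of $\C^{\ny}$. Since $J$ has signature $(\nz,\ny-\nz)$, any $J$-isotropic subspace has complex dimension at most $\min\{\nz,\ny-\nz\}$. Because $K$ is real, $V_{-} := \overline{V_{+}}$ spans the eigenspaces with $\mathrm{Im}(\lambda) < 0$ and has the same dimension, so the total algebraic multiplicity of non-real eigenvalues is at most $2\min\{\nz,\ny-\nz\}$.

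The one step that needs care is when $K$ fails to be diagonalizable at some complex $\lambda$: then $V_{\pm}$ must be read as generalized, rather than true, eigenspaces. The isotropy identity $v_2^{*} J v_1 = 0$ still holds, but now has to be proved by induction on Jordan-chain length using $J(K - \lambda I) = (K^T - \lambda I) J$ together with the $J$-adjoint relation between chains for $\lambda$ and $\bar\lambda$; alternatively one can cite the Gohberg--Lancaster--Rodman spectral theory of selfadjoint operators in indefinite inner product spaces, which packages this bookkeeping into a standard normal form. I expect this Jordan-structure refinement to be the only nontrivial piece of the argument, since the disk enclosure and the isotropy of simple complex eigenvectors are essentially one-line calculations once $JK = K^T J$ is in hand.
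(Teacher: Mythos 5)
Your proof is correct and rests on the same key fact the paper uses — $J$-neutrality of eigenvectors for non-real eigenvalues — which the paper simply cites from Benzi and Simoncini but you derive directly from $JK=K^TJ$; your radius bound via a direct computation of $\|(K+\eta J)v\|^2$ is an equivalent (and slightly more elementary) rendering of the Lagrangian-dual step in the paper. Your isotropic-subspace dimension argument for the multiplicity count is a sound, self-contained replacement for the paper's citation, and your caveat about generalized eigenspaces is appropriate since the proposition implicitly counts algebraic multiplicities.
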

\begin{proof}
Benzi~\&~Simoncini~\cite{benzi2006eigenvalues} provide a succinct
proof for the first statement. The second statement is based on the
fact that every eigenpair $\{\lambda_{i},x_{i}\}$ of $K$ satisfying
$\mathrm{Im}(\lambda_{i})\ne0$ must have an eigenvector $x_{i}$
that is ``$J$-neutral'', i.e. satisfying $x_{i}^{*}Jx_{i}=0$;
cf.~\cite[Thm. 2.1]{benzi2006eigenvalues}. Hence, the following
bound holds 
\begin{equation}
|\lambda_{i}|\le\max_{\|x\|=1}\{|x^{*}Kx|:x^{*}Jx=0\}\label{eq:Jsym_fov}
\end{equation}
for every $\lambda_{i}$ with $\mathrm{Im}(\lambda_{i})\ne0$. Taking
the Lagrangian dual of (\ref{eq:Jsym_fov}) yields the desired statement.
\end{proof}
Also, we can derive a simple sufficient condition for the eigenvalues
of $K$ to be purely real, based on the ideas described in~\cite{benzi2006eigenvalues}. 
\begin{prop}
\label{prop:Jsym_real}Suppose that there exists a real scalar $\eta\ne0$
to make the matrix $H=\eta JK$ positive definite. Then $K$ is diagonalizable
with eigendecomposition, $K=X\Lambda X^{-1}$, its eigenvalues are
purely-real, and the condition number of the matrix-of-eigenvectors
satisfies $\kappa_{X}\triangleq\|X\|\|X^{-1}\|\le\sqrt{\|H\|\|H^{-1}\|}$\end{prop}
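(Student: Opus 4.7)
The plan is to reduce everything to the observation that $J$-symmetry together with positive definiteness of $H = \eta JK$ forces $K$ to be similar to a symmetric matrix. First I would note that $J$-symmetry of $K$, as stated just before Proposition~\ref{prop:Jsym_real}, means $JK = K^T J$; hence $(JK)^T = JK$, so $JK$ and therefore $H = \eta JK$ are automatically symmetric. The genuine content of the hypothesis is thus the positive definiteness of $H$.

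Next I would let $S = H^{1/2}$ denote the unique symmetric positive-definite square root of $H$, and use $J^2 = I$ (so that $J^{-1} = J$) to rewrite
\[
K \;=\; \eta^{-1} J H \;=\; \eta^{-1} J S^2.
\]
The key computation is the similarity transform
\[
S K S^{-1} \;=\; \eta^{-1}\, S J S,
\]
whose right-hand side is manifestly real and symmetric since $S^T = S$ and $J^T = J$. An orthogonal spectral decomposition $SKS^{-1} = V\Lambda V^T$ with $\Lambda$ real diagonal then yields $K = X \Lambda X^{-1}$ with $X = S^{-1} V$, which simultaneously proves diagonalizability and realness of the spectrum.

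For the condition-number bound I would exploit orthogonality of $V$ to estimate $\|X\| = \|S^{-1} V\| \le \|S^{-1}\|$ and $\|X^{-1}\| = \|V^T S\| \le \|S\|$, from which
\[
\kappa_X \;\le\; \|S\|\,\|S^{-1}\| \;=\; \sqrt{\|H\|}\,\sqrt{\|H^{-1}\|} \;=\; \sqrt{\|H\|\,\|H^{-1}\|},
\]
where the middle equality uses $\|H^{1/2}\| = \sqrt{\|H\|}$ and $\|H^{-1/2}\| = \sqrt{\|H^{-1}\|}$, both of which follow from the fact that $H^{1/2}$ shares eigenvectors with $H$ and has spectrum $\{\sqrt{\lambda_i(H)}\}$.

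I do not anticipate any real obstacle: the argument is elementary once the similarity $S = H^{1/2}$ has been identified. The only mildly subtle point is choosing the conjugating matrix, and $H^{1/2}$ is the natural candidate because its square absorbs the $H$ appearing in $K = \eta^{-1} J H$, leaving behind a symmetric expression $\eta^{-1} S J S$ sandwiched between copies of $S$. Everything else is bookkeeping with submultiplicativity of the spectral norm.
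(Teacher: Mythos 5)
Your proof is correct and follows essentially the same route as the paper: both use the symmetric positive-definite square root $W = H^{1/2}$ to exhibit $K$ as similar to a real symmetric matrix, then read off diagonalizability, realness of the spectrum, and the $\kappa_X$ bound from $X = W^{-1}V$ with $V$ orthogonal. Your writeup is a bit more explicit about deriving the $\kappa_{X}\le\sqrt{\|H\|\|H^{-1}\|}$ estimate, which the paper states but leaves to the reader.
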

\begin{proof}
It is easy to verify that $K$ is also symmetric with respect to $H$,
as in $KM=K^{T}H$. Since $H$ is positive definite, there exists
a symmetric positive definite matrix $W=W^{T}$ satisfying $W^{2}=H$,
and the $H$-symmetry implies
\[
W(WMW^{-1})W=W(W^{-1}M^{T}W)W\quad\iff\quad WMW^{-1}=(WMW^{-1})^{T}=\tilde{M}.
\]
Hence we conclude that $M$ is similar to the real symmetric matrix
$\tilde{M}$, with purely-real eigenvalues and eigendecomposition
$\tilde{M}=V\Lambda V^{T}$, where $V$ is orthogonal. The corresponding
eigendecomposition for $M$ is $M=X\Lambda X^{-1}$ with $X=W^{-1}V$. 
\end{proof}
Finally, we may use the block-generalization of Gershgorin's circle
theorem to decide when the off-diagonal block $Z$ is sufficiently
``small'' such that the eigenvalues of $K$ become similar to the
block diagonal matrix $\blkdiag(X,Y)$.
\begin{prop}
\label{prop:Jsym_Gersh}Given $J$-symmetric matrix $K$ in (\ref{eq:Kblks}),
define the two Gershgorin sets
\[
\mathcal{G}_{X}=\bigcup_{i=1}^{n}\{z\in\C:|z-\lambda_{i}(X)|\le\|Z\|\},\qquad\mathcal{G}_{Y}=\bigcup_{i=1}^{m}\{z\in\C:|z-\lambda_{i}(Y)|\le\|Z\|\}.
\]
Then $\Lambda\{K\}\subset\mathcal{G}_{X}\cup\mathcal{G}_{Y}$. Moreover,
if $\mathcal{G}_{X}$ and $\mathcal{G}_{Y}$ are disjoint, i.e. $\mathcal{G}_{X}\cap\mathcal{G}_{Y}=\emptyset$,
then $\Lambda\{K\}$ contains exactly $\nz$ eigenvalues in $\mathcal{G}_{X}$
and $\ny-\nz$ eigenvalues in $\mathcal{G}_{Y}$.\end{prop}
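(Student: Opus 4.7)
The plan is to view this as a block Gershgorin result, exploiting the fact that the diagonal blocks $X$ and $Y$ are \emph{symmetric} (since $\tilde{K}$ in (\ref{eq:Ktilde_def}) is symmetric), so that resolvent norms like $\|(X-\lambda I)^{-1}\|$ equal the reciprocal of the distance from $\lambda$ to the spectrum of the corresponding block. I would treat the inclusion claim and the multiplicity claim separately.

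\smallskip

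For the inclusion, I would fix an eigenvalue $\lambda \in \Lambda\{K\}$ with eigenvector $[u;v]$, and split into cases. If $\lambda \in \Lambda\{X\} \cup \Lambda\{Y\}$, the conclusion $\lambda \in \mathcal{G}_X \cup \mathcal{G}_Y$ is immediate. Otherwise, the blockwise eigenvalue equations
\[
(X-\lambda I)u = -Zv, \qquad (Y-\lambda I)v = Z^T u
\]
can be combined into $u = -(X-\lambda I)^{-1} Z (Y-\lambda I)^{-1} Z^T u$, which forces
\[
\|(X-\lambda I)^{-1}\| \, \|(Y-\lambda I)^{-1}\| \, \|Z\|^2 \;\ge\; 1.
\]
Using symmetry of $X$ and $Y$ to rewrite the resolvent norms as $1/\min_i|\lambda-\lambda_i(X)|$ and $1/\min_j|\lambda-\lambda_j(Y)|$, this inequality rearranges to
\[
\min_i|\lambda-\lambda_i(X)| \cdot \min_j|\lambda-\lambda_j(Y)| \;\le\; \|Z\|^2,
\]
and hence at least one of the two factors is at most $\|Z\|$, placing $\lambda$ in $\mathcal{G}_X$ or $\mathcal{G}_Y$.

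\smallskip

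For the counting statement, I would run a homotopy argument on the family
\[
K(t) = \begin{bmatrix} X & tZ \\ -tZ^T & Y \end{bmatrix}, \qquad t \in [0,1],
\]
noting that at $t=0$ the eigenvalues are precisely the $\nz$ eigenvalues of $X$ (in $\mathcal{G}_X$, by construction) and the $\ny-\nz$ eigenvalues of $Y$ (in $\mathcal{G}_Y$), counted with algebraic multiplicity. Applying the inclusion step just proved to $K(t)$ gives $\Lambda\{K(t)\} \subset \mathcal{G}_X(t) \cup \mathcal{G}_Y(t)$, where the $t$-indexed Gershgorin sets have shrunken radius $t\|Z\|$ and therefore sit inside $\mathcal{G}_X \cup \mathcal{G}_Y$ for all $t \in [0,1]$. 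The disjointness hypothesis $\mathcal{G}_X \cap \mathcal{G}_Y = \emptyset$ then forces $\mathcal{G}_X(t) \cap \mathcal{G}_Y(t) = \emptyset$ along the whole homotopy. Because eigenvalues depend continuously on $t$, no eigenvalue can jump between the two disjoint closed regions, so the algebraic multiplicities in each region are preserved from $t=0$ to $t=1$.

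\smallskip

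The inclusion step is entirely routine; the only place that needs a little care is the counting, where I must be explicit that the homotopy Gershgorin sets stay disjoint for \emph{every} $t \in [0,1]$ (not just the endpoints) in order to invoke continuity of eigenvalues with multiplicity. I do not expect any serious obstacle beyond this bookkeeping, since the structure of $K$ is preserved along the homotopy and the symmetry of the diagonal blocks never changes.
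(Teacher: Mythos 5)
Your proof is correct and essentially self-contained; the paper, by contrast, dispatches this in one line by citing the block Gershgorin theorem of Feingold and Varga for matrices with normal pivot blocks. What you have written is, in effect, the standard proof of that cited theorem specialized to this $2\times 2$ block case: the resolvent inequality $\|(X-\lambda I)^{-1}\|\,\|(Y-\lambda I)^{-1}\|\,\|Z\|^2\ge 1$ combined with the exact formula for resolvent norms of symmetric blocks gives the inclusion, and the homotopy $K(t)$ plus continuity of eigenvalues gives the count. So the approaches are the same in substance, the only difference being that you prove the ingredient the paper imports. Two small points worth making explicit in a polished write-up: (i) in the inclusion step you must observe that $u\ne 0$ (else $(Y-\lambda I)v=0$ with $v\ne 0$ would put $\lambda\in\Lambda\{Y\}$), so the norm inequality is not vacuous; and (ii) in the homotopy step, the containment $\mathcal{G}_X(t)\subset\mathcal{G}_X$ and $\mathcal{G}_Y(t)\subset\mathcal{G}_Y$ indeed follows from the radius $t\|Z\|\le\|Z\|$, which makes the disjointness along the whole path immediate, as you noted. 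What you gain over the paper's citation is a fully elementary argument; what the citation buys is brevity and generality (Feingold--Varga handles arbitrary normal, not merely symmetric, pivot blocks and arbitrary block partitions).
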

\begin{proof}
This is a straightforward application of the block Gershgorin's theorem
for matrices with normal pivot blocks~\cite[ Thm. 4]{feingold1962block}.
\end{proof}

\subsection{Proof of Lemma~\ref{lem:eig_real}}
\begin{proof}
The cases of $\nz=0$ and $\nz=\ny$ are trivial. In the remaining
cases, $K$ is $J$-symmetric, and we will use Proposition~\ref{prop:Jsym_real}
to prove the statement. Noting that $JK$ is unitarily similar with
$\tilde{K}$, simple direct computation reveals that $+JK$ is positive
definite for $\beta>\ell$, and $-JK$ is positive definite for $\beta<m$.
Hence, for these choices of $\beta$, the eigenvalues of $K$ are
purely real. Some further computation reveals that $\|JK\|=(\gamma-1)/(\gamma+1)$
and $\|(JK)^{-1}\|=(\gamma+\kappa)/(\gamma-\kappa)$, so the condition
number of $JK$ is bound 
\begin{equation}
\|JK\|\|(JK)^{-1}\|<\|(JK)^{-1}\|=\frac{\gamma+\kappa}{\gamma-\kappa}=1+\frac{2}{\gamma/\kappa-1}.
\end{equation}
Taking the square-root and substituting $\sqrt{1+2x}\le1+x$ yields
the desired estimate for $\kappa_{X}$ in (\ref{eq:condest}).
\end{proof}

\subsection{Proof of Lemma~\ref{lem:eig_distr}}

\begin{proof}
Again, the cases of $\nz=0$ and $\nz=\ny$ are trivial. In all remaining
cases, $K$ is $J$-symmetric. The single interval case (\ref{eq:eig_case2})
is a trivial consequence of our purely-real result in Lemma~\ref{lem:eig_real}.
The disk-and-interval case (\ref{eq:eig_case1}) arises from Proposition~\ref{prop:Jsym_comp_radius},
in which we use the disk of radius
\[
a=\min_{\eta\in\R}\|K-\eta J\|=\min_{\eta\in\R}\|\tilde{K}-\eta I\|=\frac{\kappa}{\gamma+\kappa}-\frac{1}{\gamma+1}
\]
to enclose the eigenvalues with nonzero imaginary parts, and the spectral
norm disk $|\lambda_{i}(K)|\le\|K\|$ to enclose the purely-real eigenvalues.
The two-interval case (\ref{eq:eig_case3}) is a consequence of the
block Gershgorin theorem in Proposition~\ref{prop:Jsym_Gersh}. Some
direct computation on the block matrices in (\ref{eq:Kblks}) yields
the following two statements 
\begin{align}
\Lambda\{X\}\cup\Lambda\{Y\} & \subset\left[-\frac{\gamma-1}{\gamma+1},-\frac{\gamma-\kappa}{\gamma+\kappa}\right]\cup\left[+\frac{\gamma-\kappa}{\gamma+\kappa},+\frac{\gamma-1}{\gamma+1}\right]\subset\R,\label{eq:lem_clust_2}\\
\|Z\| & \le\frac{\kappa}{\gamma+\kappa},\label{eq:lem_clust_3}
\end{align}
the latter of which also uses the fact that $\|Q^{T}\tilde{K}P\|=\|Q^{T}(\tilde{K}-\eta I)P\|\le\|\tilde{K}-\eta I\|$.
The projections of the corresponding Gershgorin regions onto the real
line satisfy
\[
\mathrm{Re}\{\mathcal{G}_{-}\}\subset\left[-\frac{\gamma-1}{\gamma+1}-\frac{\kappa}{\gamma+\kappa},-\frac{\gamma-2\kappa}{\gamma+\kappa}\right],\quad\mathrm{Re}\{\mathcal{G}_{+}\}\subset\left[\frac{\gamma-2\kappa}{\gamma+\kappa},\frac{\gamma-1}{\gamma+1}+\frac{\kappa}{\gamma+\kappa}\right].
\]
Once $\gamma>2\kappa$, the regions become separated. Taking the intersection
between each disjoint Gershgorin region, the real line, and the spectral
norm disk $|\lambda_{i}(K)|\le\|K\|$ yields (\ref{eq:eig_case3}).
\end{proof}

\section{\label{sec:approx_probs}Solving the Approximation Problems}

With the distribution of eigenvalues characterized in Lemma~\ref{lem:eig_distr},
we now consider solving each of the three accompanying approximation
problems in their most general form.

\subsection{Chebyshev approximation over the real interval}

The optimal approximation for an interval over the real line has a
closed-form solution due to a classic result attributed to Chebyshev.
\begin{thm}
\label{thm:seg_estim}Let $\mathcal{I}$ denote the interval $[c-a,c+a]$
on the real line. Then assuming that $+1\notin\mathcal{I}$, the polynomial
approximation problem has closed-form solution
\begin{equation}
\min_{\begin{subarray}{c}
p\in\P_{k}\\
p(1)=1
\end{subarray}}\max_{z\in\mathcal{I}}|p(z)|=\frac{1}{|T_{k}(\frac{1-c}{a})|}\le2\left(\frac{\sqrt{\kappa_{I}}-1}{\sqrt{\kappa_{I}}+1}\right)^{k},
\end{equation}
where $T_{k}(z)$ is the degree-$k$ Chebyshev polynomial of the first
kind, and $\kappa_{I}=(|1-c|+a)/(|1-c|-a)$ is the condition number
for the interval. The minimum is attained by the Chebyshev polynomial
$p^{\star}(z)=T_{k}(\frac{z-c}{a})/|T_{k}(\frac{1-c}{a})|$.\end{thm}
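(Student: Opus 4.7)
The plan is to reduce the problem to the canonical Chebyshev approximation on $[-1,1]$ by an affine change of variable, exhibit the scaled Chebyshev polynomial as the minimizer, verify its optimality by the classical equioscillation argument, and then translate the resulting value $1/|T_k(w)|$ into the condition-number bound. Throughout, let $\phi(z)=(z-c)/a$ be the affine map sending $\mathcal{I}=[c-a,c+a]$ to $[-1,1]$, and set $w=\phi(1)=(1-c)/a$. Since $1\notin\mathcal{I}$, we have $|w|>1$. The substitution $p(z)=\tilde{p}(\phi(z))$ is a bijection of degree-$k$ polynomials, and the normalization $p(1)=1$ is equivalent to $\tilde{p}(w)=1$. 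Thus
\[
\min_{\substack{p\in\P_{k}\\ p(1)=1}}\max_{z\in\mathcal{I}}|p(z)|
=\min_{\substack{\tilde{p}\in\P_{k}\\ \tilde{p}(w)=1}}\max_{\zeta\in[-1,1]}|\tilde{p}(\zeta)|.
\]

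For the achievability half, I take the candidate $\tilde{p}^{\star}(\zeta)=T_{k}(\zeta)/T_{k}(w)$; since $|T_{k}(\zeta)|\le1$ on $[-1,1]$ and $\tilde{p}^{\star}(w)=1$, this polynomial attains the value $1/|T_{k}(w)|$. For optimality, I argue by contradiction via equioscillation: if some admissible $\tilde{q}$ achieved $M<1/|T_{k}(w)|$ on $[-1,1]$, then at the $k+1$ Chebyshev nodes $\zeta_{j}=\cos(j\pi/k)$, where $T_{k}(\zeta_{j})=(-1)^{j}$, the value of $\tilde{p}^{\star}(\zeta_{j})$ is $(-1)^{j}/T_{k}(w)$, whose magnitude strictly dominates $|\tilde{q}(\zeta_{j})|\le M$. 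Hence $\tilde{p}^{\star}-\tilde{q}$ has the same sign as $(-1)^{j}/T_{k}(w)$ at each $\zeta_{j}$, producing $k$ sign changes on $[-1,1]$ and therefore $k$ roots in that interval. Together with the root at $w\notin[-1,1]$ forced by $\tilde{p}^{\star}(w)=\tilde{q}(w)=1$, this gives at least $k+1$ roots of a nonzero polynomial of degree at most $k$, a contradiction.

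It remains to convert the optimal value $1/|T_{k}(w)|$ into the exponential rate in the statement. Using the representation $T_{k}(w)=\tfrac{1}{2}(\xi^{k}+\xi^{-k})$, valid for $|w|>1$ with $\xi=|w|+\sqrt{w^{2}-1}>1$ (the sign of $w$ only affects $T_{k}(w)$ up to a factor of $(-1)^{k}$, which is immaterial in modulus), I obtain $|T_{k}(w)|\ge\tfrac{1}{2}\xi^{k}$, hence $1/|T_{k}(w)|\le 2\xi^{-k}$. A direct computation from $|w|=|1-c|/a$ gives
\[
w^{2}-1=\frac{(|1-c|-a)(|1-c|+a)}{a^{2}},
\qquad
\xi=\frac{|1-c|+\sqrt{(|1-c|-a)(|1-c|+a)}}{a},
\]
and substituting $\kappa_{I}=(|1-c|+a)/(|1-c|-a)$ into the right-hand side simplifies to $\xi=(\sqrt{\kappa_{I}}+1)/(\sqrt{\kappa_{I}}-1)$, which yields the claimed bound.

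The conceptually delicate step is the equioscillation argument, since one must count roots carefully and handle the boundary case where $\tilde{p}^{\star}-\tilde{q}$ vanishes (rather than merely changes sign) at some $\zeta_{j}$; the standard remedy is to count these as multiplicity-two roots, which preserves the contradiction. All other steps are routine calculation or substitution.
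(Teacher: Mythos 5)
Your proof is correct and is exactly the classical argument for the Chebyshev extremal property that the paper delegates to Rivlin rather than proving: affine reduction to $[-1,1]$, achievability via the scaled Chebyshev polynomial, optimality via the root-counting/equioscillation contradiction, and the algebraic identity $|w|+\sqrt{w^2-1}=(\sqrt{\kappa_I}+1)/(\sqrt{\kappa_I}-1)$ for $|w|=|1-c|/a$.

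Two small remarks. First, the ``delicate'' boundary case you flag cannot occur under your hypotheses: you assumed strictly $M<1/|T_k(w)|$, so $|\tilde{q}(\zeta_j)|\le M<|\tilde{p}^\star(\zeta_j)|$ forces $\tilde{p}^\star(\zeta_j)\neq\tilde{q}(\zeta_j)$ at every extremum $\zeta_j$; no multiplicity accounting is needed to obtain the $k$ sign changes. (Multiplicity counting is only required if one wants the stronger uniqueness of the minimizer, which the theorem does not assert.) Second, your normalization $\tilde{p}^\star(\zeta)=T_k(\zeta)/T_k(w)$ is the right one; the paper's displayed $p^\star(z)=T_k(\tfrac{z-c}{a})/|T_k(\tfrac{1-c}{a})|$ does not in general satisfy $p^\star(1)=1$ (e.g. when $w<-1$ and $k$ is odd, $T_k(w)<0$), though this sign discrepancy is immaterial to the claim about the maximum modulus.
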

\begin{proof}
See e.g.~\cite{rivlin1974chebyshev}.
\end{proof}
Whereas approximating a general $\kappa$-conditioned region to $\epsilon$-accuracy
requires an order $O(\kappa\log\epsilon^{-1})$ polynomial, approximating
a real interval of the same conditioning and to the same accuracy
only requires an order $O(\sqrt{\kappa}\log\epsilon^{-1})$ polynomial.
This is the underlying mechanism that grants the conjugate gradients
method a square-root factor speed-up over gradient descent; cf.~\cite[Ch. 3]{greenbaum1997iterative}
for a more detailed discussion. For future reference, we also note
the following identity.
\begin{rem}
\label{rem:cheb_bounds}Given any $\zeta>+1$, define the corresponding
condition number as $\nu=(\zeta+1)/(\zeta-1)$. Then
\begin{equation}
|\zeta|^{k}=\left(\frac{\nu+1}{\nu-1}\right)^{k},\qquad\frac{1}{2}\left(\frac{\sqrt{\nu}+1}{\sqrt{\nu}-1}\right)^{k}\le|T_{k}(\zeta)|\le\left(\frac{\sqrt{\nu}+1}{\sqrt{\nu}-1}\right)^{k}.
\end{equation}

\end{rem}

\subsection{Real intervals symmetric about the imaginary axis}

Now, consider the polynomial approximation problem for two real, non-overlapping
intervals with respect to the constraint point $+1$, illustrated
in Fig.~\ref{fig:segseg}, which arises as the eigenvalue distribution
(\ref{eq:eig_case3}) in Lemma~\ref{lem:eig_distr}. 
\begin{lem}
\label{lem:seg_seg}Given $a\ge0$ and $c\ge a$, define the two closed
intervals 
\begin{equation}
\mathcal{I}_{-}=\{z\in\R:|z+c|\le a\},\qquad\mathcal{I}_{+}=\{z\in\R:|z-c|\le a\},
\end{equation}
such that $+1\notin I_{+}$ and $\mathcal{I}_{-}\cap\mathcal{I}_{+}=\emptyset$.
Then the following holds
\begin{equation}
\left(\frac{\sqrt{\kappa_{+}}+1}{\sqrt{\kappa_{+}}-1}\right)^{k}\le\min_{\begin{subarray}{c}
p\in\P_{k}\\
p(1)=1
\end{subarray}}\max_{z\in\mathcal{I}_{-}\cup\mathcal{I}_{+}}|p(z)|\le2\left(\frac{\sqrt{\kappa_{+}}+1}{\sqrt{\kappa_{+}}-1}\right)^{0.317\, k}
\end{equation}
where $\kappa_{+}=(1-c+a)/(1-c-a)$ is the condition number for the
segment $\mathcal{I}_{+}$.
\end{lem}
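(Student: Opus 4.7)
The plan is to establish the lower and upper bounds by separate arguments, both reducing to single-interval Chebyshev approximation via Theorem~\ref{thm:seg_estim}.

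For the lower bound, I would observe that restricting the $\max$ to $\mathcal{I}_+\subseteq\mathcal{I}_-\cup\mathcal{I}_+$ can only decrease it, so the minimum over the union is bounded below by the Chebyshev minimum on the single interval $\mathcal{I}_+$. Theorem~\ref{thm:seg_estim} computes this minimum exactly as $1/|T_k(\zeta_+)|$ with $\zeta_+=(1-c)/a$. Since $(\zeta_++1)/(\zeta_+-1)=(1-c+a)/(1-c-a)=\kappa_+$, the upper bound on $|T_k|$ recorded in Remark~\ref{rem:cheb_bounds} immediately yields the desired lower bound on the approximation error.

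For the upper bound, I would exploit the $z\mapsto-z$ symmetry of $\mathcal{I}_-\cup\mathcal{I}_+$ through the substitution $w=z^2$, under which both intervals collapse onto the single image interval $\mathcal{W}=[(c-a)^2,(c+a)^2]$. Any even polynomial of the form $p(z)=q(z^2)$ with $\deg q=\lfloor k/2\rfloor$ has $\deg p\le k$ and satisfies $p(1)=1$ whenever $q(1)=1$. Applying Theorem~\ref{thm:seg_estim} to $q$ on $\mathcal{W}$ produces a polynomial with
\[
\max_{z\in\mathcal{I}_-\cup\mathcal{I}_+}|p(z)|=\max_{w\in\mathcal{W}}|q(w)|\le 2\left(\frac{\sqrt{\kappa_w}-1}{\sqrt{\kappa_w}+1}\right)^{\lfloor k/2\rfloor},
\]
where a direct computation gives $\kappa_w=(1-(c-a)^2)/(1-(c+a)^2)$, and hence the identity $\kappa_w/\kappa_+=(1+c-a)/(1+c+a)$. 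Over the admissible geometry $0\le a<c$ with $c+a<1$, this ratio takes values in $(1/2,1]$ with infimum $1/2$, giving the uniform lower estimate $\kappa_w\ge\kappa_+/2$.

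The final step is to convert the Chebyshev rate governed by $\kappa_w$ into one governed by $\kappa_+$, which amounts to verifying an inequality of the form
\[
\left(\frac{\sqrt{\kappa_w}-1}{\sqrt{\kappa_w}+1}\right)^{1/2}\le\left(\frac{\sqrt{\kappa_+}-1}{\sqrt{\kappa_+}+1}\right)^{\alpha}
\]
uniformly for all $\kappa_+\ge 1$ and $\kappa_w\in[\kappa_+/2,\kappa_+]$, where $\alpha$ is the numerical constant appearing in the statement. I expect this to be the main obstacle: an asymptotic calculation as $\kappa_+\to\infty$ with $\kappa_w=\kappa_+/2$ yields a sharper exponent of $1/\sqrt{2}\approx 0.707$, so the quoted constant is conservative and must be tuned to accommodate the regime of small-to-moderate $\kappa_+$, where the two-sided bounds on $\kappa_w/\kappa_+$ interact less favorably with the map $\kappa\mapsto(\sqrt{\kappa}-1)/(\sqrt{\kappa}+1)$. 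Once this pointwise inequality is established (by reducing to a one-variable calculus problem in $\kappa_+$), combining it with the Chebyshev estimate above completes the upper bound and hence the lemma.
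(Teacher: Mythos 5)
Your lower-bound argument coincides with the paper's: restrict to $\mathcal{I}_+$ and invoke Theorem~\ref{thm:seg_estim} with the Chebyshev estimate from Remark~\ref{rem:cheb_bounds}.

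The upper-bound argument is genuinely different, and it is both cleaner and sharper than the paper's. The paper builds its trial polynomial as a \emph{product} of an order-$\eta$ monomial anchored at $-c$ (to damp $\mathcal{I}_-$) and an order-$\xi$ Chebyshev polynomial on $\mathcal{I}_+$, then balances $\eta/\xi$ via the quotient $f(\sqrt{\nu})/f(\nu)$ of Lemma~\ref{lem:ratio_monotonous}; the constant $0.317$ emerges from bounding that quotient at the extreme $a=c$, giving $\xi/k\ge 1/3.151$. You instead exploit the $z\mapsto -z$ symmetry of $\mathcal{I}_-\cup\mathcal{I}_+$ through the even polynomial $p(z)=q(z^2)$, so that a \emph{single} Chebyshev problem on the image interval $\mathcal{W}=[(c-a)^2,(c+a)^2]$ suffices. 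This gives the rate $2\rho_w^{\lfloor k/2\rfloor}$ with $\rho_w=(\sqrt{\kappa_w}-1)/(\sqrt{\kappa_w}+1)$, whose effective exponent per degree is $\approx 1/2$ rather than $0.317$.

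However, your closing paragraph is confused about the direction of the final comparison, and the difficulty you anticipate does not exist. You correctly compute $\kappa_w/\kappa_+=(1+c-a)/(1+c+a)\le 1$, so $\kappa_w\le\kappa_+$ always. Since $\kappa\mapsto(\sqrt{\kappa}-1)/(\sqrt{\kappa}+1)$ is monotone increasing, this gives $\rho_w\le\rho_+$ directly, and since $\rho_+<1$ and $\lfloor k/2\rfloor\ge 0.317k$ for every $k\ge 2$, the chain $2\rho_w^{\lfloor k/2\rfloor}\le 2\rho_+^{\lfloor k/2\rfloor}\le 2\rho_+^{0.317k}$ closes the proof immediately. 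The ``inequality to be verified'' that you single out as the main obstacle is just $\rho_+^{1/2}\le\rho_+^{\alpha}$ for $\alpha\le 1/2$, which is trivial; the endpoint $\kappa_w=\kappa_+/2$ that your asymptotic calculation focuses on is the \emph{easy} case (smaller $\kappa_w$ means faster decay), not a regime that requires tuning. Once you correct this, the argument is complete for $k\ge 2$; the isolated case $k=1$ is problematic in both your proof and the paper's (e.g.\ as $a\to 0$ the stated bound degenerates to zero while the true degree-one min-max remains $c>0$), but this does not affect the downstream use of the lemma in the asymptotic iteration estimates.
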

Of course, the union of the two intervals, i.e. $\mathcal{I}_{-}\cup\mathcal{I}_{+}$,
lies within a single real interval with condition number $\kappa_{I}=(1+c+a)/(1-c-a)$,
so Theorem~\ref{thm:seg_estim} can also be used to obtain an estimate.
However, the explicit treatment of clustering in Lemma~\ref{lem:seg_seg}
yields a considerably tighter bound, because it is entirely possible
for each $\mathcal{I}_{-}$ and $\mathcal{I}_{+}$ to be individually
well-conditioned while admitting an extremely ill-conditioned union.
For a concrete example, consider setting $a=0$ and taking the limit
$c\to1$; the condition number for $\mathcal{I}_{+}$ is fixed at
$\kappa_{+}=1$, but the condition number for the union $\mathcal{I}_{-}\cup\mathcal{I}_{+}$
diverges $\kappa_{I}\to\infty$. In this case, Lemma~\ref{lem:seg_seg}
predicts extremely rapid convergence for all values of $c$, whereas
Theorem~\ref{thm:seg_estim} does not promise convergence at all.
\begin{figure}
\hfill{}\includegraphics[width=0.5\columnwidth]{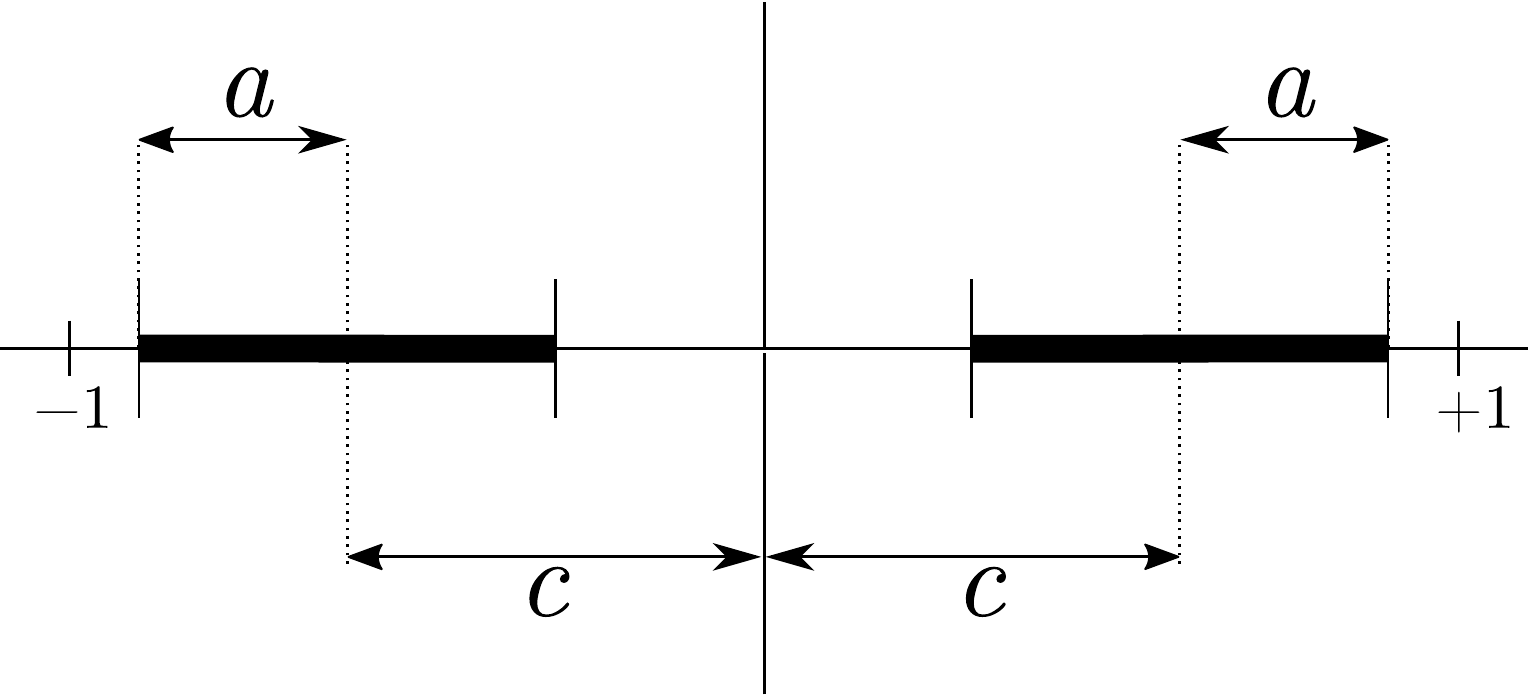}\hfill{}

\protect\caption{\label{fig:segseg}Real intervals symmetric about the imaginary axis.}
\end{figure}

To prove Lemma~\ref{lem:seg_seg}, we will begin by stating a technical
lemma.
\begin{lem}
\label{lem:ratio_monotonous}Define $f(x)=\log[(x-1)/(x+1)]$ with
domain $x\in(1,\infty)$. Then the quotient $g(x)=f(x)/f(x^{2})$
is monotonously increasing with infimum attained at the limit point
$g(1)=1$.\end{lem}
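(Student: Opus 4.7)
The plan is to verify the claim by a direct derivative computation, reducing monotonicity to a single analytic inequality, which is then proved by a power-series argument in the variable $y = 1/x$.

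First, write $f(x) = \log(x-1)-\log(x+1)$ so that $f'(x) = 2/(x^2-1)$ and $f'(x^2) = 2/(x^4-1)$. Apply the quotient rule to $g = f(x)/f(x^2)$; after clearing the common factor $2/(x^2-1) > 0$ and using $f(x) = -\log\tfrac{x+1}{x-1}$, $f(x^2) = -\log\tfrac{x^2+1}{x^2-1}$, the sign of $g'(x)$ agrees with the sign of
\[
\phi(x) := 2x\,\log\frac{x+1}{x-1} - (x^2+1)\,\log\frac{x^2+1}{x^2-1}.
\]
So it suffices to prove $\phi(x) > 0$ for all $x > 1$.

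Second, substitute $y = 1/x \in (0,1)$ and use $\log\tfrac{1+y}{1-y} = 2\operatorname{artanh}(y)$ to rewrite the target inequality as
\[
2y\operatorname{artanh}(y) \;>\; (1+y^2)\operatorname{artanh}(y^2), \qquad y \in (0,1).
\]
Expand each side in its Taylor series, $\operatorname{artanh}(t) = \sum_{k\ge 0} t^{2k+1}/(2k+1)$. Matching the coefficients of $y^{2j}$ on the two sides (odd $j = 2k+1$ versus even $j = 2k+2$) and pairing neighboring terms, one obtains the identity
\[
2y\operatorname{artanh}(y) - (1+y^2)\operatorname{artanh}(y^2)
= \sum_{k=0}^{\infty} \frac{y^{4k+2}}{(2k+1)(4k+1)(4k+3)}\Bigl[(4k+3) - (4k+1)\,y^2\Bigr].
\]
Since $(4k+3) - (4k+1)y^2 \ge 2 > 0$ on $(0,1)$, every term is strictly positive, establishing the inequality and hence $g'(x) > 0$ throughout $(1,\infty)$.

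Third, for the infimum, compute
\[
f(x^2) - f(x) = \log\frac{(x+1)^2}{x^2+1},
\]
which stays bounded (in fact $\to \log 2$) as $x \to 1^+$, while $f(x) \to -\infty$. Hence $g(x) = f(x)/(f(x) + O(1)) \to 1$, and by the strict monotonicity just proved, $\inf_{x>1} g(x) = g(1^+) = 1$. The main obstacle is identifying the pairing of the odd- and even-indexed Taylor coefficients that makes every grouped term manifestly positive; once that grouping is found the rest is bookkeeping.
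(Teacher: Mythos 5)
Your proof is correct, and it in fact supplies more detail than the paper's own argument at a place where the paper is hand-wavy. The paper also differentiates $g$ by the quotient rule, but it then asserts positivity of the numerator $f(x^{2})f'(x)-f(x)\,\tfrac{d}{dx}f(x^{2})$ merely by ``combining'' the facts $f(x)<f(x^{2})<0$ and $f'(x)>\tfrac{d}{dx}f(x^{2})>0$. Those facts alone do not imply the conclusion: with $a<b<0$ and $c>d>0$, the quantity $bc-ad$ can have either sign (e.g.\ $a=-2$, $b=-1$, $c=10$, $d=1$ gives $bc-ad=-8$), so a finer cancellation is actually being used without being named. Your route closes that gap cleanly: factoring out $2/(x^{2}-1)>0$ reduces the sign of $g'$ to the sign of $\phi(x)=2x\log\tfrac{x+1}{x-1}-(x^{2}+1)\log\tfrac{x^{2}+1}{x^{2}-1}$, the substitution $y=1/x$ turns this into $2y\,\mathrm{artanh}\,y>(1+y^{2})\,\mathrm{artanh}(y^{2})$ on $(0,1)$, and the regrouped power series
\[
\sum_{k\ge 0}\frac{y^{4k+2}}{(2k+1)(4k+1)(4k+3)}\bigl[(4k+3)-(4k+1)y^{2}\bigr]
\]
is termwise positive because $(4k+3)-(4k+1)y^{2}>2$ for $y\in(0,1)$. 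I checked the coefficient matching (splitting the $\mathrm{artanh}\,y$ series into indices $j=2k$ and $j=2k+1$ and using $\tfrac{2}{4k+1}-\tfrac{1}{2k+1}=\tfrac{1}{(4k+1)(2k+1)}$, $\tfrac{2}{4k+3}-\tfrac{1}{2k+1}=\tfrac{-1}{(4k+3)(2k+1)}$) and it is exact; the regrouping is legitimate by absolute convergence on $(0,1)$. For the infimum, your observation that $f(x^{2})-f(x)=\log\tfrac{(x+1)^{2}}{x^{2}+1}$ stays bounded while $f(x)\to-\infty$ as $x\to 1^{+}$ is a clean alternative to the paper's l'H\^opital computation and gives the same limit $g(1^{+})=1$. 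In short: same quotient-rule skeleton, but your power-series argument gives a rigorous justification of the sign of $g'$ that the paper elides.
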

\begin{proof}
By definition, we see that both $f(x)$ and $f(x^{2})$ are nonzero
for all $x>1$. Taking the derivatives 
\begin{equation}
\frac{d}{dx}\left[f(x)\right]=\frac{2}{x^{2}-1}=\frac{2x^{2}+2}{x^{4}-1},\qquad\frac{d}{dx}\left[f(x^{2})\right]=\frac{4x}{x^{4}-1},
\end{equation}
reveals that $f(x)$ is monotonously increasing for all $x>1$, so
we also have $f(x)<f(x^{2})$. Finally, we observe that $\frac{d}{dx}\left[f(x)\right]>\frac{d}{dx}\left[f(x^{2})\right]>0$
for all $x>1$. Combining these three observations with the quotient
rule reveals that $g(x)$ is monotonously increasing
\begin{equation}
\frac{d}{dx}\left[g(x)\right]=\frac{f(x^{2})\,\frac{d}{dx}\left[f(x)\right]-f(x)\,\frac{d}{dx}\left[f(x^{2})\right]}{[f(x^{2})]^{2}}>0\qquad\forall x>1.
\end{equation}
Hence, the infimum for $g(x)$ must be attain at its lower limit point
$x=1$. Using l'H\^{o}pital's rule yields $\lim_{x\to1}g(x)=\lim_{x\to1}(2x^{2}+2)/(4x)=1$.
\end{proof}

\begin{proof}[Proof of Lemma~\ref{lem:seg_seg}]
For the lower-bound, we have via Theorem~\ref{thm:seg_estim} and
Remark~\ref{rem:cheb_bounds}
\[
\min_{\begin{subarray}{c}
p\in\P_{k}\\
p(1)=1
\end{subarray}}\max_{z\in\mathcal{I}_{-}\cup\mathcal{I}_{+}}|p(z)|\ge\min_{\begin{subarray}{c}
p\in\P_{k}\\
p(1)=1
\end{subarray}}\max_{z\in\mathcal{I}_{+}}|p(z)|=\frac{1}{T_{k}(\frac{1-c}{a})}\ge\left(\frac{\sqrt{\kappa_{+}}+1}{\sqrt{\kappa_{+}}-1}\right)^{k}.
\]
For the upper-bound, consider the product of an order-$\xi$ Chebyshev
polynomial over $\mathcal{I}_{+}$ and an order-$\eta$ monomial over
$\mathcal{I}_{-}$, as in
\begin{equation}
p(z)=\left(\frac{z+c}{1+c}\right)^{\eta}\frac{T_{\xi}(\frac{z-c}{a})}{|T_{\xi}(\frac{1-c}{a})|},
\end{equation}
with infinity norms $\|p(z)\|_{\mathcal{I}_{-}}\triangleq\max_{z\in\mathcal{I}_{-}}|p(z)|$
and $\|p(z)\|_{\mathcal{I}_{+}}\triangleq\max_{z\in\mathcal{I}_{+}}|p(z)|$
attained at $z=-(c+a)$ and $z=+(c+a)$ respectively
\begin{equation}
\|p(z)\|_{\mathcal{I}_{-}}=\left(\frac{a}{1+c}\right)^{\eta}\frac{|T_{\xi}(\frac{a+2c}{a})|}{|T_{\xi}(\frac{1-c}{a})|},\qquad\|p(z)\|_{\mathcal{I}_{+}}=\left(\frac{a+2c}{1+c}\right)^{\eta}\frac{1}{|T_{\xi}(\frac{1-c}{a})|}.
\end{equation}
We choose the exponents $\eta+\xi=k$ in the ratio
\begin{align}
\eta/\xi & =\log\left(\frac{\sqrt{\nu}-1}{\sqrt{\nu}+1}\right)\bigg/\log\left(\frac{\nu-1}{\nu+1}\right),\label{eq:eta_xi}
\end{align}
in which $\nu=1+a/c$ is the condition number of the well-conditioned
interval $\mathcal{I}_{-}$ with respect to the ill-conditioned interval
$\mathcal{I}_{+}$. This particular ratio implies 
\begin{equation}
\left(\frac{\nu-1}{\nu+1}\right)^{\eta}=\left(\frac{\sqrt{\nu}-1}{\sqrt{\nu}+1}\right)^{\xi}\qquad\implies\qquad\left(\frac{a}{a+2c}\right)^{\eta}\le\frac{1}{|T_{\xi}(\frac{a+2c}{a})|},\label{eq:q_pow}
\end{equation}
via the bounds in Remark~\ref{rem:cheb_bounds}, so $\|p(z)\|_{\mathcal{I}_{+}}\ge\|p(z)\|_{\mathcal{I}_{-}}$
is satisfied by construction, and the global error bound is bound
\begin{equation}
\max_{z\in\mathcal{I}_{-}\cup\mathcal{I}_{+}}|p(z)|\le\|p(z)\|_{\mathcal{I}_{+}}\le2\left(\frac{\sqrt{\kappa_{+}}+1}{\sqrt{\kappa_{+}}-1}\right)^{\xi}.
\end{equation}
To complete the proof, we require a lower estimate of $\xi$ in terms
of $k$ that is valid for any valid of $a$ and $c$, or equivalently,
any value of $\nu$. Since $k=\eta+\xi$ by definition, the ratio
is written $\xi/k=1/(1+\eta/\xi)$, so we really desire an upper estimate
on the ratio $\eta/\xi$ defined in (\ref{eq:eta_xi}). According
to Lemma~\ref{lem:ratio_monotonous}, the quotient in (\ref{eq:eta_xi})
is monotonously increasing with respect to $\sqrt{\nu}$. Hence, the
maximum value of $\eta/\xi$ is attained at the maximum value of $\nu$.
The choice of $a=c$ maximizes $\nu$ with maximum at $v=2$, since
any choice of $a>c$ would cause $\mathcal{I}_{-}$ and $\mathcal{I}_{+}$
to overlap. Evaluating the expression at $\nu=2$ yields $\eta/\xi\le\log\left(\frac{\sqrt{2}-1}{\sqrt{2}+1}\right)\bigg/\log\left(\frac{2-1}{2+1}\right)\le2.151$.
This implies $\xi/k=1/(1+\eta/\xi)\ge1/3.151=0.317$.
\end{proof}

\subsection{Concentric disk and interval}

\begin{figure}
\hfill{}\includegraphics[width=0.5\columnwidth]{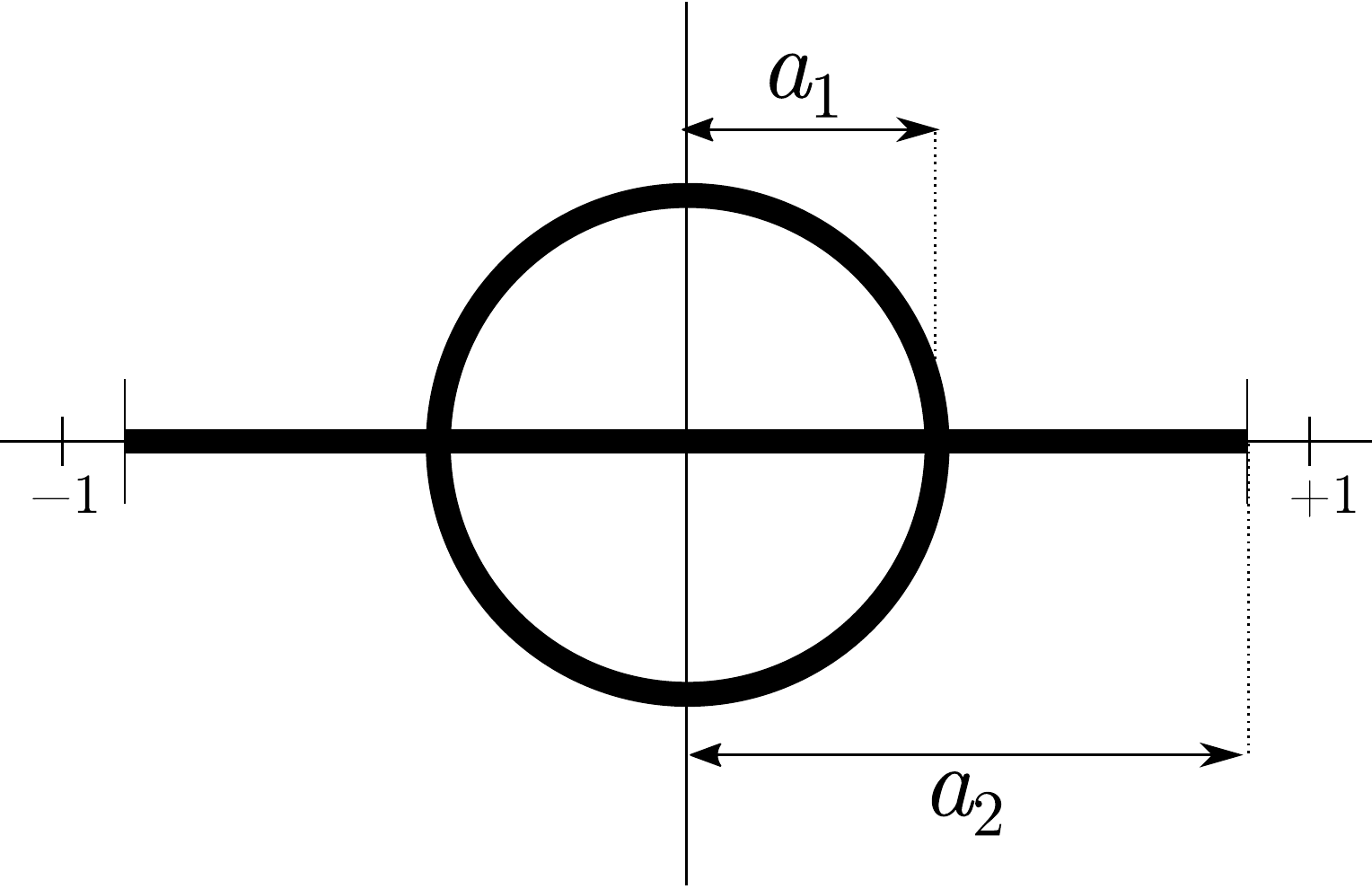}\hfill{}

\protect\caption{\label{fig:sidephi}Disk-and-interval eigenvalue distribution.}
\end{figure}
Finally, consider the polynomial approximation problem for the union
of a disk and a real interval with respect to the constraint point
$+1$, illustrated in Fig.~\ref{fig:sidephi}, which arises as the
eigenvalue distribution (\ref{eq:eig_case1}) in Lemma~\ref{lem:eig_distr}. 
\begin{lem}
\label{lem:disk_inter}Given $0\le a_{D}\le a_{I}<1$, define the
disk $\mathcal{D}=\{z\in\C:|z|\le a_{D}\}$ and the interval $\mathcal{I}=\{z\in\C:|z|\le a_{I}\}$.
Then 
\begin{equation}
\min_{\begin{subarray}{c}
p\in\P_{k}\\
p(1)=1
\end{subarray}}\max_{z\in\mathcal{D}\cup\mathcal{I}}|p(z)|\le2\left(\frac{\kappa_{I}-1}{\kappa_{I}+1}\right)^{\eta}\left(\frac{\sqrt{\kappa_{I}}-1}{\sqrt{\kappa_{I}}+1}\right)^{\xi}.\label{eq:disk_seg_optim_constr}
\end{equation}
where $\kappa_{I}=(1+a_{I})/(1-a_{I})$ is the condition number for
the interval, and $\eta+\xi=k$ are defined
\[
\eta=\left\lceil \frac{c_{0}}{\delta+c_{0}}\right\rceil ,\qquad\xi=\left\lfloor \frac{\delta}{\delta+c_{0}}\right\rfloor 
\]
where $\delta=1-a_{D}/a_{I}$ and $c_{0}=\log(1+\sqrt{2})\approx0.8814$.
\end{lem}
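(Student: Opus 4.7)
The plan is to construct a product polynomial---a monomial factor to handle the disk and a rescaled Chebyshev factor to handle the interval---and then verify via a Bernstein ellipse estimate that the two factors are well balanced against each other.

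First, I would take
\[
p(z) = z^{\eta}\, \frac{T_{\xi}(z/a_{I})}{T_{\xi}(1/a_{I})}, \qquad \deg p = \eta + \xi = k,
\]
which satisfies $p(1) = 1$ automatically. On the real interval $\mathcal{I} = [-a_{I}, +a_{I}]$, the standard Chebyshev property $|T_{\xi}(t)| \le 1$ for $t \in [-1,1]$ gives
\[
\max_{z \in \mathcal{I}} |p(z)| \le \frac{a_{I}^{\eta}}{T_{\xi}(1/a_{I})}.
\]
Combined with Remark~\ref{rem:cheb_bounds} applied to the denominator and the identity $a_{I} = (\kappa_{I}-1)/(\kappa_{I}+1)$, this already matches the right-hand side of (\ref{eq:disk_seg_optim_constr}) on $\mathcal{I}$, leading factor of $2$ included.

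The substance of the argument lies on the disk. On $\mathcal{D}$ one has $|z|^{\eta} \le a_{D}^{\eta}$, so the remaining task is to bound $|T_{\xi}(w)|$ on the complex disk $|w| \le a_{D}/a_{I} = 1 - \delta \le 1$. For this I would invoke a Bernstein ellipse argument: the closed unit disk $\{|w|\le 1\}$ is enclosed by the ellipse $E_{\rho}$ with foci $\pm 1$ and parameter $\rho = 1 + \sqrt{2}$ (the smallest $\rho$ for which the semi-minor axis $(\rho - \rho^{-1})/2$ reaches $1$), and the well-known identity $T_{\xi}\bigl((u + u^{-1})/2\bigr) = (u^{\xi}+u^{-\xi})/2$ evaluated at $|u|=\rho$ gives $|T_{\xi}(w)| \le (\rho^{\xi}+\rho^{-\xi})/2 \le \rho^{\xi} = e^{c_{0}\xi}$ on $E_{\rho}$. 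Since the disk $|w| \le 1 - \delta$ lies inside $E_{\rho}$, the maximum modulus principle yields
\[
\max_{z \in \mathcal{D}} |p(z)| \le \frac{a_{D}^{\eta}\, e^{c_{0}\xi}}{T_{\xi}(1/a_{I})}.
\]

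It remains to show that the disk bound does not exceed the interval bound, so that the full bound over $\mathcal{D} \cup \mathcal{I}$ is still controlled by the interval; equivalently, that $a_{D}^{\eta} e^{c_{0}\xi} \le a_{I}^{\eta}$. Taking logarithms and using $a_{D}/a_{I} = 1-\delta$, this reduces to $c_{0}\xi \le \eta\,[-\log(1-\delta)]$. From $\eta \ge c_{0}k/(\delta+c_{0})$ and $\xi \le \delta k/(\delta+c_{0})$, it suffices to verify the scalar inequality $\delta \le -\log(1-\delta)$, which holds for every $\delta \in [0,1)$ because $-\log(1-\delta) = \delta + \delta^{2}/2 + \delta^{3}/3 + \cdots$ is term-by-term nonnegative. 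The hard part of the proof is spotting the right product ansatz and the correct Bernstein ellipse; once those are in place, combining the two bounds is immediate, and applying Remark~\ref{rem:cheb_bounds} to fold in the factor of $2$ is routine.
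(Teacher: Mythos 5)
Your proposal is correct and follows essentially the same route as the paper: the same product ansatz $p(z)=z^{\eta}\,T_{\xi}(z/a_{I})/T_{\xi}(1/a_{I})$, the same split of the infinity norm over $\mathcal{D}$ and $\mathcal{I}$, the same balancing condition reducing to $\delta\le-\log(1-\delta)$, and the same application of Remark~\ref{rem:cheb_bounds} to fold in the factor of $2$. The only superficial difference is that you re-derive the bound $\max_{|w|\le 1}|T_{\xi}(w)|\le(1+\sqrt{2})^{\xi}$ directly from the Bernstein ellipse, whereas the paper packages that step as Proposition~\ref{prop:chebmax} and cites it.
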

It is easy to verify that $\kappa_{I}$ is also the condition number
for the union of the disk and the interval, $\mathcal{D}\cup\mathcal{I}$.
Consequently, one interpretation of Lemma~\ref{lem:disk_inter} is
that there exists an order $O(\sqrt{\kappa}\log\epsilon^{-1})$ polynomial
that approximates a $\kappa$-conditioned version of $\mathcal{D}\cup\mathcal{I}$
to $\epsilon$-accuracy, but only so long as the disk $\mathcal{D}$
is \emph{strictly better conditioned} than the interval $\mathcal{I}$.
If $a_{D}=a_{I}$, then both regions share the same condition number,
and the square-root factor speed-up is lost; an order $O(\kappa\log\epsilon^{-1})$
polynomial is now required to solve the same approximation problem.

The proof of Lemma~\ref{lem:disk_inter} requires the following estimate
on the value of the Chebyshev polynomial over the complex plane.
\begin{prop}
\label{prop:chebmax}The maximum modulus of the $k$-th order Chebyshev
polynomial is bound within the disk on the complex plane centered
at the origin with radius $\eta$, 
\begin{equation}
\max_{|z|\le\eta}|T_{k}(z)|\le T_{k}(\sqrt{1+\eta^{2}})\le\left(\eta+\sqrt{1+\eta^{2}}\right)^{k},\label{eq:chebmaxbnd}
\end{equation}
and the first inequality is tight for $k$ even.\end{prop}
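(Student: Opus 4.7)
The plan is to parametrize the complex argument via the Joukowski-type substitution $z=\cos(\alpha+i\beta)$ with $\alpha,\beta\in\R$, under which the Chebyshev polynomial acquires the classical closed form $T_k(z)=\cos(k\alpha+ik\beta)$. Separating real and imaginary parts and using the identities $\cos^2+\sin^2=1$ and $\cosh^2-\sinh^2=1$, a direct computation then yields the two compact expressions
\begin{equation}
|z|^2=\cos^2\alpha+\sinh^2\beta,\qquad |T_k(z)|^2=\cos^2(k\alpha)+\sinh^2(k\beta).
\end{equation}
This reduces the maximization over the disk $\{|z|\le\eta\}$ to an elementary constrained optimization over two real parameters.

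From here, the argument is a short chain of inequalities. First, bounding $\cos^2(k\alpha)\le 1$ uniformly gives $|T_k(z)|^2\le 1+\sinh^2(k\beta)=\cosh^2(k\beta)$. Second, the constraint $|z|\le\eta$ forces $\sinh^2\beta\le\eta^2$, i.e.\ $|\beta|\le\sinh^{-1}\eta$, so monotonicity of $\cosh$ on $[0,\infty)$ yields $|T_k(z)|\le\cosh(k\sinh^{-1}\eta)$. Third, invoking the identity $T_k(\cosh t)=\cosh(kt)$ at $t=\sinh^{-1}\eta$ (so $\cosh t=\sqrt{1+\eta^2}$) recognizes this bound as exactly $T_k(\sqrt{1+\eta^2})$, giving the first inequality in \eqref{eq:chebmaxbnd}. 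The second inequality then follows from the explicit representation
\begin{equation}
T_k(\sqrt{1+\eta^2})=\tfrac{1}{2}\bigl[(\eta+\sqrt{1+\eta^2})^k+(\eta-\sqrt{1+\eta^2}+2\sqrt{1+\eta^2})^{-k}\,\text{type term}\bigr],
\end{equation}
or more cleanly by noting $\cosh(kt)\le e^{kt}$ for $t\ge0$ together with $e^{\sinh^{-1}\eta}=\eta+\sqrt{1+\eta^2}$.

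For tightness in the even case, I would exhibit the explicit extremizer $\alpha=\pi/2$, $\beta=\sinh^{-1}\eta$. With this choice the constraint $\cos^2\alpha+\sinh^2\beta=\eta^2$ is active, and for even $k$ one has $\cos(k\pi/2)=\pm1$, so $\cos^2(k\alpha)=1$ and the inequality $|T_k(z)|^2\le 1+\sinh^2(k\beta)$ is met with equality, giving $|T_k(z)|=\cosh(k\sinh^{-1}\eta)=T_k(\sqrt{1+\eta^2})$. (For odd $k$, $\cos(k\pi/2)=0$, and the bound strictly overshoots.) I don't anticipate any real obstacle here; the entire argument is a careful bookkeeping of the two-variable trigonometric/hyperbolic identity, and the main thing to watch is that every complex $z$ is in the image of $\alpha+i\beta\mapsto\cos(\alpha+i\beta)$ so that no case is missed.
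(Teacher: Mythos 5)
Your argument is correct, and it takes a genuinely different (and more self-contained) path than the paper. The paper's proof invokes the classical fact, cited from Rivlin and Saad, that the extrema of $T_k$ on an ellipse with unit focal distance are attained at $2k$ explicitly-listed boundary points, then chooses the ellipse with semi-axes $\sqrt{1+\eta^2}$ and $\eta$ as the tightest one circumscribing the disk, and finally uses the cosh-representation $T_k(\sqrt{1+\eta^2})=\tfrac12[(\eta+\sqrt{1+\eta^2})^k+(\eta+\sqrt{1+\eta^2})^{-k}]$ for the second inequality. You instead parametrize $\C$ via $z=\cos(\alpha+i\beta)$, derive the identities $|z|^2=\cos^2\alpha+\sinh^2\beta$ and $|T_k(z)|^2=\cos^2(k\alpha)+\sinh^2(k\beta)$ from scratch, and then the whole proposition (including the second inequality via $\cosh(kt)\le e^{kt}$ and $e^{\sinh^{-1}\eta}=\eta+\sqrt{1+\eta^2}$, and the even-$k$ tightness via the extremizer $\alpha=\pi/2$, $\beta=\sinh^{-1}\eta$, i.e.\ $z=\pm i\eta$) is elementary bookkeeping. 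Both routes locate the same extremizer at $z=\pm i\eta$ where the circumscribed ellipse touches the disk. Your version buys independence from the external ellipse lemma and makes the even-parity condition for tightness visibly transparent; the paper's version is shorter because it outsources the hard part. One cosmetic note: the displayed ``explicit representation'' in your write-up is garbled ($(\eta-\sqrt{1+\eta^2}+2\sqrt{1+\eta^2})^{-k}$ and the phrase ``type term'' should just be $(\eta+\sqrt{1+\eta^2})^{-k}$); your alternative via $\cosh(kt)\le e^{kt}$ is clean and is the one to keep.
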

\begin{proof}
The maximum modulus for $T_{k}(z)$ over the ellipse with unit focal
distance and principal axis $a\ge\eta$ are attained at $2n$ points
along its boundary the points~\cite{rivlin1974chebyshev,saad1986gmres}
\begin{equation}
z_{k}=a\cos\left(\frac{k\pi}{n}\right)+j\sqrt{a^{2}-1}\sin\left(\frac{k\pi}{n}\right)\qquad k=1,\ldots,2n.
\end{equation}
The ellipse with $a=\sqrt{1+\eta^{2}}$ is the smallest to enclose
the disk of radius $\eta$, and if $k$ is even, then $z_{k/2}$ also
lies on its boundary. The second bound follows by definition
\begin{equation}
T_{k}(\sqrt{1+\eta^{2}})=\frac{1}{2}\left(\eta+\sqrt{1+\eta^{2}}\right)^{k}+\frac{1}{2}\left(\eta+\sqrt{1+\eta^{2}}\right)^{-k}\le\left(\eta+\sqrt{1+\eta^{2}}\right)^{k}.
\end{equation}

\end{proof}

\begin{proof}[Proof of Lemma~\ref{lem:disk_inter}]
Consider the product of an order-$\xi$ Chebyshev polynomial over
$\mathcal{I}_{+}$ and an order-$\eta$ monomial over $\mathcal{I}_{-}$,
as in
\begin{equation}
p(z)=\left(\frac{z/a_{D}}{1/a_{D}}\right)^{\eta}\frac{T_{\xi}(z/a_{I})}{|T_{\xi}(1/a_{I})|}=z^{\eta}\,\frac{T_{\xi}(z/a_{I})}{|T_{\xi}(1/a_{I})|},
\end{equation}
with infinity norms $\|p(z)\|_{\mathcal{D}}\triangleq\max_{z\in\mathcal{D}}|p(z)|$
and $\|p(z)\|_{\mathcal{I}}\triangleq\max_{z\in\mathcal{I}}|p(z)|$
given
\begin{equation}
\|p(z)\|_{\mathcal{D}}=\frac{a_{D}^{\eta}\,\|T_{\xi}(z/a_{I})\|_{\mathcal{D}}}{|T_{\xi}(1/a_{I})|}\le\frac{a_{D}^{\eta}(1+\sqrt{2})^{\xi}}{|T_{\xi}(1/a_{I})|},\qquad\|p(z)\|_{\mathcal{I}}=\frac{a_{I}^{\eta}}{|T_{\xi}(1/a_{I})|}.\label{eq:max_D}
\end{equation}
The bound $(1+\sqrt{2})^{\xi}\ge\max_{|z|\le1}|T_{\xi}(z)|\ge\|T_{\xi}(z/a_{I})\|_{\mathcal{D}}$
arises from $a_{D}\le a_{I}$ and Proposition~\ref{prop:chebmax}.
Choosing the exponents $\eta$ and $\xi$ to satisfy the ratio 
\begin{equation}
\eta/\xi=\frac{\log(1+\sqrt{2})}{1-a_{D}/a_{I}}\ge\frac{\log(1+\sqrt{2})}{\log(a_{I}/a_{D})}\qquad\implies\qquad\left(\frac{a_{D}}{a_{I}}\right)^{\eta}\le\frac{1}{(1+\sqrt{2})^{\xi}},
\end{equation}
satisfies $\|p(z)\|_{\mathcal{D}}\le\|p(z)\|_{\mathcal{I}}$ by construction,
so the global error is bound by $\|p(z)\|_{\mathcal{I}}$. Bounding
the term $1/|T_{\xi}(1/a_{I})|$ in $\|p(z)\|_{\mathcal{I}}$ with
Remark~\ref{rem:cheb_bounds} completes the result.
\end{proof}

\section{\label{sec:Proof-main}Proof of the Main Results}

With the eigenvalues of $K(\beta)$ characterized and the corresponding
approximation problems solved, we are now ready to prove our main
results.

\subsection{Regime of purely-real eigenvalues}

Loosely speaking, Theorem~\ref{thm:real_eig} states that for parameter
values of $\beta>\ell$ or $\beta<m$, ADMM-GMRES is guaranteed to
converge to an $\epsilon$-accurate solution in $O(\sqrt{\kappa}\log\epsilon^{-1})$
iterations. We will prove this statement by solving the $K(\beta)$
eigenvalue approximation problem associated for $\beta>\ell$ or $\beta<m$
heuristically, using Theorem~\ref{thm:seg_estim} and Lemma~\ref{lem:seg_seg},
and substituting the resulting bound into Proposition~\ref{prop:admm_gmres}.
This two-step process begins with the following bound.
\begin{lem}
\label{lem:approx_real}Let $\beta>\ell$ or $\beta<m$. Then the
eigenvalue approximation problem for $K(\beta)$ has bounds
\begin{equation}
\min_{\begin{subarray}{c}
p\in\P_{k}\\
p(1)=1
\end{subarray}}\max_{\lambda\in\Lambda\{K(\beta)\}}|p(\lambda)|\le2\left(\frac{\sqrt{2\kappa}-1}{\sqrt{2\kappa}+1}\right)^{0.317\, k}.\label{eq:preal_thm}
\end{equation}
\end{lem}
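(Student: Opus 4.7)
The plan is to split on the value of $\gamma = \max\{\beta/m,\ell/\beta\}$, which under the hypothesis $\beta>\ell$ or $\beta<m$ necessarily exceeds $\kappa$. By Lemma~\ref{lem:eig_real}, $K(\beta)$ has purely real eigenvalues, and Lemma~\ref{lem:eig_distr} then places $\Lambda\{K(\beta)\}$ either in a single symmetric interval (when $\gamma\in(\kappa,2\kappa]$) or in the two disjoint symmetric intervals of (\ref{eq:eig_case3}) (when $\gamma>2\kappa$). The third case of (\ref{eq:eig_case1}) is excluded, so no disk term needs to be handled. The strategy is to bound the resulting approximation problem in each regime by a uniform function of $\kappa$.

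In the single-interval regime, the plan is to invoke Theorem~\ref{thm:seg_estim} with $c=0$ and $a=(\gamma-1)/(\gamma+1)$. The interval condition number works out to $\kappa_I = (1+a)/(1-a) = \gamma \le 2\kappa$, so the Chebyshev estimate gives a decay of $2\bigl(\tfrac{\sqrt{2\kappa}-1}{\sqrt{2\kappa}+1}\bigr)^{k}$. This bound is strictly stronger than the claimed (\ref{eq:preal_thm}), since the exponent $1$ exceeds $0.317$, so no loss is incurred by collapsing the two regimes into a single estimate.

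In the two-interval regime, the plan is to apply Lemma~\ref{lem:seg_seg} to the enclosure (\ref{eq:eig_case3}), setting $c$ equal to the midpoint and $a$ equal to the half-width of each interval, namely
\[
c = \tfrac{1}{2}\Bigl[\tfrac{\gamma-1}{\gamma+1}+\tfrac{\gamma-2\kappa}{\gamma+\kappa}\Bigr],\qquad a = \tfrac{1}{2}\Bigl[\tfrac{\gamma-1}{\gamma+1}-\tfrac{\gamma-2\kappa}{\gamma+\kappa}\Bigr].
\]
A direct simplification then gives the effective condition number $\kappa_{+} = (1-c+a)/(1-c-a) = 3\kappa(\gamma+1)/[2(\gamma+\kappa)]$. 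The bookkeeping step of interest is to verify that this quantity is uniformly bounded by $2\kappa$ over the range $\gamma>2\kappa$; this reduces to the elementary inequality $3(\gamma+1) \le 4(\gamma+\kappa)$, which holds whenever $\kappa\ge 1$. Since $x\mapsto(\sqrt{x}-1)/(\sqrt{x}+1)$ is increasing, the upper bound in Lemma~\ref{lem:seg_seg} then yields $2\bigl(\tfrac{\sqrt{2\kappa}-1}{\sqrt{2\kappa}+1}\bigr)^{0.317\,k}$, matching the claim.

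The main obstacle is the algebra around $\kappa_+$: a careless parameterization of the two intervals in (\ref{eq:eig_case3}) would yield a condition number growing with $\gamma$, missing the fact that it is the \emph{relative} gap $(1-c)/a$ that controls convergence. Handling this correctly is what gives the uniform $2\kappa$ bound. Beyond this, the proof is a straightforward combination of the eigenvalue localization in Section~\ref{sec:Eigenvalue-Distribution} and the polynomial approximation machinery in Section~\ref{sec:approx_probs}, with no further analytical input required.
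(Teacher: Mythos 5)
Your proof is correct and follows essentially the same route as the paper: split on $\gamma=\max\{\beta/m,\ell/\beta\}>\kappa$ into the single-interval regime $\gamma\in(\kappa,2\kappa]$ handled by Theorem~\ref{thm:seg_estim}, and the two-interval regime $\gamma>2\kappa$ handled by Lemma~\ref{lem:seg_seg}, with $\kappa_I=\gamma\le 2\kappa$ in the first and $\kappa_{+}=\tfrac{3}{2}\kappa(\gamma+1)/(\gamma+\kappa)\le 2\kappa$ in the second. The only difference is that you carry out the algebra for $\kappa_{+}$ and its bound $\le 2\kappa$ explicitly, whereas the paper states them without derivation.
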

\begin{proof}
First, we consider $\beta\in[\frac{1}{2}m,m)\cup(\ell,2\ell]$. According
to (\ref{eq:eig_case2}) in Lemma~\ref{lem:eig_distr}, the eigenvalues
of $K(\beta)$ are distributed over a purely-real interval bounded
by $\|K(\beta)\|=(\gamma-1)/(\gamma+1)$, where $\gamma=\max\{\beta/m,\ell/\beta\}$
lies $\gamma\in(\kappa,2\kappa]$. The associated condition number
$\kappa_{I}=\gamma$ is bounded $\kappa<\kappa_{I}\le2\kappa$, and
applying the Chebyshev polynomial approximation in Theorem~\ref{thm:seg_estim}
yields a less conservative version of (\ref{eq:preal_thm}), i.e.
one with a larger exponent on the upper-bound.

Next, we consider the remaining choices, $\beta>2\ell$ or $\beta<\frac{1}{2}m$.
According to (\ref{eq:eig_case3}) in Lemma~\ref{lem:eig_distr},
the eigenvalues of $K(\beta)$ are clustered along two non-overlapping
intervals, symmetric about the imaginary axis. The condition number
for the interval lying in the right-half plane is $\kappa_{+}=\frac{3}{2}\kappa(\gamma+1)/(\gamma+\kappa)$
with $\gamma>2\kappa$, so its value is bound $\kappa<\kappa_{+}\le2\kappa$.
Making this substitution into the heuristic solution for the two-segment
problem in Lemma~\ref{lem:seg_seg} yields exactly (\ref{eq:preal_thm}).
\end{proof}
According to Lemma~\ref{eq:preal_thm}, solving the $K(\beta)$ eigenvalue
approximation problem (for $\beta>\ell$ or $\beta<m$) to $\epsilon$-accuracy
will require a polynomial of order $\approx2.2\sqrt{\kappa}\log\epsilon^{-1}$,
where the leading constant is $2.2\approx\sqrt{2}/(2\times0.317)$.
Assuming that the eigenvalue characterizations (\ref{eq:eig_case2})
and (\ref{eq:eig_case3}) in Lemma~\ref{lem:eig_distr} are sharp,
this figure cannot be improved by more than a small absolute constant.
This is because all other ingredients in the proof of Lemma~\ref{lem:approx_real}
have approximation constants no greater than 4.

\begin{proof}[Proof of Theorem~\ref{thm:real_eig}]
Substituting the bound on the eigenvalue approximation problem in
Lemma~\ref{lem:approx_real} and the bound on the condition number
for the matrix-of-eigenvectors in Lemma~\ref{lem:eig_real} into
Proposition~\ref{prop:admm_gmres} yields the desired statement.
\end{proof}

\subsection{Regime of complex eigenvalues}

Loosely speaking, Theorem~\ref{thm:real_eig} states that for parameter
values of $m\le\beta\le\ell$, ADMM-GMRES is guaranteed to converge
to an $\epsilon$-accurate solution in $O(\kappa^{2/3}\log\epsilon^{-1})$
iterations. We will prove this statement by solving the $K(\beta)$
eigenvalue approximation problem heuristically using Lemma~\ref{lem:disk_inter},
estimating a bound on the resulting convergence factor that is independent
of $\beta$, and substituting the bound into Proposition~\ref{prop:admm_gmres}.

To begin, Lemma~\ref{lem:eig_distr} says that for $\beta\in[m,\ell]$,
the eigenvalues of $K(\beta)$ are distributed over the union of a
disk and an interval. Lemma~\ref{lem:disk_inter} can be used to
provide a heuristic solution for this eigenvalue distribution. Substituting
the former into the latter yields
\begin{equation}
\min_{\begin{subarray}{c}
p\in\P_{k}\\
p(1)=1
\end{subarray}}\max_{\lambda\in K(\beta)}|p(\lambda)|\le2\rho^{k},\label{eq:expand_exp}
\end{equation}
where the convergence factor and associated parameters are
\begin{equation}
\rho(\gamma)=\left(\frac{\gamma-1}{\gamma+1}\right)^{\frac{c_{0}}{c_{0}+\delta}}\left(\frac{\sqrt{\gamma}-1}{\sqrt{\gamma}+1}\right)^{\frac{\delta}{c_{0}+\delta}},\qquad\delta(\gamma)=\frac{\gamma^{2}-\kappa}{(\gamma+\kappa)(\gamma+1)}\label{eq:thm_conv_fac}
\end{equation}
and $c_{0}=\log(1+\sqrt{2})$. Recall that $\gamma=\max\{\beta/m,\ell/\beta\}$. 
\begin{lem}
\label{lem:approx_cmp}Let $\sqrt{\kappa}\le\gamma\le\kappa$, and
define $\delta(\gamma)$ as a function of $\gamma$ as in (\ref{eq:thm_conv_fac}).
Then 
\begin{equation}
\left(\frac{\gamma-1}{\gamma+1}\right)^{\frac{c_{0}}{c_{0}+\delta(\gamma)}}\left(\frac{\sqrt{\gamma}-1}{\sqrt{\gamma}+1}\right)^{\frac{\delta(\gamma)}{c_{0}+\delta(\gamma)}}\le\left(1-\frac{1}{\kappa^{2/3}+1}\right)^{0.209\, k}.\label{eq:cmp_thm}
\end{equation}
\end{lem}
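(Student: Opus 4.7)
The lemma is a univariate calculus statement bounding the worst case of $\rho(\gamma)$ over $\gamma\in[\sqrt{\kappa},\kappa]$. My plan is to take logarithms, apply elementary inequalities to pass to a simpler surrogate, and then minimize that surrogate over the interval. Writing $\mu(\gamma) = -\log\rho(\gamma) = (c_0\nu_1 + \delta\nu_2)/(c_0+\delta)$ with $\nu_1 = -\log((\gamma-1)/(\gamma+1))$ and $\nu_2 = -\log((\sqrt{\gamma}-1)/(\sqrt{\gamma}+1))$, the elementary inequality $-\log(1-x)\ge x$ yields $\nu_1\ge 2/(\gamma+1)$ and $\nu_2\ge 2/(\sqrt{\gamma}+1)$, while on the other side $\log(1+x)\le x$ bounds the target exponent $0.209\log(1+\kappa^{-2/3})$ above by $0.209/\kappa^{2/3}$. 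These reductions transform the lemma into the cleaner sufficient condition
\[
H(\gamma) := \frac{2c_0/(\gamma+1) + 2\delta(\gamma)/(\sqrt{\gamma}+1)}{c_0+\delta(\gamma)} \ge \frac{0.209}{\kappa^{2/3}}, \quad \gamma\in[\sqrt{\kappa},\kappa].
\]

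Next I would locate the minimum of $H$. At the endpoints, $\delta(\sqrt{\kappa})=0$ gives $H(\sqrt{\kappa}) = 2/(\sqrt{\kappa}+1)$, and $\delta(\kappa)\approx 1/2$ gives $H(\kappa)$ of order $\kappa^{-1/2}$, both exceeding the target by a factor of roughly $\kappa^{1/6}$, so the minimum must lie in the interior. Using the asymptotic $\delta(\gamma)\approx\gamma/\kappa$ valid away from $\sqrt{\kappa}$, the two numerator terms balance when $c_0/(\gamma+1)\approx\delta/(\sqrt{\gamma}+1)$, giving $\gamma\approx(2c_0)^{2/3}\kappa^{2/3}$. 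At this critical point $\delta\approx\kappa^{-1/3}$ and a direct computation yields $H\approx 4/\kappa^{2/3}$, more than an order of magnitude above the required $0.209/\kappa^{2/3}$.

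To make this rigorous I would partition $[\sqrt{\kappa},\kappa]$ into the two subintervals $[\sqrt{\kappa},\kappa^{2/3}]$ and $[\kappa^{2/3},\kappa]$ and lower-bound $H$ separately on each. On the first, the $2c_0/(\gamma+1)$ term dominates the numerator and the denominator $c_0+\delta$ is bounded above by the absolute constant $c_0+1/2$, giving $H\ge 2c_0/((c_0+1/2)(\kappa^{2/3}+1))$. On the second, the $2\delta/(\sqrt{\gamma}+1)$ term dominates, and the explicit lower bound $\delta(\gamma)\ge(\gamma^2-\kappa)/(2\kappa(\gamma+1))$ together with $\gamma\ge\kappa^{2/3}$ yields the matching $\kappa^{-2/3}$ rate with ample constant. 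The main obstacle is the algebraic bookkeeping in the second case, where $\delta$ is a nontrivial rational function whose leading behavior must be extracted from the denominators $(\gamma+\kappa)(\gamma+1)$; fortunately the target constant $0.209$ leaves substantial slack (the actual minimum being closer to $4/\kappa^{2/3}$), so even a crude partitioning and bounding scheme suffices.
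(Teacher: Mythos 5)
Your overall plan is sound and hits the same structural observation as the paper: lower-bound $-\log\rho$ by the surrogate $H(\gamma)=\bigl(2c_0/(\gamma+1)+2\delta/(\sqrt\gamma+1)\bigr)/(c_0+\delta)$ via $-\log(1-x)\ge x$, then reduce the target via $\log(1+x)\le x$, then show $H\ge 0.209\,\kappa^{-2/3}$. The paper proves the last step by substituting $\gamma=\kappa^\alpha$ with $\alpha\in[\tfrac12,1]$, clearing denominators, and invoking the key observation that $\max\{\kappa^{0.5\alpha-1},\kappa^{-\alpha}\}\ge\kappa^{-2/3}$ (the exponents cross at $\alpha=2/3$); you instead split $[\sqrt\kappa,\kappa]$ at $\gamma=\kappa^{2/3}$ and bound $H$ separately on each half. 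These are functionally the same idea, and the first-half argument you give (bound $2c_0/(\gamma+1)$ below, drop the second numerator term, and use $c_0+\delta\le c_0+\tfrac12$) is correct and clean.

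However, there is a genuine gap on the second half. There, you drop the $2c_0/(\gamma+1)$ term and argue that the $2\delta/(\sqrt\gamma+1)$ term alone, together with coarse bounds on the rational function $\delta$, carries the constant. This is not true near the left endpoint $\gamma=\kappa^{2/3}$ of the second interval, where $\delta$ is still small and the first term is in fact \emph{comparable to or larger than} the second. For instance at $\kappa=4$, $\gamma=\kappa^{2/3}\approx 2.52$, one has $\delta\approx 0.102$, so $\frac{2\delta}{(\sqrt\gamma+1)(c_0+\delta)}\approx 0.080$, which already falls short of $0.209\,\kappa^{-2/3}\approx 0.083$ even \emph{without} the additional loss from replacing $c_0+\delta$ by $c_0+\tfrac12$; and at $\kappa=1$ the second term vanishes outright ($\delta(\kappa^{2/3})=0$). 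The asymptotic slack you invoke ($H_{\min}\approx 4\kappa^{-2/3}$ in the large-$\kappa$ limit) does not survive the crude chain $\gamma+\kappa\le 2\kappa$, $\gamma+1\le 2\gamma$, $\sqrt\gamma+1\le 2\sqrt\gamma$, which loses roughly a factor of ten precisely at the crossover point. To repair the argument you must either retain both numerator terms on both subintervals (which then requires a two-term lower bound exactly like the paper's), or shift the partition point to $\gamma_0=c\,\kappa^{2/3}$ with $c\approx 3$, which pushes $\delta$ up enough that the second term alone clears $0.209\,\kappa^{-2/3}$ on $[\gamma_0,\kappa]$ and the first term alone clears it on $[\sqrt\kappa,\gamma_0]$. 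Either fix is routine, but as written the ``crude partitioning suffices'' claim is false, which is precisely the kind of thing the paper's $\kappa^\alpha$ substitution and $\max$-of-exponents step are engineered to avoid. As a small additional note, the displayed inequality in the lemma statement has a typo (the exponent $k$ on the right is vacuous since the left side is already $\rho$ rather than $\rho^k$); you read it correctly as a bound on $\rho$ alone.
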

\begin{proof}
We will attempt to lower-bound the logarithm of (\ref{eq:cmp_thm})
by applying $\log(1+x)\le x$, as in
\begin{equation}
-\frac{1}{2}\log\rho\ge\frac{c_{0}}{c_{0}+\delta(\gamma)}\left(\frac{1}{\gamma+1}\right)+\frac{\delta(\gamma)}{c_{0}+\delta(\gamma)}\left(\frac{1}{\sqrt{\gamma}+1}\right).\label{eq:proof_thm_log_conv_fac}
\end{equation}
Directly substituting $\delta(\gamma)$ from (\ref{eq:thm_conv_fac})
into (\ref{eq:proof_thm_log_conv_fac}) and sweeping $\gamma=\kappa^{\alpha}$
over $\alpha\in[0.5,1]$ yields
\begin{align}
\text{(\ref{eq:proof_thm_log_conv_fac})}= & \frac{c_{0}^{-1}\kappa^{2\alpha}+\kappa^{1.5\alpha}+\kappa^{\alpha}+\kappa^{0.5\alpha+1}-(c_{0}^{-1}-1)\kappa}{\left(\kappa^{0.5\alpha}+1\right)\left((c_{0}^{-1}+1)\kappa^{2\alpha}+\kappa^{\alpha}+\kappa^{\alpha+1}-(c_{0}^{-1}-1)\kappa\right)}\label{eq:prf_thm_step1}\\
= & \frac{c_{0}^{-1}\kappa^{0.5\alpha-1}+\kappa^{-1}+\kappa^{-0.5\alpha-1}+\kappa^{-\alpha}-(c_{0}^{-1}-1)\kappa^{-1.5\alpha}}{\left(1+\kappa^{-0.5\alpha}\right)\left((c_{0}^{-1}+1)\kappa^{\alpha-1}+\kappa^{-1}+1-(c_{0}^{-1}-1)\kappa^{-\alpha}\right)}\label{eq:prf_thm_step2}\\
\ge & \frac{c_{0}^{-1}\kappa^{0.5\alpha-1}+0+0+\kappa^{-\alpha}-(c_{0}^{-1}-1)\kappa^{-1.5\alpha}+0}{\left(1+1\right)\left((c_{0}^{-1}+1)+1+1\right)}\label{eq:proof_thm_ineq}\\
= & \frac{c_{0}^{-1}\kappa^{0.5\alpha-1}+\kappa^{-\alpha}-(c_{0}^{-1}-1)\kappa^{-1.5\alpha}}{2(c_{0}^{-1}+3)}\label{eq:prf_thm_step4}
\end{align}
where (\ref{eq:prf_thm_step1})$\Rightarrow$(\ref{eq:prf_thm_step2})
divides the numerator and denominator by $\kappa^{1.5\alpha+1}$,
and (\ref{eq:prf_thm_step2})$\Rightarrow$(\ref{eq:proof_thm_ineq})
uses the fact that $\kappa\ge1$ and that $(c_{0}^{-1}-1)>0$. Finally,
employing the bounds
\begin{align*}
c_{0}^{-1}\kappa^{0.5\alpha-1}+\kappa^{-\alpha} & \ge\min\{c_{0}^{-1},1\}\max\{\kappa^{0.5\alpha-1},\kappa^{-\alpha}\}\ge\kappa^{-2/3}\quad\forall\alpha>0,\\
(c_{0}^{-1}-1)\kappa^{-1.5\alpha} & \le(c_{0}^{-1}-1)\kappa^{-0.75}\le(c_{0}^{-1}-1)\kappa^{-2/3}\quad\forall\alpha\in[0.5,1],
\end{align*}
simplifies (\ref{eq:prf_thm_step4}) enough for an exact number
\[
-\log\rho\ge2\,\frac{2-c_{0}^{-1}}{2(c_{0}^{-1}+3)}\kappa^{-2/3}\ge0.209\kappa^{-2/3}.
\]
Finally, we translate the lower-bound $-\log\rho\ge c\kappa^{-\alpha}$
into the upper-bound $\rho\le(1-(\kappa^{\alpha}+1)^{-1})^{c}$ using
the fact that $\log(1+x)\ge x(1+x)^{-1}$, thereby completing the
proof.
\end{proof}

\begin{proof}[Proof of Theorem~\ref{thm:cmp_eig}]
Substituting the convergence factor bound in Lemma~\ref{lem:approx_cmp}
into (\ref{eq:expand_exp})-(\ref{eq:thm_conv_fac}) yields a bound
to the eigenvalue approximation problem
\begin{equation}
\min_{\begin{subarray}{c}
p\in\P_{k}\\
p(1)=1
\end{subarray}}\max_{\lambda\in K(\beta)}|p(\lambda)|\le2\left(1-\frac{1}{\kappa^{2/3}+1}\right)^{0.209\, k}\label{eq:cmp_bnd}
\end{equation}
for any $m\le\beta\le\ell$. Substituting the bound (\ref{eq:cmp_bnd})
into Proposition~\ref{prop:admm_gmres} proves the desired statement.
\end{proof}

\section{\label{sec:AppEx}Numerical Examples}

Finally, we benchmark the performance of ADMM-GMRES numerically. Two
classes of problems are considered: (1) random problems generated
by selecting random orthonormal bases and singular values; and (2)
the Newton direction subproblems associated with the interior-point
solution of large-scale semidefinite programs. 

In each case, the parameter value $\beta$ used in ADMM-GMRES is randomly
selected from the log-uniform distribution scaled to span four orders
of magnitude, from $10^{-2}$ to $10^{2}$. More precisely, let $Y$
be a random variable uniformly distributed in $[-1,+1]$; then for
each subproblem solved, we randomly select $\beta$ from the distribution
$10^{2Y}$. These results are benchmarked against regular ADMM with
the optimal parameter value of $\beta=\sqrt{m\ell}$.

Overall, the numerical results validate our conclusions. We find that
ADMM-GMRES converges to an $\epsilon$-accurate solution of a $\kappa$-conditioned
problem within $O(\sqrt{\kappa}\log\epsilon^{-1})$ iterations. This
is a slightly stronger finding than our theoretical predictions, which
only promised convergence in $O(\kappa^{2/3}\log\epsilon^{-1})$ iterations.

\subsection{Random problems}

\begin{figure}
\hfill{}\includegraphics[width=0.6\columnwidth]{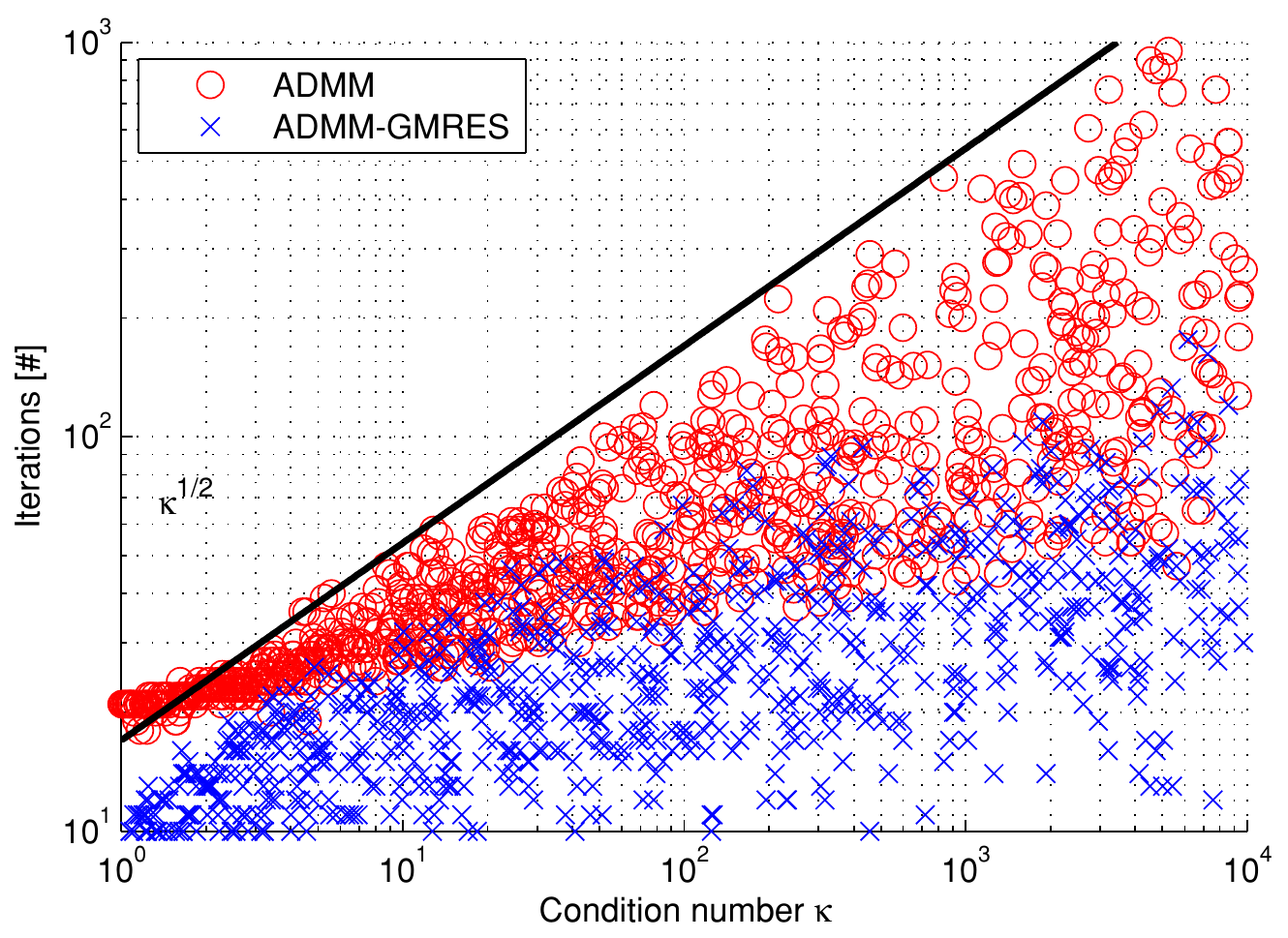}\hfill{}

\protect\caption{\label{fig:rand_prob}Number of iterations to solve 1000 randomly
generated problems to $\epsilon=10^{-6}$ accuracy using ADMM with
$\beta=\sqrt{m\ell}$ (circles) and GMRES-accelerated ADMM with randomly
selected $\beta$ (crosses). The solid line is $17\sqrt{\kappa}$.
Both methods converge in $O(\sqrt{\kappa})$ iterations. The problems
have random dimensions $1\le\protect\nx\le1000$, $1\le\protect\ny\le\protect\nx$,
$1\le\protect\nz\le\protect\ny$. }
\end{figure}
First, we compare the performance of ADMM and GMRES-accelerated ADMM
in the solution of random problems generated via the following procedure
taken from~\cite{zhang2016on}.

\begin{constr}\label{constr:log-normal}Begin with nonzero positive
integer parameters $\nx$, $\ny\le\nx$, $\nz\le\ny$ and positive
real parameter $s$. 
\begin{enumerate}
\item Select the orthogonal matrices $U_{A},U_{B}\in\R^{\ny\times\ny}$,
$V_{A},U_{D}\in\R^{\nx\times\nx}$, $V_{B}\in\R^{\ny\times\nz}$ i.i.d.
uniformly from their respective orthogonal groups. 
\item Select the positive scalars $\sigma_{A}^{(1)},\ldots,\sigma_{A}^{(\ny)}$,
$\sigma_{B}^{(1)},\ldots,\sigma_{B}^{(\nz)}$, and $\sigma_{D}^{(1)},\ldots,\sigma_{D}^{(\nx)}$
i.i.d. from the log-normal distribution $\sim\mathrm{exp}(0,s^{2})$. 
\item Output the matrices $A=U_{A}\mathrm{diag}(\sigma_{A}^{(1)},\ldots,\sigma_{A}^{(\ny)})V_{A}^{T}$,
$B=U_{B}\mathrm{diag}(\sigma_{B}^{(1)},\ldots,\sigma_{B}^{(\ny)})V_{B}^{T}$,
and $D=U_{D}\mathrm{diag}(\sigma_{D}^{(1)},\ldots,\sigma_{D}^{(\ny)})U_{D}^{T}$.
\end{enumerate}
\end{constr}

The dimension parameters $\nx$, $\ny$, $\nz$ are uniformly sampled
from $\nx\in\{1,\ldots,1000\}$, $\ny\in\{1,\ldots,\nx\}$, and $\nz\in\{1,\ldots,\nz\}$,
and the log-standard-deviation uniformly swept within the range $s\in[0,1]$,
in order to produce a range of condition numbers spanning $1\le\kappa\le10^{4}$.
Note that by construction, the optimal parameter choice $\sqrt{m\ell}$
has an expected value of 1.

Figure~\ref{fig:rand_prob} plots the number of iterations to converge
to $\epsilon=10^{-6}$ for each method and over each problem. We see
that both ADMM and ADMM-GMRES converges in $O(\sqrt{\kappa})$ iterations,
with ADMM-GMRES typically converging in slightly fewer iterations
than ADMM. The difference, of course, is that the feat is achieved
by ADMM-GMRES without needing to estimate the values of $m$ and $\ell$.

Note that the ADMM-GMRES curve bends downwards with increasing $\kappa$.
This is an artifact of the distribution of $\beta$ becoming optimal
with increasing $\kappa$. As we noted in the proof of our main results,
the convergence of ADMM-GMRES is entirely driven by an indirect, rescaled
quantity $\gamma=\max\{\ell/\beta,\beta/m\}$. When $\ell$ and $m$
are increased and decreased at the same uniform rate, the distribution
for $\gamma$ and $\beta$ become concentrated about $\gamma=\sqrt{\kappa}$
and $\beta=\sqrt{m\ell}$ respectively. These choices of $\gamma$
and $\beta$ often allow ADMM-GMRES to converge in $O(\kappa^{\frac{1}{4}}\log\epsilon^{-1})$
iterations~\cite{zhang2016on}.

\subsection{Interior-point Newton Direction for SDPs}

\begin{figure}
\hfill{}\includegraphics[width=0.6\columnwidth]{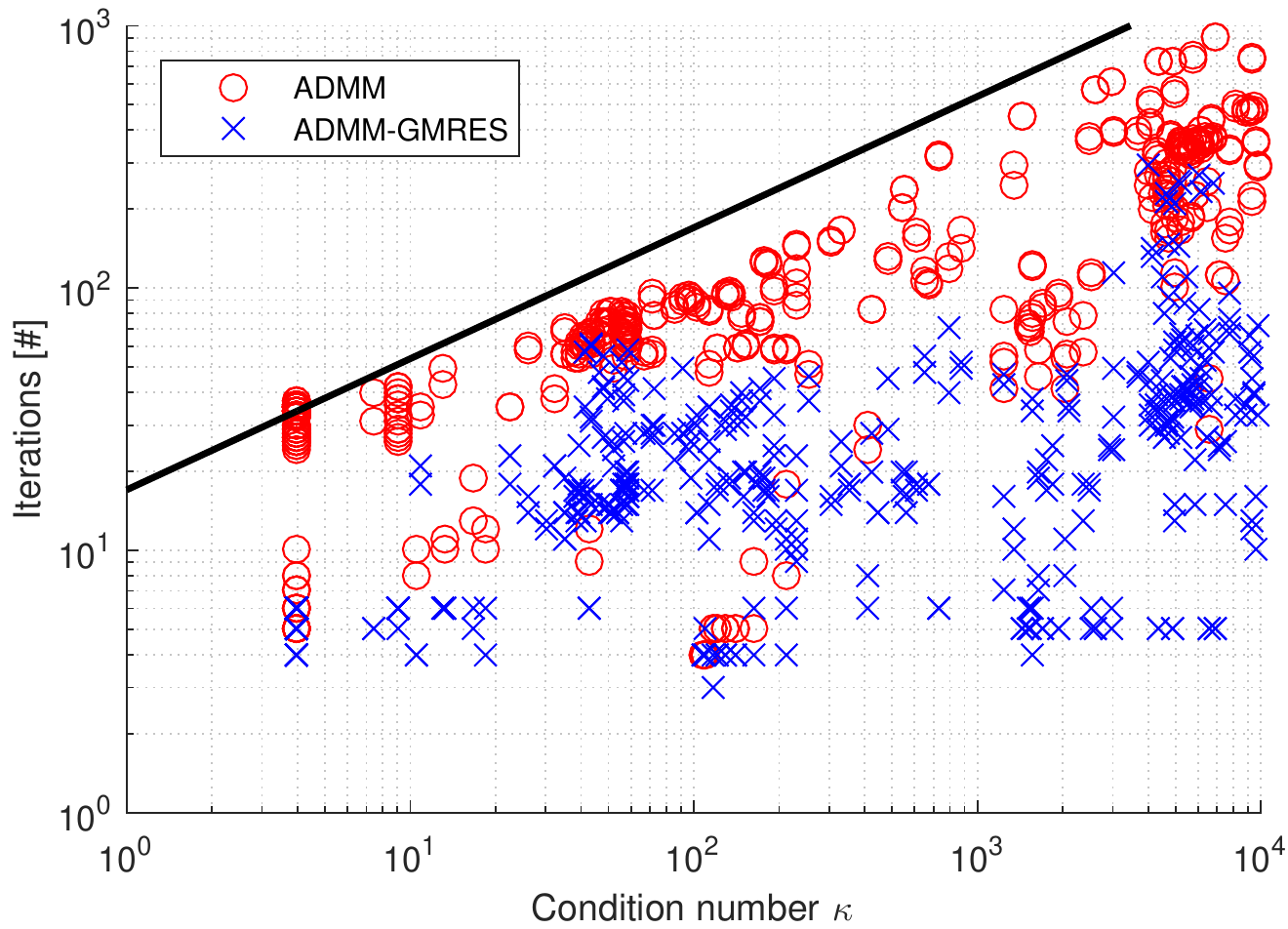}\hfill{}

\protect\caption{\label{fig:aggregate}Number of iterations to solve 508 Newton direction
subproblems with $\kappa\le10^{4}$ to $\epsilon=10^{-6}$ accuracy.
The solid line plots $k=17\sqrt{\kappa}$. }
\end{figure}
Next, we compare the performance of ADMM and GMRES-accelerated ADMM
in their ability to recompute the Newton steps as generated by SeDuMi~\cite{sturm1999using}
over 80 semidefinite programs (SDPs) in the SDPLIB suite~\cite{borchers1999sdplib}.
The experimental set-up is similar to that described in~\cite{zhang2016on}:
the 80 problems from the SDPLIB suite with less than 700 constraints
are pre-solved using SeDuMi, and the predictor and corrector Newton
step problems at each interior-point step are exported, each of the
form 
\begin{equation}
\begin{bmatrix}D & 0 & I\\
0 & 0 & B^{T}\\
I & B & 0
\end{bmatrix}\begin{bmatrix}\Delta x\\
\Delta z\\
\Delta y
\end{bmatrix}=\begin{bmatrix}r_{x}\\
r_{z}\\
r_{y}
\end{bmatrix}.
\end{equation}
Clearly, this is the KKT system for the prototype ADMM problem (\ref{eq:consen2})
with substitutions $f(x)=\frac{1}{2}x^{T}Dx-r_{x}^{T}x$, $g(z)=-r_{z}^{T}z$,
$A=I$, $c=r_{y}$, so both ADMM and ADMM-GMRES can be used to recompute
the solution $[\Delta x;\Delta z;\Delta y]$. The associated matrix-vector
products should be implicitly performed in order for either methods
to be efficient (cf.~\cite[Sec. 8]{zhang2016on}), but this is an
implementation detail that does not affect the iterates generated. 

Figure~\ref{fig:aggregate} shows the number of iterations to converge
to $\epsilon=10^{-6}$ over all 508 Newton direction subproblems with
$\kappa\le10^{4}$. Again, both ADMM and ADMM-GMRES required $O(\sqrt{\kappa})$
iterations, with the latter achieving the feat without needing to
estimate the values of $m$ and $\ell$. In fact, ADMM-GMRES converged
in fewer iterations for all of the problems considered.

\appendix
\bibliographystyle{siam}
\bibliography{refs}

\begin{thebibliography}{10}

\bibitem{bai2003hermitian}
{\sc Z.-Z. Bai, G.~H. Golub, and M.~K. Ng}, {\em Hermitian and skew-hermitian
  splitting methods for non-hermitian positive definite linear systems}, SIAM
  Journal on Matrix Analysis and Applications, 24 (2003), pp.~603--626.

\bibitem{battermann1998preconditioners}
{\sc A.~Battermann and M.~Heinkenschloss}, {\em Preconditioners for
  karush-kuhn-tucker matrices arising in the optimal control of distributed
  systems}, in Control and Estimation of Distributed Parameter Systems,
  Springer, 1998, pp.~15--32.

\bibitem{benzi2004preconditioner}
{\sc M.~Benzi and G.~H. Golub}, {\em A preconditioner for generalized saddle
  point problems}, SIAM Journal on Matrix Analysis and Applications, 26 (2004),
  pp.~20--41.

\bibitem{benzi2005numerical}
{\sc M.~Benzi, G.~H. Golub, and J.~Liesen}, {\em Numerical solution of saddle
  point problems}, Acta numerica, 14 (2005), pp.~1--137.

\bibitem{benzi2006eigenvalues}
{\sc M.~Benzi and V.~Simoncini}, {\em On the eigenvalues of a class of saddle
  point matrices}, Numerische Mathematik, 103 (2006), pp.~173--196.

\bibitem{borchers1999sdplib}
{\sc B.~Borchers}, {\em Sdplib 1.2, a library of semidefinite programming test
  problems}, Optimization Methods and Software, 11 (1999), pp.~683--690.

\bibitem{boyd2011distributed}
{\sc S.~Boyd, N.~Parikh, E.~Chu, B.~Peleato, and J.~Eckstein}, {\em Distributed
  optimization and statistical learning via the alternating direction method of
  multipliers}, Foundations and Trends{\textregistered} in Machine Learning, 3
  (2011), pp.~1--122.

\bibitem{bramble1997analysis}
{\sc J.~H. Bramble, J.~E. Pasciak, and A.~T. Vassilev}, {\em Analysis of the
  inexact uzawa algorithm for saddle point problems}, SIAM Journal on Numerical
  Analysis, 34 (1997), pp.~1072--1092.

\bibitem{brown1994convergence}
{\sc P.~N. Brown and Y.~Saad}, {\em Convergence theory of nonlinear
  newton-krylov algorithms}, SIAM Journal on Optimization, 4 (1994),
  pp.~297--330.

\bibitem{elman1994inexact}
{\sc H.~C. Elman and G.~H. Golub}, {\em Inexact and preconditioned uzawa
  algorithms for saddle point problems}, SIAM Journal on Numerical Analysis, 31
  (1994), pp.~1645--1661.

\bibitem{fang2009two}
{\sc H.-r. Fang and Y.~Saad}, {\em Two classes of multisecant methods for
  nonlinear acceleration}, Numerical Linear Algebra with Applications, 16
  (2009), pp.~197--221.

\bibitem{feingold1962block}
{\sc D.~G. Feingold, R.~S. Varga, et~al.}, {\em Block diagonally dominant
  matrices and generalizations of the gerschgorin circle theorem}, Pacific J.
  Math, 12 (1962), pp.~1241--1250.

\bibitem{francca2015explicit}
{\sc G.~Fran{\c{c}}a and J.~Bento}, {\em An explicit rate bound for the
  over-relaxed admm}, arXiv preprint arXiv:1512.02063,  (2015).

\bibitem{ghadimi2015optimal}
{\sc E.~Ghadimi, A.~Teixeira, I.~Shames, and M.~Johansson}, {\em Optimal
  parameter selection for the alternating direction method of multipliers
  (admm): quadratic problems}, Automatic Control, IEEE Transactions on, 60
  (2015), pp.~644--658.

\bibitem{giselsson2014diagonal}
{\sc P.~Giselsson and S.~Boyd}, {\em Diagonal scaling in douglas-rachford
  splitting and admm}, in Decision and Control (CDC), 2014 IEEE 53rd Annual
  Conference on, IEEE, 2014, pp.~5033--5039.

\bibitem{greenbaum1997iterative}
{\sc A.~Greenbaum}, {\em Iterative methods for solving linear systems},
  vol.~17, Siam, 1997.

\bibitem{he2000alternating}
{\sc B.~He, H.~Yang, and S.~Wang}, {\em Alternating direction method with
  self-adaptive penalty parameters for monotone variational inequalities},
  Journal of Optimization Theory and applications, 106 (2000), pp.~337--356.

\bibitem{nesterov2004introductory}
{\sc Y.~Nesterov}, {\em Introductory lectures on convex optimization}, vol.~87,
  Springer Science \& Business Media, 2004.

\bibitem{nishihara2015general}
{\sc R.~Nishihara, L.~Lessard, B.~Recht, A.~Packard, and M.~I. Jordan}, {\em A
  general analysis of the convergence of admm}, arXiv preprint
  arXiv:1502.02009,  (2015).

\bibitem{oliveira2005new}
{\sc A.~R. Oliveira and D.~C. Sorensen}, {\em A new class of preconditioners
  for large-scale linear systems from interior point methods for linear
  programming}, Linear Algebra and its applications, 394 (2005), pp.~1--24.

\bibitem{rivlin1974chebyshev}
{\sc T.~J. Rivlin}, {\em The Chebyshev Polynomials: From Approximation Theory
  to Algebra and Number Theory}, John Wiley \& Sons, 1974.

\bibitem{saad1993flexible}
{\sc Y.~Saad}, {\em A flexible inner-outer preconditioned gmres algorithm},
  SIAM Journal on Scientific Computing, 14 (1993), pp.~461--469.

\bibitem{saad2003iterative}
{\sc Y.~Saad}, {\em Iterative methods for sparse linear systems}, Siam, 2003.

\bibitem{saad1986gmres}
{\sc Y.~Saad and M.~H. Schultz}, {\em Gmres: A generalized minimal residual
  algorithm for solving nonsymmetric linear systems}, SIAM Journal on
  scientific and statistical computing, 7 (1986), pp.~856--869.

\bibitem{spielman2014nearly}
{\sc D.~A. Spielman and S.-H. Teng}, {\em Nearly linear time algorithms for
  preconditioning and solving symmetric, diagonally dominant linear systems},
  SIAM Journal on Matrix Analysis and Applications, 35 (2014), pp.~835--885.

\bibitem{stuben2001review}
{\sc K.~St{\"u}ben}, {\em A review of algebraic multigrid}, Journal of
  Computational and Applied Mathematics, 128 (2001), pp.~281--309.

\bibitem{sturm1999using}
{\sc J.~F. Sturm}, {\em Using sedumi 1.02, a matlab toolbox for optimization
  over symmetric cones}, Optimization methods and software, 11 (1999),
  pp.~625--653.

\bibitem{toh2004solving}
{\sc K.-C. Toh}, {\em Solving large scale semidefinite programs via an
  iterative solver on the augmented systems}, SIAM Journal on Optimization, 14
  (2004), pp.~670--698.

\bibitem{trottenberg2000multigrid}
{\sc U.~Trottenberg, C.~W. Oosterlee, and A.~Schuller}, {\em Multigrid},
  Academic press, 2000.

\bibitem{vanderbei1995symmetric}
{\sc R.~J. Vanderbei}, {\em Symmetric quasidefinite matrices}, SIAM Journal on
  Optimization, 5 (1995), pp.~100--113.

\bibitem{vanderbei1993symmetric}
{\sc R.~J. Vanderbei and T.~J. Carpenter}, {\em Symmetric indefinite systems
  for interior point methods}, Mathematical Programming, 58 (1993), pp.~1--32.

\bibitem{vishnoi2012laplacian}
{\sc N.~K. Vishnoi}, {\em Laplacian solvers and their algorithmic
  applications}, Theoretical Computer Science, 8 (2012), pp.~1--141.

\bibitem{wang2001decomposition}
{\sc S.~Wang and L.~Liao}, {\em Decomposition method with a variable parameter
  for a class of monotone variational inequality problems}, Journal of
  optimization theory and applications, 109 (2001), pp.~415--429.

\bibitem{zhang2016on}
{\sc R.~Y. Zhang and J.~K. White}, {\em On the convergence of
  {GMRES}-accelerated {ADMM} in {$O(\kappa^{1/4}\log\epsilon^{-1})$} iterations
  for quadratic objectives}, arXiv preprint arXiv:1601.06200v3,  (2016).

\end{thebibliography}

\end{document}